\definecolor{blueFont}{rgb}{0,0,1}
\def\dom{\mathop{\rm dom}}
\def\R{\mathbb{R}}
\def\epi{\mathop{\mathrm{epi}}}
\def\dist{\mathop{\mathrm{dist}}}
\def\Alg{{\rm proxCG}_{\mathds{1}\ell}^{\rm pen}}
\newif\ifistoreview
\newcommand{\addColor}{\textcolor{blue}}
\newcommand{\add}[1]{\ifistoreview\addColor{#1}\else #1\fi}
\crefname{hypothesis}{Hypothesis}{Hypotheses}
\newtheorem{example}{Example}[section]
\newtheorem{assumption}{Assumption}[section]
\title{A single-loop proximal-conditional-gradient penalty method\thanks{Submitted to the editors \today.
\funding{Ting Kei Pong is supported partly by the Hong Kong Research Grants Council PolyU153004/23p. Liaoyuan Zeng was supported partly by the National Natural Science Foundation of China 12201389.
}}}
\author{Hao Zhang\thanks{The Hong Kong Polytechnic University, Hong Kong, People's Republic of China
  (\email{haaoo.zhang@connect.polyu.hk}).}
\and Liaoyuan Zeng\thanks{Zhejiang University of Technology, Hangzhou, People's Republic of China (\email{zengly@zjut.edu.cn}).}
\and   Ting Kei Pong\thanks{The Hong Kong Polytechnic University, Hong Kong, People's Republic of China
  (\email{tk.pong@polyu.edu.hk}).}
}
\def\argmin{\mathop{\rm arg\,min}}
\def\Argmin{\mathop{\rm Arg\,min}}
\numberwithin{equation}{section}
\definecolor{blue}{rgb}{0,0,0}
\begin{document}

\maketitle

\begin{abstract}
We consider the problem of minimizing a convex separable objective (as a separable sum of two proper closed convex functions $f$ and $g$) over a linear coupling constraint. We assume that $f$ can be decomposed as the sum of a smooth part having H\"older continuous gradient (with exponent $\mu\in (0,1]$) and a nonsmooth part that admits efficient proximal mapping computations, while $g$ can be decomposed as the sum of a smooth part having H\"older continuous gradient (with exponent $\nu\in (0,1]$) and a nonsmooth part that admits efficient linear oracles. Motivated by the recent \add{works \cite{argyriou2014hybrid,yurtsever2018conditional}}, we propose a \emph{single-loop} variant of the standard penalty method, which we call a single-loop proximal-conditional-gradient penalty method ($\Alg$), for this problem. In each iteration of $\Alg$, we successively perform one proximal-gradient step involving $f$ and one conditional-gradient step involving $g$ on the quadratic penalty function, followed by an update of the penalty parameter. We present explicit rules for updating the penalty parameter and the stepsize in the conditional-gradient step in each iteration. Under a standard constraint qualification and domain boundedness assumption, we show that the objective value deviations (from the optimal value) along the sequence generated decay in the order of $t^{-\min\{\mu,\nu,1/2\}}$ with the associated feasibility violations decaying in the order of $t^{-1/2}$. Moreover, if the nonsmooth parts are indicator functions and the extended objective (i.e., the sum of the convex separable objective and the indicator function of the linear constraint) is a Kurdyka-{\L}ojasiewicz function with exponent $\alpha\in [0,1)$, then the distances to the optimal solution set along the sequence generated by $\Alg$ decay asymptotically at a rate of $t^{-(1-\alpha)\min\{\mu,\nu,1/2\}}$. Finally, we illustrate numerically the behavior of  $\Alg$ on \add{solving low rank Hankel matrix completion problems}.
\end{abstract}

\begin{keywords}
Iteration complexity, Kurdyka-{\L}ojasiewicz property, linear oracles, penalty methods, proximal mapping
\end{keywords}

\begin{MSCcodes}
68W40 , 90C25, 90C60, 90C90
\end{MSCcodes}

\section{Introduction}
In this paper, we consider the following optimization problem with a convex separable objective and linear coupling constraint:
\begin{equation}
    \begin{array}{cl} \label{problem}
        \min\limits_{x\in \mathcal{E}_1, y\in \mathcal{E}_2}    &f(x)+ g(y) \\
        {\rm{s.t.}} &Ax+By=c,
        \end{array}
\end{equation}
where ${\cal E}$, ${\cal E}_1$ and ${\cal E}_2$ are finite dimensional Hilbert spaces, $c\in {\cal E}$, $A: \mathcal{E}_1 \rightarrow \mathcal{E}$ and $B: \mathcal{E}_2 \rightarrow \mathcal{E}$ are linear maps,
$f: \mathcal{E}_1 \rightarrow (-\infty, \infty]$ and
$g: \mathcal{E}_2 \rightarrow (-\infty, \infty]$ are proper closed convex functions; we also assume that the solution set of \eqref{problem} is nonempty.\footnote{Please refer to section~\ref{section_notation and preliminaries} for notation.} Model problems of this form naturally arise in applications such as data science, machine learning and statistics (see, e.g., \cite{zhu2017augmented,gu2018admm,wang2019global,boyd2011distributed,martins2011augmented,lin2013design}).

For many practical instances of \eqref{problem}, the design of efficient algorithms relies heavily on the efficiency of proximal mapping computations. In particular, when $f$ and $g$ in \eqref{problem} can be decomposed as the sum of a smooth part having Lipschitz continuous gradient and a nonsmooth part that admits efficient proximal mapping computations, algorithms such as the alternating direction method of multipliers (ADMM) and its variants can be suitably applied to solve \eqref{problem} (see, e.g., \cite{douglas1956numerical,glowinski1975approximation,fazel2013hankel,wang2014bregman,li2016majorized,li2016schur,sun2015convergent,eckstein1992douglas}), where each iteration involves two subproblems related to proximal mapping computations. Here, we say that a proper closed convex function $h:{\cal E}\to (-\infty,\infty]$ admits efficient proximal mapping computations if for all $\gamma > 0$, the proximal mapping of $\gamma h$ at any $x\in {\cal E}$ defined as
\[
{\rm Prox}_{\gamma h}(x):= \argmin_{u\in {\cal E}} \frac1{2\gamma}\|u - x\|^2 + h(u)
\]
can be computed efficiently, where $\argmin$ denotes the unique minimizer. The proximal mapping of many proper closed convex functions can be computed efficiently (see, e.g., \cite{Combettes2011,beck2017first}), and is a ``building block" for a large class of first-order methods.

Besides the proximal mapping, another important ``building block" for first-order methods is the linear oracle \cite{frank1956algorithm,pmlr-v28-jaggi13}: for a proper closed convex function $h:{\cal E}\to (-\infty,\infty]$, given $v\in {\cal E}$, the linear oracle of $h$ returns an element of
\[
\Argmin_{u\in {\cal E}}\ \langle v,u\rangle + h(u),
\]
where $\Argmin$ denotes the set of minimizers.
It is known that for some $h$ that arise in practice, the linear oracles can be executed efficiently while the proximal mappings can be difficult to compute; see \cite[Section~4.1]{pmlr-v28-jaggi13} for examples in the case when $h$ is an indicator function of a compact convex set. In particular, for instances of \eqref{problem} that arise in applications such as matrix completion, it can happen that $f$ only admits efficient proximal mapping computations, while $g$ only admits efficient linear oracles.
{\em Can one design an algorithm for \eqref{problem} that allows the flexible use of both proximal mapping computations and linear oracles?}

Recent seminal works along this direction of research are \cite{yurtsever2018conditional,yurtsever2019conditional,silveti2020generalized,argyriou2014hybrid}.
\add{The work \cite{argyriou2014hybrid} studied \eqref{problem} when $f$ admits efficient proximal mapping computations, $g$ can be expressed as the sum of a function that admits efficient linear oracles and a smooth part that has H\"olderian continuous gradient with exponent $\nu \in (0,1]$, the mapping $A$ in \eqref{problem} is the negative identity map and $c=0$. Their algorithm is essentially based on the following penalty function, where $\beta > 0$:
\[
\widehat F_\beta(x,y) := f(x) + g(y) + \frac{\beta}{2}\|x - By\|^2.
\]
In each iteration of their algorithm, given $\beta_t > 0$, they apply {\em one} step of the proximal gradient algorithm (with respect to $x$) and {\em one} step of the conditional gradient algorithm (with respect to $y$) to $\widehat F_\beta$, and then $\beta_t$ is updated. In particular, unlike classical penalty methods, this algorithm does not involve any inner loops for solving subproblems: e.g., the penalty function $\widehat F_{\beta_t}$ is not minimized up to a prescribed tolerance in each iteration. In this sense, their algorithm is a {\em single-loop} algorithm. According to \cite[Corollary~4.5]{argyriou2014hybrid}, when $f$ is in additional Lipschitz continuous, by setting $\beta_0 > 0$ and $\beta_{t+1} = \beta_0\sqrt{t+2}$ and the stepsize in the conditional-gradient step to be $2/(t+2)$ for all $t \geq 0$, the sequence $\{ y^t\}$ generated by their algorithm satisfies
\[
    \left| f(By^t)+ g(y^t) - {\sf val} \right| = {\cal O}(t^{-\min\{\nu, 1/2\}}),
\]
where ${\sf val}$ is the optimal value of  \eqref{problem}. The subsequent work \cite{yurtsever2018conditional} studied \eqref{problem} under the same assumptions on $f$, $A$ and $c$ in \cite{argyriou2014hybrid}, but they assumed that $g$ can be decomposed as the sum of the indicator function of a compact convex set admitting efficient linear oracles\footnote{When $h$ is the indicator function of a compact convex set, we call the linear oracles of $h$ linear oracles of the set.} and a smooth part having Lipschitz continuous gradient. When $f$ is the indicator function of a closed convex set ${\cal C}$, under a standard constraint qualification, it was proved in \cite[Theorem~3.3]{yurtsever2018conditional} that when $\beta_t = \beta \sqrt{t+2}$ (for some $\beta > 0$) and the stepsize in the conditional-gradient step is chosen as $2/(t+2)$ for all $t\ge 0$, the sequence $\{(x^t,y^t)\}$ generated by their algorithm satisfies
\begin{equation}\label{complexity}
|g(y^t) - {\sf val}| = {\cal O}(1/\sqrt{t})\ \ {\rm and}\ \ {\rm dist}(By^t,{\cal C}) = {\cal O}(1/\sqrt{t}),
\end{equation}
where ${\sf val}$ is the optimal value of \eqref{problem}.
}

Later, based on a similar {\em single-loop} idea, in \cite{yurtsever2019conditional}, the authors developed a single-loop augmented-Lagrangian-based method for \eqref{problem} under the same assumptions on $g$, $A$ and $c$ as in \cite{yurtsever2018conditional}, but allowed $f$ that admits efficient proximal mapping computations and can be written as the (separable) sum of the indicator function of a closed convex set and a Lipschitz continuous function. Under a standard constraint qualification and suitable choices of parameters, complexity results on the objective value deviations and feasibility violations similar to \eqref{complexity} were established; see Theorem 3.1 and Section 3.3 of \cite{yurtsever2019conditional}. Most recently and independently, the authors in \cite{silveti2020generalized} considered another special instance of \eqref{problem} with $A$ being an injective negative partial identity map, $f$ admitting efficient proximal mapping computations, and $g$ being the sum of two parts: a nonsmooth part that is Lipschitz continuous on its domain and admits efficient linear oracles, and a smooth part with gradient being $(G,\zeta)$-smooth -- this notion generalizes the notion of H\"older continuity; see \cite[Definition~2.5]{silveti2020generalized}. Their algorithm is also single-loop and makes use of both penalty and augmented Lagrangian functions, and asymptotic convergence was established under suitable assumptions.

Here, \add{motivated by \cite{argyriou2014hybrid,yurtsever2018conditional}}, we propose a single-loop algorithm based on a penalty function (see \eqref{penalty2} below) for solving \eqref{problem} in a general setting. Specifically, our framework allows general linear maps $A$ and $B$, an $f$ that can be decomposed as the sum of a smooth part $f_1$ having H\"older continuous gradient (with exponent $\mu\in (0,1]$) and a nonsmooth part $f_2$ that admits efficient proximal mapping computations,\footnote{Recall that this means the proximal mapping of $\gamma f_2$ can be computed efficiently for all $\gamma > 0$.} and a $g$ that can be decomposed as the sum of a smooth part $g_1$ having H\"older continuous gradient (with exponent $\nu\in (0,1]$) and a nonsmooth part $g_2$ that admits efficient linear oracles. Each iteration of our algorithm involves one step of the proximal gradient algorithm (with respect to $x$) and one step of the conditional gradient algorithm (with respect to $y$) applied to the penalty function. These steps can be performed efficiently thanks to our assumptions on $f_2$ and $g_2$.

In this paper, we analyze the convergence properties of the aforementioned single-loop algorithm under a standard constraint qualification.
Our contributions are summarized as follows:
\begin{enumerate}[{\rm (i)}]
  \item Under a mild domain boundedness assumption, we establish bounds on the objective value deviations and feasibility violations along the sequence generated by our algorithm. Specifically, when the penalty parameter $\beta_t = \beta_0 (t+1)^{1-\min\{\mu,\nu,1/2\}}$ (for some $\beta_0 > 0$) and the stepsize $\alpha_t$ in the conditional-gradient step is chosen as $2/(t+2)$ for all $t\ge 0$, the $\{(x^t,y^t)\}$ generated by our algorithm satisfies
\begin{equation}\label{complexity22}
|f(x^t) + g(y^t) - {\sf val}| = {\cal O}(t^{-\min\{\mu,\nu,1/2\}})\ \ {\rm and}\ \ \|Ax^t + By^t - c\| = {\cal O}(t^{-1/2}),
\end{equation}
where ${\sf val}$ is the optimal value of \eqref{problem}. These bounds match the bounds \eqref{complexity} \add{from \cite{argyriou2014hybrid,yurtsever2018conditional}} asymptotically under their settings.
    We also study the effect of choosing $\beta_t = \beta_0 (t+1)^\delta$ (for a general $\delta\in (0,1)$) for all $t\ge 0$ in our analysis.
    \item We show that if each of $f_2$ and $g_2$ is the sum of a real-valued convex function and the indicator function of a compact convex set and, moreover, the extended objective of \eqref{problem} (i.e., the sum of the objective and the indicator function of constraint set) is a Kurdyka-{\L}ojasiewicz (KL) function with exponent $\alpha\in [0,1)$, then $\{\dist((x^t,y^t),{\cal S})\}$ decays asymptotically at a rate of $t^{-(1-\alpha)\min\{\mu,\nu,1/2\}}$, where $\{(x^t,y^t)\}$ is generated by our algorithm with $\beta_t = \beta_0 (t+1)^{1-\min\{\mu,\nu,1/2\}}$ (for some $\beta_0 > 0$) and $\alpha_t = 2/(t+2)$ for all $t\ge 0$, and ${\cal S}$ is the optimal solution set of \eqref{problem}. We also present an example to illustrate how the KL exponent of the extended objective can be derived based on the recent studies of error bounds for conic feasibility problems \cite{lindstrom2023error,lindstrom2024optimal}, and develop a rule to deduce such a KL exponent from that of the Lagrangian of \eqref{problem}.
\end{enumerate}

The rest of this paper is organized as follows. In section~\ref{section_notation and preliminaries}, we review some notation and preliminary materials. Our algorithm is presented in section~\ref{section_algorithm_framework}. The complexity results such as \eqref{complexity22} are established in section~\ref{section_convergence_rate}, and the local convergence rate of $\{\dist((x^t,y^t),{\cal S})\}$ and the KL exponent of the extended objective of \eqref{problem} are studied in section~\ref{section_KL property}. Finally, we illustrate our convergence rate results numerically in section~\ref{section_numerical experiments}.

\section{Notation and preliminaries} \label{section_notation and preliminaries}
In this paper, $\mathcal{E}$,  $\mathcal{E}_1$ and $\mathcal{E}_2$ are finite dimensional Hilbert spaces. With an abuse of notation, we let $\langle \cdot , \cdot \rangle$ denote the standard inner product and $\| \cdot \|$ denote the associated norm in the underlying Hilbert space. For a linear map $A$, we use $A^*$ to denote its adjoint, and $\lambda_{\max}(A^*A)$ to denote the maximum eigenvalue value of $A^*A$.
We use $\mathbb{R}^n$ \add{(resp, $ \mathbb{C}^n $)} to denote the $n$-dimensional \add{real (resp., complex)} Euclidean space,
and $\mathbb{R}^{m \times n}$ \add{(resp., $ \mathbb{C}^{m\times n}$)} to denote the set of $m\times n$ \add{real (resp., complex)} matrices. For an $x \in \mathbb{R}^n$, we use $\|x\|_p$ to denote the $\ell_p$ norm, where $p \in [1,\infty]$.

For an extended-real-valued function $h: \mathcal{E} \rightarrow [-\infty, \infty]$, let $\dom h =\{x \in \mathcal{E}:  h(x)< \infty \}$ be its domain.
We use $\epi h$ to denote its epigraph,  which is defined as
$$\epi h= \{(x,t) \in \mathcal{E} \times \mathbb{R} : h(x) \leq t\}.$$
We say that $h$ is proper if $ \dom h \neq\emptyset$ and $h$ never attains $-\infty$. A proper function is closed if its epigraph is closed.
For a proper closed convex function $h: \mathcal{E} \rightarrow [-\infty, \infty]$, we use $\partial h(x)$ to denote its subdifferential at $x\in {\cal E}$,
i.e.,
$$
\partial h(x) = \left\{\xi \in {\cal E}: h(y)- h(x) \geq\langle \xi, y-x \rangle \ \ \  \forall y \in {\cal E}\right\},
$$
and let $\dom \partial h = \{x\in {\cal E}:\; \partial h(x)\neq \emptyset\}$.

For a nonempty convex set ${\cal C} \subseteq \mathcal{E}$, we use $\delta_{{\cal C}}$ to denote the indicator function, which is defined as
\begin{align}
    \delta_{{\cal C}}(x) = \left\{
        \begin{array}{cc}
            0  & x\in {\cal C},  \nonumber \\
            \infty & x \notin {\cal C}.  \nonumber
        \end{array}
     \right.  \nonumber
\end{align}
We use ${\rm{ri}}\,{\cal C}$ to denote the relative interior of ${\cal C}$. For a point $x \in \mathcal{E}$, we use ${\rm{dist}}(x, {\cal C}):= {\rm{inf}}_{y \in {\cal C}}\|x - y\|$ to denote the distance from $x$ to ${\cal C}$. Finally, when ${\cal C}$ is nonempty closed and convex, we use $P_{{\cal C}}(x)$ to denote the unique projection of $x$ onto ${\cal C}$.

Next, we recall some important definitions that will be used in our convergence analysis. We start with the following standard constraint qualification \add{ for \eqref{problem}; see, e.g., \cite[Appendix~B]{fazel2013hankel}, \cite[Assumption~2]{li2016majorized} and \cite[Assumption~2]{li2016schur}}.
\begin{definition}
    We say that \textbf{CQ} holds for \eqref{problem} if $ c \in A\,\mathrm{ri}\left(\dom f\right) +
    B\,\mathrm{ri}\left(\dom g\right).  \label{CQ}$
\end{definition}
Based on this {\textbf{CQ}}, it is standard to establish the optimality condition for \eqref{problem}, which is stated in the following lemma.
\begin{lemma}[{{Optimality condition}}] \label{optimal}
    Consider \eqref{problem} and suppose that the {\textbf{CQ}} holds. Let $(x^*,y^*)$ satisfy $Ax^* + By^* = c$. Then the following statements are equivalent.
    \begin{enumerate}[{\rm (i)}]
      \item The point $(x^*, y^*)$ is a minimizer of \eqref{problem}.
      \item There exists a $\bar{\lambda} \in \mathcal{E}$ such that $0 \in \partial f(x^*) + A^*\bar{\lambda}$ and $0 \in \partial g(y^*) + B^*\bar{\lambda}$.
    \end{enumerate}
\end{lemma}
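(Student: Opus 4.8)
The plan is to recast \eqref{problem} as the unconstrained minimization of the proper closed convex function $\Phi(x,y):=f(x)+g(y)+\delta_{\cal L}(x,y)$ on ${\cal E}_1\times{\cal E}_2$, where ${\cal L}:=\{(x,y):\ Ax+By=c\}$, and then apply convex subdifferential calculus. Two elementary ingredients will be used. First, by separability, the subdifferential of $(x,y)\mapsto f(x)+g(y)$ at $(x^*,y^*)$ is $\partial f(x^*)\times\partial g(y^*)$. Second, writing $L(x,y):=Ax+By$, the set ${\cal L}$ is an affine subspace with underlying linear subspace ${\cal L}_0:=\ker L$, so $\partial\delta_{\cal L}(x^*,y^*)$ equals the normal cone $N_{\cal L}(x^*,y^*)={\cal L}_0^\perp$, and a short computation identifies ${\cal L}_0^\perp=\{(A^*\lambda,\,B^*\lambda):\ \lambda\in{\cal E}\}={\rm Range}(L^*)$.

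The implication (ii) $\Rightarrow$ (i) requires no constraint qualification. If $-A^*\bar\lambda\in\partial f(x^*)$ and $-B^*\bar\lambda\in\partial g(y^*)$, then for every $(x,y)$ feasible for \eqref{problem}, the two subgradient inequalities give
\[
f(x)+g(y)\ \ge\ f(x^*)+g(y^*)-\langle\bar\lambda,\ A(x-x^*)+B(y-y^*)\rangle\ =\ f(x^*)+g(y^*),
\]
where the equality uses $Ax+By=c=Ax^*+By^*$; hence $(x^*,y^*)$ is a minimizer of \eqref{problem}.

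For (i) $\Rightarrow$ (ii), I would first note that $(x^*,y^*)$ is a minimizer of \eqref{problem} if and only if $0\in\partial\Phi(x^*,y^*)$. The role of the \textbf{CQ} is to license the sum rule for the two summands of $\Phi$: since ${\rm ri}\,(\dom f\times\dom g)={\rm ri}(\dom f)\times{\rm ri}(\dom g)$ and ${\rm ri}(\dom\delta_{\cal L})={\cal L}$ (because ${\cal L}$ is affine), the hypothesis $c\in A\,{\rm ri}(\dom f)+B\,{\rm ri}(\dom g)$ produces a point in $\bigl({\rm ri}(\dom f)\times{\rm ri}(\dom g)\bigr)\cap{\cal L}$. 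Standard convex calculus then yields
\[
\partial\Phi(x^*,y^*)=\bigl(\partial f(x^*)\times\partial g(y^*)\bigr)+{\rm Range}(L^*),
\]
so $0\in\partial\Phi(x^*,y^*)$ means there exists $\bar\lambda\in{\cal E}$ with $-A^*\bar\lambda\in\partial f(x^*)$ and $-B^*\bar\lambda\in\partial g(y^*)$, i.e., statement (ii). The only mildly technical points are the normal-cone formula $N_{\cal L}(x^*,y^*)={\rm Range}(L^*)$ and the validity of the sum rule on the product space, both of which are classical; I do not anticipate a genuine obstacle, since the substance of the lemma is essentially the observation that the stated \textbf{CQ} is exactly the Slater-type relative-interior condition needed to split $\partial\Phi$.
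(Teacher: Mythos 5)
Your proposal is correct. The paper itself does not give an argument: it simply cites \cite[Theorem~28.2]{Rockafellar+1970} (existence of Kuhn--Tucker vectors for an ordinary convex program under the relative-interior condition) together with the definition of the \textbf{CQ}. You instead give a self-contained derivation: you absorb the constraint into the objective as $\Phi=f+g+\delta_{\cal L}$, observe that the \textbf{CQ} is precisely the statement that $\bigl({\rm ri}(\dom f)\times{\rm ri}(\dom g)\bigr)\cap{\cal L}\neq\emptyset$, invoke the subdifferential sum rule (Rockafellar's Theorem~23.8) licensed by that relative-interior condition, and identify $N_{\cal L}(x^*,y^*)$ with ${\rm Range}(L^*)=\{(A^*\lambda,B^*\lambda):\lambda\in{\cal E}\}$; the reverse implication you handle directly from the subgradient inequalities, correctly noting it needs no constraint qualification. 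All steps check out (the separability of $\partial(f\oplus g)$, the computation $(\ker L)^\perp={\rm Range}(L^*)$ in finite dimensions, and the translation of $0\in\partial\Phi(x^*,y^*)$ into statement (ii) are each standard). What your route buys is transparency --- it makes explicit exactly where the \textbf{CQ} enters (to split $\partial\Phi$) and which direction is unconditional --- at the cost of being longer than the paper's one-line citation; both arguments ultimately rest on the same relative-interior calculus from Rockafellar.
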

\begin{proof}
    The result follows immediately from the definition of {\textbf{CQ}} in Definition~\ref{CQ} and \cite[Theorem~28.2]{Rockafellar+1970}.
\end{proof}

We also recall the definitions of Kurdyka-{\L}ojasiewicz (KL) property and KL exponent.  These notions are instrumental in analyzing the convergence properties of many contemporary first-order methods; see, e.g., \cite{attouch2009convergence,bolte2014proximal,li2018calculus,attouch2010proximal,attouch2013convergence}.
\begin{definition}[{KL property and exponent}]
    We say that a proper closed convex function $h: \mathcal{E}\rightarrow (-\infty,\infty]$ satisfies the KL property
    at $\bar{x}\in \dom \partial h$ if there exist $r\in (0,\infty]$, a neighborhood $U$ of $\bar{x}$
    and a continuous concave function $\phi: [0,r) \rightarrow \mathbb{R}_+$ such that
\begin{enumerate}[{\rm (i)}]
   \item $\phi(0) = 0$, $\phi$ is continuously differentiable on $(0,r)$ and $\phi' >0$.
   \item For all $x\in U$ with $h(\bar{x}) < h(x) < h(\bar{x}) +r$, it holds that
                            \begin{align}
                                \phi'(h(x)-h(\bar{x}))\mathrm{dist}(0,\partial h(x)) \geq 1. \nonumber
                            \end{align}
\end{enumerate}
    If $h$ satisfies the KL property at $\bar x\in {\rm dom}\,\partial h$ and the $\phi(t)$ above can be chosen as $ \rho t^{1-\alpha}$ for some $\rho>0$ and $\alpha\in[0,1)$, then we
    say that $h$ satisfies the KL property with exponent $\alpha$ at $\bar{x}$.

    A proper closed convex function $h$ satisfying the KL property at every point in $\dom \partial h$ is called a KL function. A proper closed convex function $h$ satisfying the KL property with exponent $\alpha \in [0, 1)$ at every point in $\dom\partial h$ is called a KL function with exponent $\alpha$.
\end{definition}
KL functions abound in contemporary applications; e.g., a proper closed convex semi-algebraic function is a KL function with exponent $\alpha\in [0,1)$; see \cite[Corollary~16]{bolte2007clarke}.

Finally, we recall the Abel's summation formula involving vector inner products. We provide a short proof for the convenience of the readers.
\begin{lemma} \label{lemma_Abel}
  For two sequences
$\{a^t\}$ and $\{b^t\} \subset {\cal E}$, it holds that for all $k\ge 2$,
\begin{align}\label{Abel_formula_vec}
  \sum_{t = 1}^{k-1} \langle a^t -a^{t+1}, b^t \rangle = \langle a^1, b^1 \rangle -\langle a^{k}, b^{k-1}\rangle  + \sum_{t=1}^{k-2}\langle a^{t+1}, b^{t+1} - b^t \rangle.
\end{align}
\end{lemma}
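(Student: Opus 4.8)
The plan is to prove \eqref{Abel_formula_vec} by a direct ``summation by parts'' manipulation: expand the inner product on the left into two sums, perform a single index shift on one of them, and recombine the telescoping remainder. I would first write
\begin{equation*}
\sum_{t=1}^{k-1}\langle a^t - a^{t+1}, b^t\rangle
= \sum_{t=1}^{k-1}\langle a^t, b^t\rangle - \sum_{t=1}^{k-1}\langle a^{t+1}, b^t\rangle,
\end{equation*}
using bilinearity of $\langle\cdot,\cdot\rangle$ and finiteness of the sums (so that the splitting is legitimate).

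Next, I would isolate the boundary terms. In the first sum on the right I peel off the $t=1$ term, obtaining $\langle a^1,b^1\rangle + \sum_{t=2}^{k-1}\langle a^t,b^t\rangle$. In the second sum I substitute $s=t+1$, turning it into $\sum_{s=2}^{k}\langle a^{s}, b^{s-1}\rangle$, and then peel off the $s=k$ term to get $\sum_{s=2}^{k-1}\langle a^{s}, b^{s-1}\rangle + \langle a^{k}, b^{k-1}\rangle$. Substituting these back in yields
\begin{equation*}
\sum_{t=1}^{k-1}\langle a^t - a^{t+1}, b^t\rangle
= \langle a^1,b^1\rangle - \langle a^{k},b^{k-1}\rangle + \sum_{t=2}^{k-1}\bigl(\langle a^t,b^t\rangle - \langle a^t,b^{t-1}\rangle\bigr)
= \langle a^1,b^1\rangle - \langle a^{k},b^{k-1}\rangle + \sum_{t=2}^{k-1}\langle a^t, b^t - b^{t-1}\rangle.
\end{equation*}
A final reindexing $t\mapsto t+1$ in the last sum rewrites it as $\sum_{t=1}^{k-2}\langle a^{t+1}, b^{t+1} - b^{t}\rangle$, which is exactly \eqref{Abel_formula_vec}.

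There is no real obstacle here; the only point requiring mild care is the index bookkeeping and the degenerate case $k=2$, in which both the sum $\sum_{t=2}^{k-1}$ and the sum $\sum_{t=1}^{k-2}$ are empty and the identity reduces to $\langle a^1 - a^2, b^1\rangle = \langle a^1, b^1\rangle - \langle a^2, b^1\rangle$, which holds trivially. (Alternatively, one could run a short induction on $k$, with the inductive step amounting to adding the term $\langle a^{k-1}-a^{k}, b^{k-1}\rangle$ to both sides and simplifying; but the direct computation above is cleaner and I would use that.)
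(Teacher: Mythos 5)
Your proof is correct, and it is essentially the same elementary summation-by-parts computation as the paper's: the paper rewrites each summand as $\langle a^t,b^t\rangle-\langle a^{t+1},b^{t+1}\rangle+\langle a^{t+1},b^{t+1}-b^t\rangle$ and telescopes, whereas you split the sum and shift indices, but the two manipulations are interchangeable and yield the identity in the same way.
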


\begin{proof}
    For each $t\ge 1$, we have $\langle a^t -a^{t+1}, b^t \rangle = \langle a^t, b^t \rangle -\langle a^{t+1}, b^{t+1}\rangle + \langle a^{t+1},b^{t+1}- b^t\rangle$. Summing both sides of this equality from $t = 1$ to $k-1$ gives
  \begin{align*}
    \sum_{t = 1}^{k-1} \langle a^t -a^{t+1}, b^t \rangle &=\langle a^1, b^1 \rangle -\langle a^{k}, b^{k}\rangle  + \sum_{t=1}^{k-1}\langle a^{t+1}, b^{t+1} - b^t \rangle\\
    &= \langle a^1, b^1 \rangle -\langle a^{k}, b^{k-1}\rangle  + \sum_{t=1}^{k-2}\langle a^{t+1}, b^{t+1} - b^t \rangle.
\end{align*}
\end{proof}

\section{Algorithmic framework}\label{section_algorithm_framework}

In this section, we present our algorithm for \eqref{problem} and prove some auxiliary lemmas for our convergence analysis in subsequent sections. Before describing our algorithm, we first present two additional structural assumptions on \eqref{problem}. The first one states that the domains of $f$ and $g$ are bounded, while the second one states that $f$ and $g$ can be written as the sum of a smooth part and a possibly nonsmooth part; we impose suitable continuity assumptions on the gradient of the former part, and assume the efficient solvability of some associated subproblems for the latter part.

\begin{assumption} \label{Assumption 1}
  In \eqref{problem}, the domains of $f$ and $g$ are bounded, i.e.,
      \begin{align} \label{Definition_of_diameter}
    D_f := \sup\limits_{x_1,x_2\in \dom f}\|x_1 -x_2\rVert < \infty \ \ {\rm and}\ \
    D_g := \sup\limits_{y_1,y_2\in \dom g}\|y_1 -y_2\rVert<\infty.
\end{align}
\end{assumption}
\begin{assumption} \label{Assumption 2}
    In \eqref{problem}, we have $f = f_1 +f_2$ and $g = g_1 +g_2$,
    where $f_1: \mathcal{E}_1 \rightarrow \R$ and $g_1: \mathcal{E}_2 \rightarrow \R$ are convex and smooth, $f_2: \mathcal{E}_1 \rightarrow (-\infty, \infty]$ and $g_2: \mathcal{E}_2 \rightarrow (-\infty, \infty]$ are proper, closed and convex, and satisfy the following properties:
    \begin{enumerate}[{\rm (i)}]
      \item The gradients $\nabla f_1$ and $\nabla g_1$ are H\"older continuous on $\dom f$ and $\dom g$ respectively. In particular, this implies the existence of $\mu \in (0,1]$, $\nu\in (0,1]$, $M_f\ge 0$ and $M_g\ge 0$ such that
    \begin{equation}\label{Holder_property}
    \begin{aligned}
        f_1(y) &\leq f_1(x) + \left\langle \nabla f_1(x), y-x \right\rangle +\frac{M_f}{\mu +1}
        \|y-x\rVert^{\mu +1}\quad \forall x,y\in \dom f,  \\
        g_1(y) & \leq g_1(x) + \left\langle \nabla g_1(x), y-x \right\rangle +\frac{M_g}{\nu+1}
        \|y-x\rVert^{\nu+1} \quad \forall x,y\in \dom g.
    \end{aligned}
    \end{equation}
    \item The unique minimizer of the following problem can be computed efficiently for every $\gamma > 0$ and $u\in \mathcal{E}_1$:
        \begin{equation}\label{proxf}
        \min_{x\in \mathcal{E}_1} \frac1{2\gamma}\|x - u\|^2 + f_2(x).
        \end{equation}
    \item For every $v\in \mathcal{E}_2$, a minimizer of the following problem exists and can be computed efficiently:
    \begin{equation}\label{LOg}
    \min_{y\in \mathcal{E}_2} \langle v,y\rangle + g_2(y).
    \end{equation}
    \end{enumerate}
\end{assumption}
\begin{remark}[Comments on Assumption~\ref{Assumption 2}]
\begin{enumerate}[{\rm (i)}]
  \item In \eqref{Holder_property}, the $\mu$ and $\nu$ can be taken as the H\"olderian exponents of $\nabla f_1$ and $\nabla g_1$, respectively, and $M_f$ and $M_g$ can be chosen as the H\"olderian constants of $\nabla f_1$ and $\nabla g_1$,  respectively.
    In particular, if $f_1$ (resp.,~$g_1$) has Lipschitz continuous gradient on  $\dom f$ (resp., $\dom g$), then $\mu$ (resp.,~$\nu$) in \eqref{Holder_property} can be set to 1. It is well known that many loss functions in signal processing and machine learning have Lipschitz or H\"older continuous gradients; see, e.g., \cite{yu2017robust,zhou2015ell_1}.
  \item The objective of \eqref{proxf} is strongly convex and thus \eqref{proxf} has a unique minimizer. This unique minimizer is known as the proximal mapping of $\gamma f_2$ at $u$, and can be obtained efficiently for a wide variety of $f_2$; see, e.g., \cite{Combettes2011,beck2017first}.
  \item The problem \eqref{LOg} is typically called the linear oracle (see, e.g., \cite{nesterov2018complexity,ghadimi2019conditional,harchaoui2015conditional}). The efficiency in solving these oracles is the key for the efficient implementation of the conditional gradient algorithm (see, e.g.,  \cite{freund2016new,pmlr-v28-jaggi13,frank1956algorithm}).
\end{enumerate}
\end{remark}

\add{We illustrate the versatility of our assumptions in the following examples. }
\begin{example}
\add{Consider the following  compressed sensing problem                                                                                                                                                                                                                                                        with (heavy-tailed) generalized Gaussian measurement noise:
    \begin{equation}\label{exam_prob1}
        \begin{array}{cl}
             \min\limits_{x \in \R^n} & \|x\|_1 \\
            {\rm{s.t.}} & \|Ax - b\|_p \leq \sigma,
        \end{array}
    \end{equation}
    where $p \in (1,2)$, $\sigma >0$, $b\in \R^m$, $n\ge m \ge 2$ and $A \in \R^{m \times n}$ has full row rank. Notice that the feasible region of \eqref{exam_prob1} is nonempty (indeed, it contains $A^{\dagger}b$) and hence the solution
    set is nonempty.}

\add{Let $\widehat x = A^\dagger b$. Notice that for any solution $x^*$ to \eqref{exam_prob1}, we have
$\|x^*\|_\infty\le \|x^*\|_1 \leq \|\widehat x\|_1 < \|\widehat x\|_1 + 1$. Thus, the solution set of \eqref{exam_prob1} is contained in the interior of the set $\{x\in \R^n: \|x\|_\infty \leq 1+\|\widehat x\|_1 \} $.
Therefore, we can reformulate \eqref{exam_prob1} as follows by introducing a new variable $y$:
\begin{equation}\label{exam_reform}
  \begin{array}{cl}
    \min\limits_{x,y} & \|x\|_1 \\
    {\rm{s.t.}} & \|y\|_p \leq \sigma, \quad \|x\|_\infty\le \|\widehat x\|_1+1, \quad Ax-y=b.
  \end{array}
\end{equation}
One can check that this is a special case of \eqref{problem} and that {\bf CQ}, Assumptions~\ref{Assumption 1} and \ref{Assumption 2} hold. Specifically, one can take $f(x) = \|x\|_1 + \delta_{\|\cdot\|_{\infty}\le \|\widehat{x}\|_1+1}(x)$ and $g(y) = \delta_{\|\cdot\|_p \le \sigma}(y)$.
Then in Assumption~\ref{Assumption 2}, we can set $f_1 = 0$, $f_2 = f$, $M_f =0$, $\mu = 1$, and $g_1 = 0$, $g_2 = g$, $M_g= 0$, $\nu = 1$, and we note that Assumption~\ref{Assumption 2}(ii) and (iii) hold; see, e.g., \cite[Example 2.2]{beck2009fast} and \cite[Section 5.1]{ito2023parameter} for discussions of the corresponding \eqref{proxf} and \eqref{LOg}.
In addition, notice that $\dom f  = \{x: \|x\|_{\infty}\le \|\widehat{x}\|_1+1 \}$ and $\dom g  = \{y: \|y\|_p \le \sigma\}$. Then, we have $D_f  = 2\sqrt{n}\left( \|\widehat{x}\|_1+1 \right)$ and $D_g  = 2\sigma$ in Assumption~\ref{Assumption 1}. Finally, we also note that {\bf CQ} holds for \eqref{exam_reform} because $b = AA^\dagger b - 0$ and $A^\dagger b\in {\rm ri}(\dom f)$ and $0\in {\rm ri}(\dom g)$.}
\end{example}

\begin{example}\label{hankel_exam}
   \add{ Consider the following Hankel matrix completion problem\footnote{\add{In this example, Example~\ref{KL_hankel} and section~\ref{section_numerical experiments}, we use bold face letters to denote vectors / matrices with complex entries. Recall that for any ${\bm x} \in \mathbb{C}^n$, $\|{\bm x}\|_1 :=\sum_{j=1}^n|{\bm x}_j| = \sum_{j=1}^n\sqrt{|{\rm Re}({\bm x}_j)|^2 + |{\rm Im}({\bm x}_j)|^2}$.} }
    \begin{equation}\label{exam_prob2}
    \begin{array}{cl}
        \min\limits_{{\bm x}\in \mathbb{C}^n} & \| \Pi_{\Omega} (w\circ ({\bm x} - { \bar{\bm x}}))\|_1 \\
         {\rm s.t.}&  \|{\cal H}({\bm x})\|_* \le \sigma,
    \end{array}
\end{equation}
where $\bar {\bm x}\in \mathbb{C}^n$, $\sigma > 0$, $\|\cdot\|_*$ denotes the nuclear norm (i.e., the sum of singular values),
\[
    {\cal H}({\bm x}) := \begin{bmatrix}
  {\bm x}_1 & {\bm x}_2 & \cdots & {\bm x}_q\\
  {\bm x}_2 & {\bm x}_3 & \cdots & {\bm x}_{q+1} \\
  \vdots & \vdots & \cdots & \vdots \\
  {\bm x}_m & {\bm x}_{m+1} & \cdots & {\bm x}_n
\end{bmatrix}\in \mathbb{C}^{m\times q}
\]
with $m = \lceil \frac{n}{2} \rceil$, $q = n-m+1$, and ${\bm x}_j$ being the $j$-th component of ${\bm x}$, $\Omega \subseteq \{1,\ldots,n\}$ is the index set of the observed entries,  $\Pi_{\Omega}: \mathbb{C}^n \rightarrow \mathbb{C}^n$
is the sampling operator defined by $\left[ \Pi_{\Omega} {\bm y} \right]_j = {\bm y}_j$ if $j \in \Omega$, and $\left[ \Pi_{\Omega} {\bm y} \right]_j = 0$ otherwise, $\circ$ denotes the Hadamard (entry-wise) product, and
$w \in \R^n$ has its $j$-th entry being the number of entries along the $j$-th anti-diagonal of ${\cal H}({\bm x})$.}

\add{Problem \eqref{exam_prob2} with $\|\cdot\|_*$ replaced by ${\rm rank}(\cdot)$ and $\|\cdot\|_1$ replaced by $\|\cdot\|_2$ arises in recovery problems where the observed data  exhibits Hankel structure; see e.g., \cite{cai2023structured}.
Here, we use the nuclear norm as a proxy for the rank function and attempt to reconstruct the original signal from its noise-corrupted (specifically, Laplacian noise), partial observations $\Pi_{\Omega}({\bar{\bm x}})$ via solving \eqref{exam_prob2}.}

\add{Notice that for every ${\bm x}$ satisfying $\|{\cal H}(\bm x)\|_* \le \sigma$,  it holds that $\| {\bm x}\|_2 \le \sigma <\sigma +1$; this implies that $\|{\bm x} - \Pi_\Omega(\bar {\bm x})\|_2\le \|{\bm x}\|_2 + \|\Pi_\Omega({\bar{\bm x}})\|_2 < \sigma + \|\Pi_\Omega({\bar{\bm x}})\|_2 + 1$.
Therefore,  we can reformulate \eqref{exam_prob2} as follows:
 \begin{equation}\label{reform_prob2}
    \begin{array}{cl}
        \min\limits_{{\bm x}\in \mathbb{C}^n, {\bm Y}\in \mathbb{C}^{m\times q}} & \| \Pi_{\Omega} (w\circ ({\bm x} - {\bar{\bm x}}))\|_1 \\
         {\rm s.t.}&  \|{\bm Y}\|_* \le \sigma,~\|{\bm x} - \Pi_\Omega(\bar {\bm x})\|_2 \le \sigma+\|\Pi_\Omega({\bar{\bm x}})\|_2+1,~{\bm Y} = {\cal H}(\bm x).
    \end{array}
\end{equation}
Writing ${\bm x} = x_{_{\cal R}} + i x_{_{\cal I}}$, $\bar{\bm x} = \bar{x}_{_{\cal R}} + i\bar{x}_{_{\cal I}}$ and ${\bm Y} = Y_{_{\cal R}} + iY_{_{\cal I}}$ where $x_{_{\cal R}}$, $x_{_{\cal I}}$, $\bar{x}_{_{\cal R}}$, $\bar{x}_{_{\cal I}} \in \mathbb{R}^n$ and $Y_{_{\cal R}}$, $Y_{_{\cal I}}\in \mathbb{R}^{m \times q}$ denote the real and imaginary parts of ${\bm x}$, $\bar {\bm x}$ and ${\bm Y}$, respectively, we see that \eqref{reform_prob2} is equivalent to the following problem:
 \begin{equation}\label{reform_prob3}
    \begin{array}{cl}
        \min\limits_{
        \begin{subarray}{c}
          x_{_{\cal R}},x_{_{\cal I}} \in \R^n\\
           Y_{_{\cal R}}, Y_{_{\cal I}} \in \R^{m\times q}
        \end{subarray}  } & \sum_{j\in \Omega} w_j\sqrt{(x_{_{\cal R}} -\bar{x}_{_{\cal R}})_j^2
        + (x_{_{\cal I}} - \bar{x}_{_{\cal I}})_j^2} \\
         {\rm s.t.}&  \|Y_{_{\cal R}} + i Y_{_{\cal I}}\|_* \le \sigma,~ \|x_{_{\cal R}} + ix_{_{\cal I}} - \Pi_\Omega(\bar {\bm x})\|_2 \le \sigma + \|\Pi_\Omega({\bar{\bm x}})\|_2 +1,\\
         & Y_{_{\cal R}}-{\cal H}(x_{_{\cal R}}) =0,~ Y_{_{\cal I}} - {\cal H}(x_{_{\cal I}}) = 0.
    \end{array}
\end{equation}
One can check that \eqref{reform_prob3} is another special case of \eqref{problem} and that {\bf CQ}, Assumptions~\ref{Assumption 1} and \ref{Assumption 2} hold. Indeed,
we can set $f(x_{_{\cal R}}, x_{_{\cal I}}) \!=\!  \sum_{j\in \Omega}\! w_j\sqrt{(x_{_{\cal R}} -\bar{x}_{_{\cal R}})_j^2
        \!+\! (x_{_{\cal I}} - \bar{x}_{_{\cal I}})_j^2} + \delta_{\|\cdot + i\cdot - \Pi_\Omega(\bar {\bm x})\|_2 \le \sigma + \|\Pi_\Omega({\bar{\bm x}})\|_2 +1}(x_{_{\cal R}}, x_{_{\cal I}}) $
and  $g(Y_{_{\cal R}},Y_{_{\cal I}}) = \delta_{\|\cdot + i \cdot \|_* \le \sigma}(Y_{_{\cal R}},Y_{_{\cal I}})$. Then in Assumption~\ref{Assumption 2}, we can take $f_1 = 0$, $f_2 = f$, $M_f =0$, $\mu = 1$, and $g_1 = 0$, $g_2 = g$, $M_g= 0$, $\nu = 1$, and we note that Assumption~\ref{Assumption 2}(ii) and (iii) hold; see also section~\ref{section_numerical experiments} for the computation of the corresponding \eqref{proxf} and \eqref{LOg}.
In addition, notice that $\dom f  = \{(x_{_{\cal R}}, x_{_{\cal I}}): \|x_{_{\cal R}} + ix_{_{\cal I}} - \Pi_\Omega(\bar {\bm x})\|_2 \le \sigma + \|\Pi_\Omega({\bar{\bm x}})\|_2 +1 \}$ and $\dom g  = \{(Y_{_{\cal R}},Y_{_{\cal I}}): \|Y_{_{\cal R}} + i Y_{_{\cal I}}\|_* \le \sigma\}$.
Then, we have $ D_f = 2(\sigma + \|\Pi_\Omega({\bar{\bm x}})\|_2 +1 )$ and $D_g = 2\sigma$ in Assumption~\ref{Assumption 1}. Finally, one can deduce from $(0,0)\in {\rm ri}(\dom f)$ and $(0,0)\in {\rm ri}(\dom g)$ that {\bf CQ} holds for \eqref{reform_prob3}.}
\end{example}

We now describe our algorithm for solving \eqref{problem} under Assumptions~\ref{Assumption 1} and \ref{Assumption 2}. Our algorithm takes advantage of the efficiency in solving \eqref{proxf} and \eqref{LOg}, and is inspired by the recent \add{works \cite{yu2020rc,argyriou2014hybrid,yurtsever2018conditional}}, which proposed single-loop penalty-based methods for special instances of \eqref{problem}. Our algorithm is also a single-loop penalty-based method, and is obtained by simplifying a standard penalty method for \eqref{problem}.

Specifically, in a standard implementation of the penalty method, in each iteration, we fix a penalty parameter $\beta_t$ and consider the following penalty function for \eqref{problem}:
\begin{align}\label{penalty2}
    \widetilde F_{\beta_t}(x,y) := f_1(x) + f_2(x) + g_1(y) + g_2(y) + \frac{\beta_t}{2}\|Ax+ By-c \rVert^2.
\end{align}
Notice that in view of Assumption~\ref{Assumption 2}, for each fixed $\beta_t$, one can approximately minimize $\widetilde F_{\beta_t}$ by an alternating minimization scheme, where one can apply the proximal gradient algorithm for approximately minimizing $\widetilde F_{\beta_t}$ with respect to $x$ (whose subproblems can be solved efficiently thanks to Assumption~\ref{Assumption 2}(ii)), and then apply the conditional gradient algorithm for approximately minimizing $\widetilde F_{\beta_t}$ with respect to $y$ (whose subproblems can be solved efficiently thanks to Assumption~\ref{Assumption 2}(iii)). Upon obtaining an approximate minimizer $(\widetilde x^t,\widetilde y^t)$ of $\widetilde F_{\beta_t}$, one then updates $\beta_t$ and minimizes $\widetilde F_{\beta_{t+1}}$, using $(\widetilde x^t,\widetilde y^t)$ as the initial point. While the above standard penalty method is natural, it involves multiple inner loops and can be inefficient in practice.

Here, following the ideas in the recent \add{works \cite{yurtsever2018conditional,yurtsever2019conditional,argyriou2014hybrid,silveti2020generalized,yu2020rc}}, we apply {\em one} step of the proximal gradient algorithm and {\em one} step of the conditional gradient algorithm for each fixed $\beta_t$ in the penalty method described above. Our algorithm, which we call a single-loop proximal-conditional-gradient penalty method ($\Alg$), is presented as Algorithm~\ref{alg1} below, where \eqref{get_xk} corresponds to the proximal-gradient step, \eqref{get_uk} and \eqref{get_yk} correspond to applying one step of the conditional gradient algorithm to minimizing $\widetilde F_{\beta_t}(x^{t+1},\cdot)$, and the update of $H_t$ in \eqref{definition_Ht} is designed to cater for the H\"older continuity of $\nabla f_1$; in particular, when $\nabla f_1$ is Lipschitz continuous so that one chooses $\mu = 1$, it holds that $H_t \equiv \max\{H_0, M_f\}$ for all $t\ge 1$. \add{The parameter $\delta$ controls how fast the penalty parameter grows, and will be chosen judiciously later to balance the objective
value deviations and the feasibility violations; see Remark~\ref{choose_delta}.}

\begin{algorithm}
	\caption{$\Alg$ for \eqref{problem} under Assumptions~\ref{Assumption 1} and~\ref{Assumption 2}.}
	\label{alg1}
	\begin{algorithmic}
		\STATE \textbf{Step 0.} Choose $x^0 \in \dom f$, $y^0 \in \dom g$, $\beta_0>0$, $H_0>0$, $\delta\in (0,1)$.
        Let $\lambda_{A}=\lambda_{\max}(A^*A)$.
		\STATE \textbf{Step 1.} For $t=0,1,\cdots$, let $\alpha_t = \frac2{t+2}$ and compute
            \begin{align}
                & R^t \!=\! Ax^t+By^t-c,\label{Rt}\\
                &x^{t+1} \!=\! \argmin_{x\in {\cal E}_1} \ \langle \nabla f_1(x^t) + \beta_{t}A^*R^t, x-x^t\rangle +\frac{H_t+\lambda_A \beta_t}{2}\|x-x^t\rVert^2 +f_2(x), \label{get_xk} \\
                & \widetilde R^t \!=\! Ax^{t+1} +By^t-c, \label{tRt}\\
                &u^t \!\in\! \Argmin_{y \in {\cal E}_2} \ \langle\nabla g_1(y^t) + \beta_{t}B^*\widetilde R^t,y\rangle +g_2(y), \label{get_uk}  \\
                &y^{t+1} \!=\! y^t +\alpha_t(u^t-y^t), \label{get_yk} \\
                &H_{t+1} \!=\! \max\left\{H_0, \frac{2M_f}{\mu+1} \right\}(t+1)^{1-\mu},\ \ \ \beta_{t+1} = \beta_0(t+2)^\delta. \label{definition_Ht}
            \end{align}
	\end{algorithmic}
\end{algorithm}

The convergence analysis of $\Alg$ will be presented in section~\ref{sec4}. For the rest of this section, we present some auxiliary lemmas.
\begin{lemma}   \label{lemma_of_h}
Consider \eqref{problem} and let $(x^*,y^*)$ be a solution to \eqref{problem}. Define $h(x,y)=\frac{1}{2}\|Ax+By-c\rVert^2$.
    Let $\delta \in (0,1)$, $\beta_0 > 0$, $\beta_{t+1}= \beta_0(t+2)^{\delta}$ and $\alpha_t=2/(t+2)$ for all $t \geq 0$. Then for any $x_1$, $x_2\in \mathcal{E}_1$, $y\in\mathcal{E}_2$ and $ t\ge 1 $, it holds that
    \begin{align}
        &(1-\alpha_t)(\beta_{t}-\beta_{t-1})h(x_1,y) + \alpha_t\beta_{t}h(x_1,y) + \alpha_t\beta_{t}\langle Ax_1+By-c,Ax^*-Ax_1\rangle \nonumber \\
        &+ \alpha_t\beta_{t}\langle Ax_2 +By -c, By^* -By\rangle \leq \alpha_t\beta_t \langle Ax_1 -Ax_2, By-By^* \rangle. \nonumber
    \end{align}
\end{lemma}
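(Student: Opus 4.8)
The plan is to reduce the claimed vector inequality to an elementary scalar inequality by peeling off all the terms that appear on both sides. First I would use feasibility of the solution, $Ax^*+By^*=c$, to eliminate $x^*$: writing $r_1:=Ax_1+By-c$ and $r_2:=Ax_2+By-c$, we have $h(x_1,y)=\frac12\|r_1\|^2$, and $Ax^*-Ax_1=c-By^*-Ax_1=-(Ax_1+By^*-c)=-r_1+(By-By^*)$, while the term $Ax_2+By-c$ equals $r_2$. Substituting these into the three terms carrying the coefficient $\alpha_t\beta_t$ and expanding the inner products, one gets $\alpha_t\beta_t\big[\tfrac12\|r_1\|^2-\|r_1\|^2+\langle r_1,By-By^*\rangle+\langle r_2,By^*-By\rangle\big]=\alpha_t\beta_t\big[-\tfrac12\|r_1\|^2+\langle r_1-r_2,By-By^*\rangle\big]=-\alpha_t\beta_t h(x_1,y)+\alpha_t\beta_t\langle Ax_1-Ax_2,By-By^*\rangle$, where I used $Ax_1-Ax_2=r_1-r_2$.

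Plugging this identity back into the left-hand side of the claim, the term $\alpha_t\beta_t\langle Ax_1-Ax_2,By-By^*\rangle$ matches exactly the right-hand side and cancels, so the claimed inequality is equivalent to $(1-\alpha_t)(\beta_t-\beta_{t-1})h(x_1,y)-\alpha_t\beta_t h(x_1,y)\le 0$. Since $h(x_1,y)=\tfrac12\|r_1\|^2\ge 0$, it suffices to prove the scalar inequality $(1-\alpha_t)(\beta_t-\beta_{t-1})\le \alpha_t\beta_t$ for every $t\ge 1$ (the restriction $t\ge1$ is exactly what makes $\beta_{t-1}$ well defined).

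For this last step I would substitute the explicit choices. Note $\beta_t=\beta_0(t+1)^\delta$ holds for all $t\ge 0$ (including $t=0$, since $1^\delta=1$), hence $\beta_t-\beta_{t-1}=\beta_0\big[(t+1)^\delta-t^\delta\big]$ for $t\ge1$, and $1-\alpha_t=t/(t+2)$. After clearing the common positive factor $\beta_0/(t+2)$, the inequality becomes $t\big[(t+1)^\delta-t^\delta\big]\le 2(t+1)^\delta$. This follows from concavity of $s\mapsto s^\delta$ on $(0,\infty)$ (as $\delta\in(0,1)$): by the mean value theorem $(t+1)^\delta-t^\delta=\delta\,\xi^{\delta-1}$ for some $\xi\in(t,t+1)$, and since $\delta-1<0$ this is at most $\delta\,t^{\delta-1}\le t^{\delta-1}$; therefore $t\big[(t+1)^\delta-t^\delta\big]\le t^\delta\le (t+1)^\delta\le 2(t+1)^\delta$.

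I do not expect a genuine obstacle here; the only place demanding care is the bookkeeping in the first step, where one must correctly track which residual ($r_1$ or $r_2$) each inner product produces, so that the cross terms in $By-By^*$ combine into a single $\langle Ax_1-Ax_2,By-By^*\rangle$ term that cancels against the right-hand side, leaving a sign-definite remainder. The scalar estimate $t[(t+1)^\delta-t^\delta]\le 2(t+1)^\delta$ is routine once the concavity bound on $(t+1)^\delta-t^\delta$ is in hand.
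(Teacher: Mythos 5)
Your proof is correct and follows essentially the same route as the paper's: the algebraic identity you derive (after cancelling the $\langle Ax_1-Ax_2,By-By^*\rangle$ term) is exactly the paper's expansion, and your scalar bound $t\bigl[(t+1)^\delta-t^\delta\bigr]\le 2(t+1)^\delta$ via the mean value theorem is the same concavity estimate the paper uses to show $(1-\alpha_t)(\beta_t-\beta_{t-1})\le\alpha_t\beta_t$. The only difference is organizational (you cancel first and then prove the scalar inequality, while the paper proves the scalar inequality first and then expands), which does not change the substance.
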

\begin{proof}
    First, notice that for every $t \geq 1$,\footnote{Notice that $\beta_t = \beta_0 (t+1)^\delta$ for all $t\ge 0$.}
    \begin{align}\label{relationship}
        &(1-\alpha_t)(\beta_{t}-\beta_{t-1}) -\alpha_t \beta_{t} = \frac{t\beta_0}{t+2}\left((t+1)^{\delta}
        -t^{\delta}\right) -\frac{2\beta_0}{t+2}(t+1)^{\delta} \nonumber \\
        & \overset{\mathop{(a)}}{\leq} \frac{t\beta_0}{t+2}\delta t^{\delta -1} -\frac{2\beta_0}{t+2}(t+1)^{\delta}
        \overset{\mathop{(b)}}{\leq} \frac{t^{\delta}\beta_0}{t+2} -\frac{2(t+1)^{\delta} \beta_0}{t+2} \leq -\frac{(t+1)^{\delta}\beta_0}{t+2} \leq 0,
    \end{align}
    where $(a)$ holds because $(\cdot)^{\delta}$ is concave on $\R_+$ and $(b)$ holds
    because $\delta \leq 1$.
    Therefore, we have $(1-\alpha_t)(\beta_{t}-\beta_{t-1})\leq \alpha_t \beta_{t}$.
    Hence,
    \begin{align}
        &(1-\alpha_t)(\beta_{t}-\beta_{t-1})h(x_1,y) + \alpha_t\beta_{t}h(x_1,y) + \alpha_t\beta_{t}\langle Ax_1+By-c,Ax^*-Ax_1\rangle \nonumber \\
        &+ \alpha_t\beta_{t}\langle Ax_2 +By -c, By^* -By\rangle  \nonumber \\
        &\overset{\mathop{(a)}}{\leq} 2\alpha_t\beta_{t} h(x_1,y) + \alpha_t\beta_{t}\langle Ax_1+By-c,Ax^*\!-\!Ax_1\rangle+ \alpha_t\beta_{t}\langle Ax_2 +By -c, By^* -By\rangle \nonumber \\
        &\overset{\mathop{(b)}}{=} \alpha_t\beta_t\|Ax_1+By-(Ax^*+ By^*)\rVert^2 +\alpha_t\beta_{t} \langle Ax_1 +By-(Ax^*+By^*),A(x^*-x_1) \rangle \nonumber\\
        &\ \ \ + \alpha_t\beta_{t}\langle Ax_2 +By-(Ax^*+By^*), B(y^*-y) \rangle \nonumber \\
        &= {\alpha_t\beta_{t}}\|A(x_1-x^*)\rVert^2 + {\alpha_t\beta_{t}}\|B(y-y^*)\rVert^2 +2\alpha_t\beta_{t}\langle Ax_1-Ax^*, By-By^*\rangle \nonumber \\
        & \ \ \ -\alpha_t\beta_{t}\|A(x_1-x^*)\rVert^2-\alpha_t\beta_{t}\|B(y-y^*)\rVert^2 +\alpha_t\beta_{t}\langle B(y-y^*), A(x^*-x_1) \rangle\nonumber\\
        & \ \ \  +\alpha_t\beta_{t}\langle A(x_2-x^*), B(y^*-y) \rangle  \nonumber \\
        &=\alpha_t\beta_t \langle Ax_1 -Ax_2, By-By^* \rangle, \nonumber
    \end{align}
    where $(a)$ holds because of \eqref{relationship}, $(b)$ holds because $Ax^*+ By^* =c$.
\end{proof}
\begin{lemma} \label{lemma_of_sum_Axk-Axk+1}
    Consider \eqref{problem} and suppose that Assumptions~\ref{Assumption 1} and~\ref{Assumption 2} hold. Let $(x^*,y^*)$ be a solution to \eqref{problem}.
    Suppose that  $\{(x^t,y^t)\}$ is generated by $\Alg$.  Then we have for each $k \ge 2$ that
    \begin{align}
        \left| \sum\limits_{t = 1}^{k-1}\alpha_t\beta_{t}(t\!+\!1)(t\!+\!2)\!\left\langle Ax^t\!-\! Ax^{t+1}\!,By^t\!-\!By^* \right\rangle \right|
        \!\leq\! 2^{\delta +3}\beta_0D_2 \!+\!\frac{16+8\delta}{1+\delta}\beta_0 D_2(k+1)^{1+\delta},  \nonumber
    \end{align}
    where $D_2 =\sup\limits_{x \in \dom f, y\in \dom g} \left| \langle Ax,By \rangle \right| < \infty$.
\end{lemma}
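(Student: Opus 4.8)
The plan is to reduce the sum to a form in which Abel's summation formula (Lemma~\ref{lemma_Abel}) applies. First I would simplify the coefficient: since $\alpha_t = 2/(t+2)$ and $\beta_t = \beta_0(t+1)^\delta$ (by \eqref{definition_Ht}), we have $\alpha_t\beta_t(t+1)(t+2) = 2\beta_0(t+1)^{1+\delta}=:c_t$. Hence the quantity to be bounded equals $S := \sum_{t=1}^{k-1}\langle a^t - a^{t+1}, b^t\rangle$ with $a^t := Ax^t$ and $b^t := c_t(By^t - By^*)$. Applying Lemma~\ref{lemma_Abel} gives
\[
S = \langle a^1, b^1\rangle - \langle a^k, b^{k-1}\rangle + \sum_{t=1}^{k-2}\langle a^{t+1}, b^{t+1} - b^t\rangle,
\]
and it remains to estimate the three terms on the right.

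The second ingredient is that every relevant inner product is bounded by $D_2$. Indeed, Assumption~\ref{Assumption 1} makes $\dom f$ and $\dom g$ bounded, so $D_2 < \infty$; moreover, since $f_1,g_1$ are finite-valued, $\dom f=\dom f_2$ and $\dom g=\dom g_2$, and by construction $x^t\in\dom f_2=\dom f$ for $t\ge 1$ (from \eqref{get_xk}), $u^t\in\dom g_2=\dom g$ (from \eqref{get_uk}), each $y^t$ is a convex combination of points of the convex set $\dom g$ hence lies in $\dom g$ (from \eqref{get_yk}), and $y^*\in\dom g$. Thus $|\langle Ax^s, Bw\rangle|\le D_2$ whenever $w\in\{y^t,\,u^t,\,y^*\}$, which immediately yields $|\langle a^1,b^1\rangle|\le 2D_2c_1 = 2^{\delta+3}\beta_0 D_2$ and $|\langle a^k,b^{k-1}\rangle|\le 2D_2c_{k-1}=4\beta_0 D_2 k^{1+\delta}\le 4\beta_0 D_2(k+1)^{1+\delta}$.

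For the remaining sum I would split $b^{t+1}-b^t = c_{t+1}(By^{t+1}-By^t) + (c_{t+1}-c_t)(By^t-By^*)$. The first part is controlled using \eqref{get_yk}, namely $By^{t+1}-By^t = \alpha_t(Bu^t-By^t)$, so $|\langle a^{t+1}, c_{t+1}(By^{t+1}-By^t)\rangle|\le 2\alpha_t c_{t+1}D_2 = 8\beta_0 D_2(t+2)^\delta$; the second part is controlled by the mean value theorem applied to $s\mapsto s^{1+\delta}$ (whose derivative $(1+\delta)s^\delta$ is increasing), giving $0\le c_{t+1}-c_t\le 2(1+\delta)\beta_0(t+2)^\delta$, whence $|\langle a^{t+1},(c_{t+1}-c_t)(By^t-By^*)\rangle|\le 2D_2(c_{t+1}-c_t)\le 4(1+\delta)\beta_0 D_2(t+2)^\delta$. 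Summing over $t=1,\dots,k-2$ and comparing $\sum_{t=1}^{k-2}(t+2)^\delta$ with $\int_3^{k+1}s^\delta\,ds\le (k+1)^{1+\delta}/(1+\delta)$ bounds the last term by $(12+4\delta)\beta_0 D_2(k+1)^{1+\delta}/(1+\delta)$. Adding the three estimates and simplifying $4+(12+4\delta)/(1+\delta) = (16+8\delta)/(1+\delta)$ gives the claimed inequality.

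All steps are elementary, so there is no genuinely hard step; the points requiring care are (i) verifying that every iterate stays in the corresponding domain so that the inner products are legitimately bounded by $D_2$, and (ii) tracking the constants precisely so that the boundary term contributes exactly $2^{\delta+3}\beta_0 D_2$ and the remaining pieces combine into exactly $\frac{16+8\delta}{1+\delta}\beta_0 D_2(k+1)^{1+\delta}$.
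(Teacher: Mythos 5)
Your proposal is correct and follows essentially the same route as the paper: Abel summation with $a^t=Ax^t$, $b^t=2\beta_0(t+1)^{1+\delta}B(y^t-y^*)$, splitting $b^{t+1}-b^t$ into an increment term controlled via \eqref{get_yk} and a coefficient-difference term controlled by $(t+2)^{1+\delta}-(t+1)^{1+\delta}\le(1+\delta)(t+2)^\delta$, then an integral comparison; the constants come out identically. The only cosmetic differences are which factor ($c_t$ versus $c_{t+1}$) you attach to the increment term and your use of the mean value theorem where the paper invokes concavity of $(\cdot)^\delta$.
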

\begin{proof}
    Recall that $\dom f$ and $\dom g$ are bounded by Assumption~\ref{Assumption 1}. Therefore, we have $D_2 < \infty$.
    Next, write $y_e^t = y^t -y^*$ for notational simplicity. Then letting $a^t = Ax^t$ and $b^t = \alpha_t\beta_t(t+1)(t+2)By_e^t$ in \eqref{Abel_formula_vec}, we have for all $k\ge 2$ that
    \begin{align}
        &\left| \sum\limits_{t = 1}^{k-1}\alpha_t\beta_{t}(t+1)(t+2)\left\langle Ax^t- Ax^{t+1},By_e^t \right\rangle \right|   \nonumber \\
        &= \Big|6\alpha_1\beta_1 \left\langle Ax^1, By_e^1 \right\rangle - \alpha_{k-1}\beta_{k-1}k(k+1)\langle Ax^k, By_e^{k-1} \rangle \nonumber \\
       & \ \ \ + \sum_{t=1}^{k-2} \langle Ax^{t+1},  \alpha_{t+1}\beta_{t+1}(t+2)(t+3)By_e^{t+1} - \alpha_{t}\beta_{t}(t+1)(t+2)By_e^t \rangle \Big| \nonumber \\
        &\overset{(a)}{=}\!\Big| 2^{\delta +2}\beta_0\! \left\langle Ax^1, By_e^1 \right\rangle \!-\! 2\beta_0k^{1+\delta}\!\langle Ax^k, By_e^{k-1} \rangle  \!+\! \sum_{t=1}^{k-2}\!2\beta_0(t+1)^{1+\delta}\!\langle Ax^{t+1}, By^{t+1} \!-\! By^t   \rangle  \nonumber \\
       & \ \ \  + \sum_{t=1}^{k-2} \left(2\beta_0(t+2)^{1+\delta}- 2\beta_0(t+1)^{1+\delta}\right)\langle Ax^{t+1},  By_e^{t+1} \rangle \Big|\nonumber \\
       & \leq 2^{\delta +2}\beta_0 \left|\left\langle Ax^1, By_e^1 \right\rangle \right| + 2\beta_0k^{1+\delta}\left|\langle Ax^k, By_e^{k-1} \rangle\right|\nonumber\\
       &\ \ \ + \sum_{t=1}^{k-2}2\beta_0(t+1)^{1+\delta}\!\left|\langle Ax^{t+1}, By^{t+1} - By^t   \rangle\right| \nonumber \\
       & \ \ \  + \sum_{t=1}^{k-2} \left(2\beta_0(t+2)^{1+\delta}- 2\beta_0(t+1)^{1+\delta}\right)\left|\langle Ax^{t+1},  By_e^{t+1} \rangle \right| \nonumber \\
       &\overset{(b)}{\leq}  2^{\delta +3}\beta_0 D_2 + 4\beta_0k^{1+\delta}D_2  + \sum_{t=1}^{k-2}2\beta_0(t+1)^{1+\delta}\left|\langle Ax^{t+1}, B(y^{t+1}-y^t)   \rangle\right| \nonumber \\
       & \ \ \  + \sum_{t=1}^{k-2} \left(4\beta_0(t+2)^{1+\delta}- 4\beta_0(t+1)^{1+\delta}\right)D_2 \nonumber \\
        &\overset{(c)}{\le}  2^{\delta +3}\beta_0 D_2 + 4\beta_0k^{1+\delta}D_2  + \sum_{t=1}^{k-2}2\beta_0\alpha_t(t+1)^{1+\delta}\left|\langle Ax^{t+1}, B(u^t-y^t)   \rangle\right| \nonumber \\
       & \ \ \  +4\beta_0(1+\delta) \sum_{t=1}^{k-2}(t+2)^{\delta} D_2 \nonumber \\
       &\overset{(d)}{\leq}  2^{\delta +3}\beta_0 D_2 + 4\beta_0k^{1+\delta}D_2 +8\beta_0 \sum_{t=1}^{k-2}(t+2)^{\delta}D_2   +4\beta_0(1+\delta) \sum_{t=1}^{k-2}(t+2)^{\delta} D_2  \nonumber \\
        &\overset{(e)}{\leq}   2^{\delta +3}\beta_0D_2 + 4\beta_0 k^{1+\delta}D_2  + 8\beta_0D_2 \int_0^{k+1} x^{\delta} dx+4\beta_0(1+\delta)D_2\int_0^{k+1} x^{\delta} dx
        \nonumber \\
        &=  2^{\delta +3}\beta_0D_2+ \frac{8\beta_0}{1+\delta}D_2 (k+ 1)^{1+\delta}  + 4\beta_0 k^{1+\delta}D_2 +4\beta_0(k+1)^{1+\delta}D_2
        \nonumber \\
        &\leq   2^{\delta +3}\beta_0D_2 + 8\beta_0(k+1)^{1+\delta}D_2 + \frac{8\beta_0}{1+\delta}D_2 (k+1)^{1+\delta} \nonumber \\
         &= 2^{\delta+ 3}\beta_0D_2 +\frac{(16+8\delta)\beta_0}{1+\delta}D_2(k+1)^{1+\delta}, \nonumber
    \end{align}
    where $(a)$ holds because $\alpha_t = 2/(t+2)$ and $\beta_t = \beta_0(t+1)^{\delta}$, $(b)$ holds due to the definition of $D_2$ and hence $\left|\langle Ax^1, By_e^1 \rangle\right| = |\langle Ax^1, B(y^1-y^*) \rangle| \leq 2D_2$, $\left|\langle Ax^k, By_e^{k-1} \rangle\right| \leq 2D_2$ and $\left|\langle Ax^{t+1},  By_e^{t+1} \rangle \right| \leq 2D_2$, $(c)$ follows from \eqref{get_yk} and the convexity of $(\cdot)^{1+\delta}$ on $\R_+$, $(d)$ holds because $\alpha_t (t+1)^{1+\delta} \leq \frac{2}{t+2}(t+2)^{1+\delta} =2(t+2)^{\delta}$ and $\left|\langle Ax^{t+1}, B(u^t-y^t)   \rangle\right|  \leq 2D_2$, $(e)$ holds because $(\cdot)^{\delta}$ is increasing on $\R_+$.
\end{proof}

\section{Convergence analysis}\label{sec4}
In this section, we will first establish the global convergence of $\Alg$ by explicitly deriving its iteration complexity, and then study the local convergence rate of $\Alg$ based on the KL property and exponents.

\subsection{Global convergence and iteration complexity} \label{section_convergence_rate}
The main theorem in this subsection concerns the iteration complexity of $\Alg$ in terms of objective value deviations and feasibility violations, which is presented as Theorem~\ref{rate} below.

\begin{theorem}[Global convergence and complexity] \label{rate}
      Consider \eqref{problem}. Suppose that  Assumptions~\ref{Assumption 1}, \ref{Assumption 2} and the {\bf CQ} in Definition~\ref{CQ} hold. Let $(x^*, y^*)$ be a solution to \eqref{problem} and $\bar{\lambda}$ be defined in Lemma~\ref{CQ}.
      Suppose that $\{(x^t,y^t)\}$ is generated by $\Alg$. Then for all $t \ge 2$,
      $$\left|f(x^t)+ g(y^t) -f(x^*)-g(y^*) \right|
        \leq  \max\left\{ \tau_t, \|\bar\lambda\|\cdot{\cal G}_t \right\},$$
        $$
        \|Ax^t+By^t-c\rVert \leq {\cal G}_t,
        $$
        where
        \begin{equation}\label{taut}
          \tau_t= \begin{cases}
                    \displaystyle\frac{\omega_1}{t(t+1)}+\frac{\omega_2}{(t+1)^{1-\delta}}+\frac{\omega_3}{(t+1)^{\nu}}+ \frac{\omega_4}{(t+1)^{\mu}} &\ {\rm if}\ \mu \in (0,1), \vspace{0.1 cm}\\
                    \displaystyle\frac{\omega_1}{t(t+1)}+\frac{\omega_2}{(t+1)^{1-\delta}}+\frac{\omega_3}{(t+1)^{\nu}} +\frac{\omega_5}{t+1} &\ {\rm if}\ \mu =1,
                  \end{cases}
        \end{equation}
        \begin{equation}\label{Gt}
        {\cal G}_t = \frac{\|\bar{\lambda}\rVert}{\beta_0t^{\delta}} + \sqrt{\frac{\|\bar{\lambda}\rVert^2}{\beta_0^2t^{2\delta}}+\frac{2\tau_t}{\beta_0t^{\delta}}},
        \end{equation}
        \begin{align}
      &\omega_1 = 2^{\delta +3}\beta_0D_2 + \vartheta,\ \ \omega_2= 2\lambda_{A} D_f^2\beta_0+ 2\lambda_{B} D_g^2\beta_0+\frac{32+16\delta}{1+\delta}D_2\beta_0,\label{w1w2w3}\\
      &\omega_3 = \frac{2^{\nu+1}}{\nu+1}M_gD_g^{\nu+1},\ \ \omega_4 = 2\widetilde{H}_0D_f^2 + 2\omega_0,\ \  \omega_5 = 2\widetilde{H}_0D_f^2,\label{w3w4}\\
      &{with}\ \
    \omega_0 = 4\widetilde{H}_0\left(\frac{2M_f}{(1+\mu)\widetilde{H}_0}\right)^{\frac{2 }{1-\mu}}\ { and}\ \
    \widetilde{H}_0 = \max\left\{H_0, \frac{2M_f}{\mu+1} \right\},\label{w4w0H0}\\
    & \ \ \ \ \ \ \ \vartheta = 2(f(x^1) + g(y^1) + (\beta_0/2)\|Ax^1 + By^1-c\|^2 - f(x^*) - g(y^*)),
    \end{align}
    $M_f$, $M_g$, $\mu$ and $\nu$ are given in Assumption \ref{Assumption 2},
    $D_f$ and $D_g$ are defined in \eqref{Definition_of_diameter},
    $\lambda_{A} =\lambda_{\max}(A^*A)$, $\lambda_{B} = \lambda_{\max}(B^*B)$ and $D_2 = \sup\limits_{x \in \dom f, y\in \dom g}\left| \langle Ax,By\rangle \right| < \infty$.\footnote{Note that $D_2 < \infty$ thanks to the boundedness of $\dom f$ and $\dom g$.}
    \end{theorem}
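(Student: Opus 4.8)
The plan is to extract a one-step estimate from each of the two blocks of one iteration of $\Alg$, combine them into a single recursion that telescopes after multiplying by a well-chosen weight, deduce the bound $\widetilde F_{\beta_{t-1}}(x^t,y^t)-f(x^*)-g(y^*)\le\tau_t$, and finally convert this into the asserted estimates on the objective-value deviation and the feasibility violation using the multiplier $\bar\lambda$ from Lemma~\ref{optimal}. Throughout, Assumption~\ref{Assumption 1} is what keeps the iterate-dependent quantities $\|x^t-x^*\|$, $\|y^t-y^*\|$, $\|u^t-y^t\|$, $\|Ax^t+By^t-c\|$ and $|\langle Ax^s,By^t\rangle|$ under control, by $D_f$, $D_g$, $D_g$, a constant, and $D_2$ respectively.

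The first building block is a one-step estimate for the proximal-gradient step \eqref{get_xk}. Since $x^{t+1}$ minimizes an $(H_t+\lambda_A\beta_t)$-strongly convex function, comparing its value at $x^{t+1}$ with its values at $x^t$ and at $x^*$, using convexity and the H\"older inequality \eqref{Holder_property} for $x\mapsto f_1(x)+\tfrac{\beta_t}{2}\|Ax+By^t-c\|^2$ (whose gradient is the sum of an $M_f$-H\"older and a $\lambda_A\beta_t$-Lipschitz term), and absorbing $\tfrac{M_f}{\mu+1}\|\cdot\|^{\mu+1}$ into $\tfrac{H_t}{2}\|\cdot\|^2$ at the price of an additive term $c_t$, I would obtain both a ``descent toward $x^t$'' inequality $\widetilde F_{\beta_t}(x^{t+1},y^t)\le\widetilde F_{\beta_t}(x^t,y^t)+c_t$ and a ``comparison with $x^*$'' inequality controlling $f(x^{t+1})-f(x^*)$ by $\beta_t\langle Ax^{t+1}+By^t-c,\,Ax^*-Ax^{t+1}\rangle+\tfrac{\beta_t}{2}\|A(x^{t+1}-x^*)\|^2+\tfrac{H_t+\lambda_A\beta_t}{2}(\|x^t-x^*\|^2-\|x^{t+1}-x^*\|^2)+c_t$; since $H_t=\widetilde H_0 t^{1-\mu}$, a one-line maximization gives $c_t=O(t^{-(1+\mu)})$ when $\mu<1$ and $c_t=0$ when $\mu=1$ (then $\widetilde H_0\ge M_f$). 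The second building block is a Frank--Wolfe-type estimate for the conditional-gradient step \eqref{get_uk}--\eqref{get_yk}: since $y\mapsto\tfrac{\beta_t}{2}\|Ax^{t+1}+By-c\|^2$ is quadratic, $y^{t+1}=(1-\alpha_t)y^t+\alpha_t u^t$, $u^t$ solves the linear oracle, $g_2$ is convex, and $\nabla g_1$ is H\"older continuous, one gets $\widetilde F_{\beta_t}(x^{t+1},y^{t+1})\le(1-\alpha_t)\widetilde F_{\beta_t}(x^{t+1},y^t)+\alpha_t\widetilde F_{\beta_t}(x^{t+1},y^*)+E_t$ with $E_t=\tfrac{M_gD_g^{\nu+1}}{\nu+1}\alpha_t^{\nu+1}+\tfrac{\lambda_BD_g^2}{2}\beta_t\alpha_t^2=O(t^{-(1+\nu)})+O(t^{\delta-2})$; moreover $Ax^*+By^*=c$ gives $\widetilde F_{\beta_t}(x^{t+1},y^*)=f(x^{t+1})+g(y^*)+\tfrac{\beta_t}{2}\|A(x^{t+1}-x^*)\|^2$.

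Next comes the recursion. Write $\Gamma_t:=\widetilde F_{\beta_{t-1}}(x^t,y^t)-f(x^*)-g(y^*)$, with $\beta_{-1}:=\beta_0$, so that $\Gamma_1=\vartheta/2$. I would feed the two prox estimates into the Frank--Wolfe estimate — the descent inequality together with $\widetilde F_{\beta_t}(x^t,y^t)=\widetilde F_{\beta_{t-1}}(x^t,y^t)+(\beta_t-\beta_{t-1})h(x^t,y^t)$ to treat $(1-\alpha_t)\widetilde F_{\beta_t}(x^{t+1},y^t)$, and the comparison inequality to treat $\alpha_t\widetilde F_{\beta_t}(x^{t+1},y^*)$ — and then invoke Lemma~\ref{lemma_of_h} (with $x_1=x^{t+1}$, $x_2=x^*$, $y=y^t$) to funnel the penalty increment $(1-\alpha_t)(\beta_t-\beta_{t-1})h(x^t,y^t)$ and the residual feasibility terms $\beta_t\langle Ax^{t+1}+By^t-c,Ax^*-Ax^{t+1}\rangle$, $\beta_t\|A(x^{t+1}-x^*)\|^2$, $\beta_t\|B(y^t-y^*)\|^2$ into the single cross term $\alpha_t\beta_t\langle Ax^t-Ax^{t+1},By^t-By^*\rangle$. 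This yields a recursion of the form $\Gamma_{t+1}\le(1-\alpha_t)\Gamma_t+\alpha_t\beta_t\langle Ax^t-Ax^{t+1},By^t-By^*\rangle+\tfrac{\alpha_t(H_t+\lambda_A\beta_t)}{2}(\|x^t-x^*\|^2-\|x^{t+1}-x^*\|^2)+c_t+E_t$ for $t\ge1$. Multiplying the iteration-$t$ inequality by $w_{t+1}$ with $w_t:=t(t+1)$ — calibrated so that $w_{t+1}(1-\alpha_t)=w_t$ exactly, since $\alpha_t=2/(t+2)$ — and summing for $t=1,\dots,k-1$ telescopes the first term, leaving $w_k\Gamma_k$ bounded by $w_1\Gamma_1$ plus three sums. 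The sum $\sum_{t=1}^{k-1}\alpha_t\beta_t(t+1)(t+2)\langle Ax^t-Ax^{t+1},By^t-By^*\rangle$ is exactly the quantity bounded in Lemma~\ref{lemma_of_sum_Axk-Axk+1} by $2^{\delta+3}\beta_0D_2+\tfrac{16+8\delta}{1+\delta}\beta_0D_2(k+1)^{1+\delta}$; the telescoping sum $\sum_{t=1}^{k-1}\tfrac{w_{t+1}\alpha_t(H_t+\lambda_A\beta_t)}{2}(\|x^t-x^*\|^2-\|x^{t+1}-x^*\|^2)$ has slowly varying coefficient $\tfrac{w_{t+1}\alpha_t(H_t+\lambda_A\beta_t)}{2}=O(\widetilde H_0 t^{2-\mu}+\lambda_A\beta_0 t^{1+\delta})$, so Abel's summation (cf. Lemma~\ref{lemma_Abel}, scalar $b^t=\|x^t-x^*\|^2\le D_f^2$) bounds it by $O(\widetilde H_0 D_f^2 k^{2-\mu}+\lambda_A\beta_0 D_f^2 k^{1+\delta})$; and $\sum_{t=1}^{k-1}w_{t+1}(c_t+E_t)=O(k^{2-\mu}+k^{2-\nu}+k^{1+\delta})$. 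Dividing by $w_k=k(k+1)$ and collecting powers of $k$ reproduces exactly the four terms $\tfrac{\omega_1}{t(t+1)},\tfrac{\omega_2}{(t+1)^{1-\delta}},\tfrac{\omega_3}{(t+1)^\nu},\tfrac{\omega_4}{(t+1)^\mu}$ of $\tau_t$ in \eqref{taut} (with $\tfrac{\omega_5}{t+1}$ replacing $\tfrac{\omega_4}{(t+1)^\mu}$ when $\mu=1$, since then $c_t\equiv0$ and $H_t\equiv\widetilde H_0$) and the displayed constants $\omega_1,\dots,\omega_5$; hence $\Gamma_t\le\tau_t$ for all $t\ge2$.

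Finally, the conversion, and a word on the obstacle. Set $R^t:=Ax^t+By^t-c$. From $\Gamma_t\le\tau_t$, that is $f(x^t)+g(y^t)+\tfrac{\beta_{t-1}}{2}\|R^t\|^2-f(x^*)-g(y^*)\le\tau_t$, we read off $f(x^t)+g(y^t)-f(x^*)-g(y^*)\le\tau_t$. Lemma~\ref{optimal} gives $\bar\lambda$ with $(x^*,y^*)$ minimizing $(x,y)\mapsto f(x)+g(y)+\langle\bar\lambda,Ax+By-c\rangle$, so $f(x^t)+g(y^t)-f(x^*)-g(y^*)\ge-\langle\bar\lambda,R^t\rangle\ge-\|\bar\lambda\|\,\|R^t\|$; substituting this into $\Gamma_t\le\tau_t$ gives $\tfrac{\beta_{t-1}}{2}\|R^t\|^2-\|\bar\lambda\|\,\|R^t\|-\tau_t\le0$, and solving this quadratic inequality for $\|R^t\|\ge0$ with $\beta_{t-1}=\beta_0 t^\delta$ yields $\|R^t\|\le\tfrac{\|\bar\lambda\|}{\beta_0 t^\delta}+\sqrt{\tfrac{\|\bar\lambda\|^2}{\beta_0^2 t^{2\delta}}+\tfrac{2\tau_t}{\beta_0 t^\delta}}={\cal G}_t$, the second asserted bound; and $f(x^t)+g(y^t)-f(x^*)-g(y^*)\ge-\|\bar\lambda\|\,\|R^t\|\ge-\|\bar\lambda\|{\cal G}_t$ combined with the upper bound $\le\tau_t$ gives $|f(x^t)+g(y^t)-f(x^*)-g(y^*)|\le\max\{\tau_t,\|\bar\lambda\|{\cal G}_t\}$. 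The hard part is the recursion step: choosing the Lyapunov quantity $\Gamma_t$ and arranging the combination so that every $\beta_t$-scaled term not already carrying a sufficiently high power of $\alpha_t$ — the residual feasibility terms and the penalty increment — is either telescoped or absorbed, through Lemma~\ref{lemma_of_h}, into the lone cross term that Lemma~\ref{lemma_of_sum_Axk-Axk+1} controls; by contrast, the bookkeeping of $\omega_1,\dots,\omega_5$ (including the H\"older-remainder $\omega_0$ feeding $\omega_4$ and the Abel remainder feeding the $\lambda_AD_f^2$ part of $\omega_2$) is routine.
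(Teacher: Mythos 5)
Your overall architecture is the paper's: a one-step estimate for the prox-gradient block, a Frank--Wolfe estimate for the conditional-gradient block, a combination via Lemma~\ref{lemma_of_h} into a recursion whose cross term is controlled by Lemma~\ref{lemma_of_sum_Axk-Axk+1}, telescoping after multiplying by $t(t+1)$ (so that $(t+1)(t+2)(1-\alpha_t)=t(t+1)$), the $\zeta_t$-type treatment of the H\"older remainder, and finally the conversion through the multiplier $\bar\lambda$, which you carry out exactly as in Lemma~\ref{lemma_Ax+By-c}. Those parts are sound.

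The genuine problem is the combination step, which is also the step you yourself identify as the hard part. As written, your invocation of Lemma~\ref{lemma_of_h} with $x_1=x^{t+1}$, $x_2=x^*$, $y=y^t$ does not do what you claim: with that instantiation the lemma's left-hand side contains $h(x^{t+1},y^t)$ (not the penalty increment $h(x^t,y^t)$ you need to absorb) and the term $\langle Ax^*+By^t-c,\,By^*-By^t\rangle$ (not the CG residual $\langle Ax^{t+1}+By^t-c,\,By^*-By^t\rangle$ that actually appears), and its right-hand side is $\alpha_t\beta_t\langle Ax^{t+1}-Ax^*,\,By^t-By^*\rangle$, not $\alpha_t\beta_t\langle Ax^t-Ax^{t+1},\,By^t-By^*\rangle$. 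This is not a cosmetic mismatch: the entire rate hinges on the cross term being a \emph{consecutive difference} $A(x^t-x^{t+1})$, since only then does Abel summation (Lemma~\ref{lemma_Abel}, as exploited in Lemma~\ref{lemma_of_sum_Axk-Axk+1}) reduce $\sum_{t=1}^{k-1}\alpha_t\beta_t(t+1)(t+2)\langle\cdot,\cdot\rangle$ from the naive $\mathcal{O}(k^{2+\delta})$ to $\mathcal{O}(k^{1+\delta})$. A term like $\langle Ax^{t+1}-Ax^*,By^t-By^*\rangle$ is merely $\mathcal{O}(1)$ with no cancellation, so its weighted sum is $\mathcal{O}(k^{2+\delta})$ and, after dividing by $k(k+1)$, leaves an $\mathcal{O}(k^{\delta})$ remainder that destroys the bound. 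The fix is the paper's bookkeeping: in the prox step, test the subproblem at the convex combination $x^t+\alpha_t(x^*-x^t)$ so that the $x$-block feasibility residual stays linearized at $R^t=Ax^t+By^t-c$ with weight $\alpha_t\beta_t$ and coefficient $\langle A^*R^t,x^*-x^t\rangle$, keep the CG residual at $\widetilde R^t=Ax^{t+1}+By^t-c$, and apply Lemma~\ref{lemma_of_h} with $x_1=x^t$, $x_2=x^{t+1}$, $y=y^t$; this is precisely what produces $\alpha_t\beta_t\langle Ax^t-Ax^{t+1},By^t-By^*\rangle$. (Your three-point/telescoping variant for the $\|x^t-x^*\|^2$ terms can be made to work and gives the same orders, but the paper instead bounds $\tfrac{H_t+\beta_t\lambda_A}{2}\alpha_t^2\|x^t-x^*\|^2$ directly by $D_f^2$ times the coefficient, and the stated constants $\omega_2,\omega_4,\omega_5$ come from that direct bound, so your variant would not literally reproduce them.)
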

\begin{proof}
    \add{Using Lemma~\ref{theorem_f+beta_h}, we have the following inequality for all $t\ge 2$,
    \begin{align}\label{hahaheheha}
        &f(x^{t}) + g(y^{t}) +\frac{\beta_{t-1}}{2}\|Ax^t+By^t-c\rVert^2 - f(x^*) -g(y^*) \le \tau_t,
    \end{align}
     where $\tau_t$ is defined in \eqref{taut}. In the remainder of the proof,
     we will discuss how the bounds on objective value deviations and feasibility violations along the sequence generated by $\Alg$ can be deduced from the above display.}

     \add{The argument is analogous to that of \cite[Theorem 2]{yu2020rc}. First, we invoke Lemma~\ref{optimal} to conclude that there exist $\xi_1 \in \partial f(x^*)$, $\xi_2\in \partial g(y^*)$ such that
    $0 = \xi_1 + A^*\bar{\lambda}$ and $0= \xi_2+B^*\bar{\lambda}$.
    Using this, we can obtain that
    \begin{align}
        0&= \langle \xi_1+A^*\bar{\lambda}, x^t -x^*\rangle+ \langle \xi_2+B^*\bar{\lambda}, y^t -y^*\rangle \nonumber \\
        & =\langle \xi_1, x^t -x^* \rangle +\langle \xi_2, y^t -y^*\rangle +\langle \bar{\lambda}, A(x^t -x^*) \rangle +\langle \bar{\lambda}, B(y^t -y^*) \rangle \nonumber \\
        &\overset{\mathop{(a)}}{\leq} f(x^t) -f(x^*) + g(y^t) -g(y^*) + \langle \bar{\lambda}, Ax^t +By^t -c\rangle, \nonumber
    \end{align}
    where $(a)$ holds because $f$ and $g$ are convex
    and $Ax^* + By^* = c$.}

    \add{From the above inequality, we deduce that
    \begin{align}
       -\|\bar{\lambda}\rVert \cdot\|Ax^t\! +\! By^t -c\rVert\leq -\langle \bar{\lambda}, Ax^t \!+\!By^t -c\rangle \leq f(x^t)\! -\!f(x^*) \!+\! g(y^t) \!-\!g(y^*). \label{F-F^*}
    \end{align}
    Using the above display, we can deduce further from \eqref{hahaheheha} that
    \begin{align}
        0&\leq f(x^t) -f(x^*) + g(y^t) -g(y^*) + \|\bar{\lambda}\rVert \cdot \|Ax^t +By^t -c\rVert \nonumber \\
        & \leq -\frac{\beta_{t-1}}{2}\|Ax^t +By^t -c\|^2 + \tau_t +\|\bar{\lambda}\rVert\|Ax^t +By^t -c\|. \nonumber
    \end{align}
    Solving this inequality for $\|Ax^t +By^t -c\|$, we have that
    \begin{align}
        \|Ax^t +By^t -c\rVert \leq \frac{\|\bar{\lambda}\rVert+\sqrt{\|\bar{\lambda}\rVert^2+2\beta_{t-1} \tau_t}}{\beta_{t-1}}
        =\frac{\|\bar{\lambda}\rVert}{\beta_{t-1}}+\sqrt{\frac{\|\bar{\lambda}\rVert^2}{\beta_{t-1}^2}+\frac{2\tau_t}{\beta_{t-1}}}. \label{Axk-Byk-c}
    \end{align}}

    \add{Finally, based on \eqref{F-F^*} and \eqref{Axk-Byk-c}, we obtain that
    \begin{align}
        f(x^t) + g(y^t) -f(x^*)- g(y^*) \geq -\frac{\|\bar{\lambda}\rVert^2}{\beta_{t-1}}- \|\bar{\lambda}\rVert\sqrt{\frac{\|\bar{\lambda}\rVert^2}{\beta_{t-1}^2}+\frac{2\tau_t}{\beta_{t-1}}}.
        \label{f+g-f*-g*_geq}
    \end{align}
    On the other hand, notice that \eqref{hahaheheha} implies $f(x^t)+ g(y^t) -f(x^*)-g(y^*)\leq \tau_t$.
    Combining this inequality with \eqref{Axk-Byk-c} and \eqref{f+g-f*-g*_geq} and noting that $\beta_{t-1} = \beta_0t^{\delta}$, we obtain the desired result.}
\end{proof}



    \begin{remark}[Asymptotic bounds] \label{remark_rate}
    In both cases {\rm (i)} and {\rm (ii)} of Theorem~\ref{rate}, one can show that
        $\tau_t = \mathcal{O} \left( \max\{(t+1)^{-(1-\delta)}, (t+1)^{-\nu}, (t+1)^{-\mu} \}\right)$. Let $\varpi_1 = \min \left\{ 1-\delta, \nu, \mu  \right\}$ and $\varpi_2 = \min \left\{ \delta,  \frac{1}{2}, \frac{\nu+ \delta}{2}, \frac{\mu+\delta}{2}  \right\}$. Then, by Theorem~\ref{rate},  we obtain
        $$\left|f(x^t)+ g(y^t) -f(x^*)-g(y^*) \right| = \mathcal{O}\left( (t+1)^{-\min\{\varpi_1,\varpi_2\}}  \right)$$
        and
        $$\|Ax^t+ By^t- c\| =\mathcal{O}\left( (t+1)^{-\varpi_2}\right).$$

    \end{remark}

\begin{remark}[Choosing $\delta$] \label{choose_delta}
We discuss how the parameter $\delta \in (0,1)$  in $\Alg$ can be chosen based on Remark \ref{remark_rate}. Intuitively, we should choose a suitable $\delta$ to ``balance" the values of $\varpi_1$ and $\varpi_2$ defined in Remark \ref{remark_rate}. We now present our suggested choice of $\delta$ according to the range of values of $\min \{\mu,\nu\}$.

\textbf{Case 1: } If  $\min \{ \mu, \nu \} \geq 0.5$, we can choose $\delta = 0.5$. Then we have
$$\varpi_1 = \min \left\{ 1-\delta, \nu, \mu\right\} = \min \left\{ 0.5, \nu, \mu  \right\} =\min \left\{ 0.5, \min \{ \mu, \nu \}  \right\} = 0.5,$$
and
$$\varpi_2 = \min \left\{ \delta,  0.5,  (\min \{ \mu, \nu \}+\delta)/2  \right\} = 0.5.$$
In this case, we have
$$
\left|f(x^t)+ g(y^t) -f(x^*)-g(y^*) \right| \!=\! \mathcal{O} \left( (t+1)^{-1/2} \right), ~\|Ax^t+ By^t- c\| \!=\!\mathcal{O}\left( (t+1)^{-1/2} \right).
$$

\textbf{Case 2: } If  $\min \{ \mu, \nu \} < 0.5$, we can choose  $\delta = 1-\min \{ \mu, \nu \} $. Then,
$$\varpi_1  \!=\!\min \left\{ 1-\delta, \min \{ \mu, \nu \}  \right\} = \min \{ \mu, \nu \}, ~\varpi_2 \!=\! \min \left\{ \delta,  0.5,  (\min \{ \mu, \nu \}+\delta)/2  \right\} = 0.5.$$
In this case, we have
$$
\left|f(x^t)\!+\! g(y^t) \!-\!f(x^*)\!-\!g(y^*) \right| \!=\! \mathcal{O} \!\left(\! (t+1)^{-\min \{ \mu, \nu \}} \!\right)\!, ~\|Ax^t+ By^t- c\| \!=\!\mathcal{O}\!\left(\!(t+1)^{-1/2}\!\right)\!.
$$

\end{remark}

\subsection{KL property and local convergence rate to the solution set} \label{section_KL property}

In this subsection, we study the local convergence rate of the sequence generated by $\Alg$ to the solution set.
We first show that, under some structural assumptions on $f$ and $g$ in \eqref{problem} and an assumption on the KL property of the extended objective of \eqref{problem} (i.e., the sum of the objective and the indicator function of the constraint set), the distance to the set of minimizers can be related to objective value deviation (i.e., $f(x) + g(y) - \inf_{Ax + By = c}\{f(x) + g(y)\}$) and feasibility violation (i.e., $\|Ax+By-c\|$). This together with Theorem~\ref{rate} will allow us to derive an explicit asymptotic convergence rate as a corollary.

We now present our theorem concerning bounds on the distance to the set of minimizers.
\begin{theorem}[Bounding the distance to minimizers] \label{Theorem_of_KL}
    Let $h: {\cal E}_1 \rightarrow (-\infty,\infty]$ be a proper closed convex function, $G: {\cal E}_1 \rightarrow {\cal E}$ be a linear map and $b \in G \, {\rm ri} \dom h$.
    Suppose further that $h(x) = h_0(x) + \delta_{\Theta}(x)$, where $\Theta$ is a compact convex set and $h_0$ is a real-valued convex function.
    Let $H(x) = h(x) +\delta_{\{b\}}(Gx) $. If $H$ is a KL function  with exponent $\alpha\in [0,1)$, then there exist $\epsilon>0$, $c_0>0$ and $\eta>0$ such that
    \begin{align}\label{Fineq}
        \dist\left(x, \Argmin H\right) \leq c_0\left|h(x)+\eta\|Gx-b\|-\inf H\right|^{1-\alpha}
    \end{align}
    whenever $\dist(x,\Argmin H) \le \epsilon$.
\end{theorem}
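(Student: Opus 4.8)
The plan is to reduce \eqref{Fineq} to two local error bounds — one for feasibility and one coming from the KL assumption — and then patch them together. First, note it suffices to prove \eqref{Fineq} for $x\in\Theta$: if $x\notin\Theta$ then $h(x)=+\infty$, so the right-hand side is $+\infty$ and the inequality is vacuous. For $x\in\Theta$ we have $h(x)=h_0(x)$, and since $h_0$ is real-valued convex it is locally Lipschitz, hence Lipschitz on the compact set $\Theta$ with some constant $L\ge 0$. Write $\Omega:=\Theta\cap G^{-1}(b)=\dom H$; by the assumption $b\in G\,{\rm ri}\,\dom h$ we have ${\rm ri}\,\Theta\cap G^{-1}(b)\neq\emptyset$, so $\Omega$ is a nonempty compact convex set, $H$ attains its infimum, and $\Argmin H\subseteq\dom\partial H$ is a nonempty compact set on which $H\equiv\inf H$.

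Next I would record the two ingredients. \textbf{(a) Feasibility error bound.} Since $G^{-1}(b)$ is an affine subspace meeting ${\rm ri}\,\Theta$, the pair $\{\Theta,G^{-1}(b)\}$ is boundedly linearly regular; restricting to the bounded set $\Theta$ and using that the distance to an affine subspace is bounded by a multiple of the residual, we get a constant $\kappa>0$ with $\dist(x,\Omega)\le\kappa\|Gx-b\|$ for all $x\in\Theta$. For such $x$ set $\bar x:=P_\Omega(x)$, so $\|x-\bar x\|\le\kappa\|Gx-b\|$ and $H(\bar x)=h_0(\bar x)\ge\inf H$. \textbf{(b) KL error bound.} Because $H$ is a proper closed convex KL function with exponent $\alpha$, the KL inequality holds (with $\phi(s)=\rho s^{1-\alpha}$) at every point of the compact set $\Argmin H$; by the standard equivalence, for convex functions, between this KL inequality and a local error bound, together with a finite-cover (uniformization) argument over $\Argmin H$ (see, e.g., \cite{li2018calculus}), there exist $c_1>0$ and $\epsilon_1>0$ such that $\dist(z,\Argmin H)\le c_1\,(H(z)-\inf H)^{1-\alpha}$ whenever $z\in\Omega$ and $\dist(z,\Argmin H)\le\epsilon_1$.

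Finally, I would combine these. Fix $\eta:=L\kappa+1$ and, for $x\in\Theta$, put $q(x):=h(x)+\eta\|Gx-b\|-\inf H=h_0(x)-\inf H+\eta\|Gx-b\|$. Two estimates do the work: (i) Lipschitzness gives $h_0(x)\ge h_0(\bar x)-L\|x-\bar x\|\ge\inf H-L\kappa\|Gx-b\|$, so $q(x)\ge\|Gx-b\|\ge 0$ — hence the absolute value in \eqref{Fineq} is superfluous — and since $q$ is bounded above by some $M\ge 0$ on the compact set $\Theta$ this yields $\|Gx-b\|\le q(x)\le M^\alpha\,q(x)^{1-\alpha}$; (ii) again by Lipschitzness, $H(\bar x)-\inf H=h_0(\bar x)-\inf H\le h_0(x)-\inf H+L\|x-\bar x\|\le q(x)$. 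Now pick $\epsilon\in(0,\epsilon_1]$ small enough that $\dist(x,\Argmin H)\le\epsilon$ forces $\dist(\bar x,\Argmin H)\le\epsilon_1$ (possible because $\|Gx-b\|=\|G(x-z^*)\|\le\|G\|\,\dist(x,\Argmin H)$ for the nearest $z^*\in\Argmin H$, so both $\|x-\bar x\|$ and $\dist(\bar x,\Argmin H)$ are controlled by $\epsilon$). Then the triangle inequality together with (a), (b), (i) and (ii) gives
\[
\dist(x,\Argmin H)\le\|x-\bar x\|+\dist(\bar x,\Argmin H)\le\kappa\|Gx-b\|+c_1\big(H(\bar x)-\inf H\big)^{1-\alpha}\le\big(\kappa M^\alpha+c_1\big)\,q(x)^{1-\alpha},
\]
which is \eqref{Fineq} with $c_0:=\kappa M^\alpha+c_1$ and $\eta=L\kappa+1$. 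The main obstacle I anticipate is the bookkeeping in this last step: obtaining the feasibility error bound with a constant uniform over $\Theta$, and — more delicately — arranging the estimates so that the single additive quantity $h_0(x)+\eta\|Gx-b\|$ (not two separate terms) ends up inside one power $(\cdot)^{1-\alpha}$; the KL-to-error-bound passage, while technical, is by now standard.
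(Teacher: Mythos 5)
Your proposal is correct and follows essentially the same route as the paper's proof: a feasibility error bound for $\Theta\cap G^{-1}\{b\}$ via bounded linear regularity and Hoffman's bound, the KL-to-Hölderian-error-bound passage on $\dom\partial H$, Lipschitz continuity of $h_0$ on the compact set $\Theta$, and a triangle inequality through $P_{\Theta\cap G^{-1}\{b\}}(x)$. The only (cosmetic) difference is in merging the two resulting terms into a single $(1-\alpha)$-power: you use the boundedness of $q$ on $\Theta$ to write $\|Gx-b\|\le M^{\alpha}q(x)^{1-\alpha}$, whereas the paper first bounds $\dist(x,\Theta\cap G^{-1}\{b\})\le \dist(x,\Theta\cap G^{-1}\{b\})^{1-\alpha}$ (using $\epsilon\le 1$) and then applies $a^{1-\alpha}+b^{1-\alpha}\le 2^{\alpha}(a+b)^{1-\alpha}$.
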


    \begin{proof}
        We start by establishing four auxiliary facts. First, since $b \in G \, {\rm ri} \dom h = G\,{\rm ri}\,\Theta$, using  \cite[Corollary 3]{bauschke1999strong} and the compactness of $\Theta$, we conclude that there exists a $\kappa>0$ such that
        \begin{align}
            \mathrm{dist}(x,\Theta \cap G^{-1}\{b\}) \leq \kappa \mathrm{dist}(x, G^{-1}\{b\})\ \ \ \forall x\in \Theta.
            \label{corollary_bauschke}
        \end{align}

        Second, notice that $H(x) = h(x)+ \delta_{\{b\}}(Gx) = h_0(x) +\delta_{\Theta \cap G^{-1}\{b\}}(x)$ is level-bounded thanks to the compactness of $\Theta$. Consequently, $\Argmin H$ is nonempty, closed and convex. Moreover, for any $x$, it holds that
        \begin{align}\label{PCDineq}
        &{\rm dist}(P_{\Theta\cap G^{-1}\{b\}}(x),\Argmin H)\le \|P_{\Theta\cap G^{-1}\{b\}}(x) - P_{\Argmin H}(x)\|\nonumber\\
        & \overset{(a)}= \|P_{\Theta\cap G^{-1}\{b\}}(x) - P_{\Theta\cap G^{-1}\{b\}}P_{\Argmin H}(x)\|\nonumber\\
        &\overset{(b)}\le \|x - P_{\Argmin H}(x)\|= {\rm dist}(x,\Argmin H),
        \end{align}
     where $(a)$ holds because $\Argmin H\subseteq \Theta\cap G^{-1}\{b\}$ and $(b)$ holds because projections onto closed convex sets are nonexpansive.

        Third, notice that $h_0$ is convex and real-valued, and hence it is locally Lipschitz continuous.
        Since $\Theta$ is compact, there exists $L_0 > 0$ such that
     \begin{equation}\label{Lip}
    |h_0(x) - h_0(u) | \leq L_0\|x - u\| \ \ {\rm whenever }\ \ x, u\in \Theta.
     \end{equation}

        Last but not least, since $H$ is a level-bounded KL function with exponent $\alpha$, we have from \cite[Lemma~3.10]{YuConvergenceRate2021} the existence of $\bar{c} >0$, $\epsilon > 0$, $r_0>0$ such that
        \begin{align}
            \mathrm{dist} (x,\Argmin H) \leq \bar{c}(H(x) - \inf H)^{1-\alpha} \label{property_of_error_bound}
        \end{align}
        whenever $x\in {\Theta} \cap G^{-1}\{b\}\, (=\dom \partial H)$ satisfies $\mathrm{dist}(x,\Argmin H) \leq \epsilon$ and
        $\inf H\le H(x)<\inf H + r_0$. Since $H$ is continuous on its (compact) domain, by shrinking $\epsilon$ further if necessary, we will assume that \eqref{property_of_error_bound} holds when $x\in {\Theta} \cap G^{-1}\{b\}$ and $\mathrm{dist}(x,\Argmin H) \leq \epsilon$.

     We are now ready to establish \eqref{Fineq}. Let $\epsilon_1 = \min\{\epsilon,1\}$. Pick any $x$ that satisfies $h(x) < \infty$\footnote{Notice that \eqref{Fineq} holds trivially if $h(x) =\infty$.} and $\mathrm{dist}(x,\Argmin H)\leq \epsilon_1$. Then $x\in \Theta$ and we have
        \begin{align}
            &\mathrm{dist}(x, \Argmin H) \leq \mathrm{dist}(P_{\Theta\cap G^{-1}\{b\}}(x), \Argmin H)
            + \mathrm{dist}(x, \Theta\cap G^{-1}\{b\}) \nonumber \\
            & \overset{\mathop{(a)}}{\leq} \bar{c}(H(P_{\Theta\cap G^{-1}\{b\}}(x)) - \inf H)^{1-\alpha} + \mathrm{dist}(x,\Theta\cap G^{-1}\{b\}) \nonumber \\
            & \overset{(b)}{\leq}\bar{c}(h_0(P_{\Theta\cap G^{-1}\{b\}}(x))-\inf H)^{1-\alpha} + \mathrm{dist}(x,\Theta\cap G^{-1}\{b\})^{1-\alpha}  \nonumber \\
            & \overset{\mathop{(c)}}{\leq} \bar{c}\big(h_0(x)+L_0\mathrm{dist}(x, \Theta\cap G^{-1}\{b\})-\inf H\big)^{1-\alpha} + \mathrm{dist}(x,\Theta\cap G^{-1}\{b\})^{1-\alpha} \nonumber \\
            & \overset{(d)}\le  \bar{c}\left(\bigg(h_0(x)+L_0\kappa\mathrm{dist}(x,  G^{-1}\{b\})-\inf H\bigg)^{1-\alpha}
            + \frac{\kappa^{1-\alpha}}{\bar{c}} \mathrm{dist}(x,  G^{-1}\{b\})^{1-\alpha} \right)\nonumber \\
            &\overset{(e)}{\leq} 2^{\alpha}\bar{c}\left(h_0(x)+\left(L_0\kappa +  \frac{\kappa}{\bar{c}^{1/(1-\alpha)}} \right)\mathrm{dist}(x, G^{-1}\{b\})-\inf H   \right)^{1-\alpha} \nonumber \\
            &\leq  2^{\alpha}\bar{c}\left(h_0(x)+\bar{c}_1\left(L_0\kappa +  \frac{\kappa}{\bar{c}^{1/(1-\alpha)}} \right)\|Gx  -b\|-\inf H   \right)^{1-\alpha},
            \nonumber
        \end{align}
        where $(a)$ holds because of \eqref{property_of_error_bound} and \eqref{PCDineq} (note that one can deduce from \eqref{PCDineq} that $\mathrm{dist}(P_{\Theta\cap G^{-1}\{b\}}(x),\Argmin H)\leq \epsilon_1)$,
        $(b)$ holds because we have $\mathrm{dist}(x,\Theta\cap G^{-1}\{b\}) \le \dist(x,\Argmin H) \le \epsilon_1 \leq 1$,
        $(c)$ follows from \eqref{Lip}, $(d)$ follows from \eqref{corollary_bauschke} and the fact that $x\in \Theta$, and $(e)$ holds because
        $a^{1-\alpha}+ b^{1-\alpha} \leq 2^{\alpha} (a+b)^{1-\alpha}$ for all $a\ge 0$, $b\ge 0$. Finally,
        the last inequality holds for some constant $\bar c_1>0$ (independent of $x$) thanks to \cite[Lemma 3.2.3]{facchinei2003finite}.
    \end{proof}

    The next corollary concerning local convergence rate of the sequence generated by $\Alg$ to the solution set of \eqref{problem} is now an immediate consequence of Theorem~\ref{rate} and Theorem~\ref{Theorem_of_KL}.
    \begin{corollary}[Local convergence rate to $\Argmin F$] \label{local_rate}
        Consider \eqref{problem}. Suppose that Assumption \ref{Assumption 2} and the {\bf CQ} in Definition~\ref{CQ}
        holds. Suppose further that $f(x) = f_0(x) + \delta_{\Xi}(x)$ and $g(y) = g_0(y) + \delta_{\Delta}(y)$, where
    $\Xi$, $\Delta$ are compact convex sets and $f_0$ and $g_0$ are real-valued convex functions.
    Let $F(x,y)= f(x)+ g(y)+ \delta_{\{c\}}(Ax+By)$ and
    let $\{(x^t,y^t)\}$ be generated by $\Alg$.
        If $F$ is a KL function with exponent $\alpha\in [0,1)$, then
        \begin{align}
            \mathrm{dist}\left((x^t, y^t), \Argmin F\right) = \mathcal{O} \left((t+1)^{-(1-\alpha)\min\{\varpi_1, \varpi_2\}} \right), \nonumber
        \end{align}
        where $\varpi_1$ and $\varpi_2$ are defined in Remark~\ref{remark_rate}.
    \end{corollary}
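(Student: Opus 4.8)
The plan is to recast \eqref{problem} on the product space $\mathcal{E}_1\times\mathcal{E}_2$ and then feed the iteration complexity from Theorem~\ref{rate} into the KL-based error bound of Theorem~\ref{Theorem_of_KL}. Concretely, I would set $G(x,y):=Ax+By$, $b:=c$, $\Theta:=\Xi\times\Delta$, $h_0(x,y):=f_0(x)+g_0(y)$, and $h:=h_0+\delta_{\Theta}$, so that $h(x,y)=f(x)+g(y)$ and the function $H:=h+\delta_{\{b\}}(G\,\cdot)$ appearing in Theorem~\ref{Theorem_of_KL} is exactly $F$. In particular $\Argmin H=\Argmin F$, and, fixing a solution $(x^*,y^*)$ of \eqref{problem}, $\inf H=f(x^*)+g(y^*)$.

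Next I would verify the hypotheses of Theorem~\ref{Theorem_of_KL}. Since $f_0,g_0$ are real-valued convex, we have $\dom f=\Xi$ and $\dom g=\Delta$, which are compact; hence $\Theta$ is compact convex and $h_0$ is real-valued convex. Using $\mathrm{ri}(\Xi\times\Delta)=\mathrm{ri}\,\Xi\times\mathrm{ri}\,\Delta$, the {\bf CQ} gives $c\in A\,\mathrm{ri}\,\dom f+B\,\mathrm{ri}\,\dom g=G\,\mathrm{ri}\,\dom h$, i.e.\ $b\in G\,\mathrm{ri}\,\dom h$. Finally $H=F$ is a KL function with exponent $\alpha$ by assumption. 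Thus Theorem~\ref{Theorem_of_KL} supplies $\epsilon>0$, $c_0>0$, $\eta>0$ such that
\[
\dist\bigl((x,y),\Argmin F\bigr)\le c_0\,\bigl|\,f(x)+g(y)+\eta\|Ax+By-c\|-f(x^*)-g(y^*)\,\bigr|^{\,1-\alpha}
\]
whenever $\dist((x,y),\Argmin F)\le\epsilon$. Moreover, since $\dom f$ and $\dom g$ are compact, Assumption~\ref{Assumption 1} holds, so Theorem~\ref{rate} and Remark~\ref{remark_rate} apply to $\{(x^t,y^t)\}$.

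The one step that requires a genuine (if short) argument rather than bookkeeping is that the displayed error bound is \emph{eventually} applicable along the generated sequence, i.e.\ that $\dist((x^t,y^t),\Argmin F)\to 0$. Here I would use compactness of $\Xi\times\Delta$, which contains $\{(x^t,y^t)\}$: for any cluster point $(\bar x,\bar y)$ one has $A\bar x+B\bar y=c$ because $\|Ax^t+By^t-c\|\to 0$ by Remark~\ref{remark_rate}, and $f(\bar x)+g(\bar y)\le f(x^*)+g(y^*)$ by lower semicontinuity of $(x,y)\mapsto f(x)+g(y)$ together with $f(x^t)+g(y^t)\to f(x^*)+g(y^*)$ from Theorem~\ref{rate}; hence $(\bar x,\bar y)\in\Argmin F$, and a routine subsequence argument gives $\dist((x^t,y^t),\Argmin F)\to 0$. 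Consequently there is $T$ with $\dist((x^t,y^t),\Argmin F)\le\epsilon$ for all $t\ge T$.

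Finally I would plug in the rates. By Remark~\ref{remark_rate}, $|f(x^t)+g(y^t)-f(x^*)-g(y^*)|=\mathcal{O}((t+1)^{-\min\{\varpi_1,\varpi_2\}})$ and $\|Ax^t+By^t-c\|=\mathcal{O}((t+1)^{-\varpi_2})=\mathcal{O}((t+1)^{-\min\{\varpi_1,\varpi_2\}})$, the last equality because $\varpi_2\ge\min\{\varpi_1,\varpi_2\}$; hence the argument of the absolute value in the displayed bound is $\mathcal{O}((t+1)^{-\min\{\varpi_1,\varpi_2\}})$ along the sequence. Raising to the power $1-\alpha$ and invoking the displayed bound for $t\ge T$ yields $\dist((x^t,y^t),\Argmin F)=\mathcal{O}((t+1)^{-(1-\alpha)\min\{\varpi_1,\varpi_2\}})$, as claimed. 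I expect everything except the compactness/lower-semicontinuity argument for $\dist((x^t,y^t),\Argmin F)\to 0$ to be purely routine; that convergence is the only real obstacle, and it is mild.
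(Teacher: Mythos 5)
Your proposal is correct and follows essentially the same route as the paper: apply Theorem~\ref{Theorem_of_KL} with $G(x,y)=Ax+By$, $\Theta=\Xi\times\Delta$, $h_0=f_0+g_0$ to obtain the local error bound, use boundedness of $\{(x^t,y^t)\}\subseteq\Xi\times\Delta$ together with the rates in Remark~\ref{remark_rate} to show the iterates eventually enter the $\epsilon$-neighborhood of $\Argmin F$, and then substitute the rates into the error bound. The only cosmetic difference is that you argue $\dist((x^t,y^t),\Argmin F)\to 0$ directly via lower semicontinuity and a subsequence argument, whereas the paper routes this through the set of cluster points and the continuity of $f_0,g_0$; both are valid.
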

    \begin{proof}
    First, from Theorem \ref{Theorem_of_KL}, we know that there exist $\epsilon > 0$, $c_0>0$ and $\eta>0$ such that
        \begin{align}\label{haha}
            \mathrm{dist}\left((x ,y), \Argmin F\right) \leq  c_0 \left|f(x) + g(y)+\eta\|Ax +By -c\| -\inf F \right|^{1-\alpha}
        \end{align}
    whenever ${\rm dist}((x,y),\Argmin F)\le \epsilon$.

    Now, notice that $\{(x^t, y^t)\}$ is bounded as it is contained in the compact set $\Xi\times \Delta$.
    Let $\cal S$ be the set of cluster points of $\{(x^t, y^t)\}$. Then there exists $T>0$ such that
        \begin{align*}
            \dist((x^t, y^t), {\cal S}) \leq \epsilon ~~~\forall t >T.
        \end{align*}
        Since we have ${\cal S} \subseteq \Argmin F$ in view of Remark \ref{remark_rate} and the continuity of $f_0$ and $g_0$, we deduce further that for all $t>T$,
            \begin{align*}
            \dist((x^t, y^t), \Argmin F) \leq \dist((x^t, y^t), {\cal S}) \leq \epsilon.
        \end{align*}
        Using this and \eqref{haha}, we conclude that for all $t> T$,
        \begin{align}
            \mathrm{dist}\left((x^t ,y^t), \Argmin F\right) &\leq  c_0 \left|f(x^t) + g(y^t)+\eta\|Ax^t +By^t -c\|-\inf F \right|^{1-\alpha} \nonumber \\
            & \leq  c_0\left( \left|f(x^t) + g(y^t)-\inf F \right| +\eta\|Ax^t +By^t -c\| \right)^{1-\alpha}. \nonumber
        \end{align}
    The desired result follows upon combining the above display with Remark~\ref{remark_rate}.
    \end{proof}

    \subsubsection{\add{Deducing KL exponents}}
    In view of Corollary~\ref{local_rate}, deducing the KL exponent of the function $F(x,y) := f(x)+ g(y)+ \delta_{\{c\}}(Ax+By)$ \add{(i.e., the extended objective of \eqref{problem})} is the key to deriving the local convergence rate of $\{\dist((x^t,y^t),\Argmin F)\}$. \add{Here, we discuss two strategies for doing so. The first strategy is based on a systematic framework described in \cite[Section~5]{lindstrom2024optimal} that leverages (i) conic reformulation; (ii) facial reduction techniques for deducing error bounds; (iii) the inf-projection calculus rule for KL exponents; and (iv) the interplay between KL exponents and error bounds. We illustrate this strategy in the following examples.}

    \begin{example}\label{Example4.1}
   \add{Consider \eqref{exam_reform}. Recall that by construction, the solution set of \eqref{exam_prob1} is contained in the interior of the set $\{x\in \R^n: \|x\|_\infty \leq 1+\|\widehat x\|_1 \}$. Thus, the following function has the same KL exponent as the extended objective of \eqref{exam_reform}:}
    \begin{equation}\label{Fdef2}
    F(x, y) := \|x\|_1+\delta_{\|\cdot\|_p \leq \sigma}(y) +\delta_{\{b\}}(Ax-y).
    \end{equation}

    \add{We now deduce the KL exponent of \eqref{Fdef2}.} To this end, notice that we can rewrite \eqref{exam_reform} as follows.
    \begin{equation} \label{KL_example_rewite}
      \begin{array}{cl}
             \min\limits_{x,w,y,s} & w\\
            {\rm{s.t.}} &  s = \sigma,\ \  Ax-y = b,  \\
                &   (y,s) \in \mathcal{K}_p^{{\color{blue}m+1}},\ \ (x,w) \in \mathcal{K}^{n+1}_1,
        \end{array}
    \end{equation}
    where
    \[
    \mathcal{K}_p^{\color{blue}m+1} =\{(y,s)\in \R^{\color{blue}m}\times \R_+: \|y\|_p \leq s \}  \ {\rm and}\  \mathcal{K}_1^{n+1} =\{(x,w)\in \R^n\times \R_+: \|x\|_1 \leq w \}
    \]
    are the $p$-cone and $L_1$-norm cone, respectively.

    Define $z = (x,w,y,s)$ for notational simplicity. Then the feasible set of  \eqref{KL_example_rewite} is
    \begin{equation}\label{frakF}
   \add{ {\cal F}_{C}} := \{z = (x,w,y,s)\in \mathcal{K}^{n+1}_1 \times \mathcal{K}_p^{{\color{blue}m+1}}: s= \sigma,\ Ax-y =b \}.
    \end{equation}
    Let the optimal value of  \eqref{exam_prob1} be $\theta$. Then the solution set of  \eqref{KL_example_rewite} is
    \[
    {\cal S} :=\underbrace{\{z :\; w = \theta, ~ s=\sigma, ~Ax -y =b \}}_{{\cal S}_1}
\cap \underbrace{(\mathcal{K}^{n+1}_1\times \mathcal{K}_p^{{\color{blue}m+1}})}_{{\cal S}_2}.
    \]
    We have the following observations concerning ${\cal S}$.
    \begin{itemize}
      \item Notice that $\mathcal{K}^{n+1}_1$ is polyhedral and all proper exposed faces of $\mathcal{K}^{{\color{blue}m+1}}_p$ are polyhedral but $\mathcal{K}^{{\color{blue}m+1}}_p$ is not polyhedral; see \cite[Section~4.1]{lindstrom2024optimal}.
    Then we have $\ell_{\rm poly}(\mathcal{K}^{n+1}_1) = 0$ and $\ell_{\rm poly}(\mathcal{K}^{{\color{blue}m+1}}_p) = 1$; see \cite[Section~5.1]{lourencco2018facial} for the definition of $\ell_{\rm poly}$. In view of this, when we apply \cite[Proposition~3.2]{lindstrom2023error} to the feasibility problem of finding an element in ${\cal S}_1\cap {\cal S}_2$, we see that the $\ell$ there is at most $2$,
    \add{where $\ell$ is the length of the chain of faces in \cite[Eq.~(3.1)]{lindstrom2023error}.}
      \item Using \cite[Proposition~3.13]{lindstrom2023error}, the discussion in \cite[Section~4.2]{lindstrom2024optimal} and the Hoffman error bound, one can deduce that the so-called one-step facial residual functions ($\mathds{1}$-FRFs) of $(\mathcal{K}^{n+1}_1\times \mathcal{K}_p^{{\color{blue}m+1}})$ takes the form of $\psi(\epsilon,\eta) = \rho(\eta)\epsilon + \hat\rho(\eta)\epsilon^\frac12$ for some nondecreasing functions $\rho$ and $\hat \rho$, where $\epsilon \ge 0$ and $\eta\ge 0$.
    \end{itemize}
    The above observations  together with \cite[Lemma~2.1]{lindstrom2024optimal} show that $\{{\cal S}_1,{\cal S}_2\}$ satisfies a uniform H\"olderian error bound with exponent $\frac12$. In particular, this means that \add{ for every bounded set ${\cal B}$, there exists $c_{\cal B} > 0$ such that}
    \[
    {\rm{dist}}(z, {\cal S}_1\cap {\cal S}_2) \le \add{c_{\cal B}}\max\{{\rm{dist}}(z, {\cal S}_1)^\frac12,{\rm{dist}}(z, {\cal S}_2)^\frac12\} \ \ \ \forall z \in \add{{\cal B}}.
    \]
    Thus, there exists \add{$\kappa_{\cal B} > 0$} such that
    \begin{align}
      {\rm{dist}}(z, {\cal S}_1\cap {\cal S}_2) & \leq \add{c_{\cal B}} {\rm{dist}}(z, {\cal S}_1)^\frac12 \leq \add{\kappa_{\cal B}} |w - \theta|^{\frac{1}{2}}\ \ \ \forall z\in \add{{\cal B} \cap {\cal F}_{C}}. \nonumber
    \end{align}
    Combining this result with \cite[Theorem~5]{bolte2017error}, we conclude that  the function $\widehat F(z) := w+ \delta_{ \add{ {\cal F}_{C}}}(z)$ is a KL function with exponent $\frac{1}{2}$.

    Next, if we fix any $(x,y)$  \add{satisfying $Ax -y =b$ and $\|y\|_p \le \sigma$},
    and let $Y\!(x,y) \!=\! \Argmin_{w,s}\! \widehat F(x,w,y,s)$, then $Y(x,y) = \{\|x\|_1,\sigma\}$. Therefore, $Y(x,y)$ is nonempty and compact. Observe that $F(x, y)  = \inf_{w,  s} ~w + \delta_{ \add{ {\cal F}_{C}}}(z)= \inf_{w,s} \widehat F(z)$,
    where $F$ is given in \eqref{Fdef2} and $ \add{ {\cal F}_{C}}$ is defined in \eqref{frakF}. Then, in view of \cite[Corollary 3.3]{yu2022kurdyka} and the KL exponent of $\widehat F$, we conclude that $F$ is a KL function with exponent $\frac12$.
    \end{example}

\begin{example} \label{KL_hankel}
    \add{Consider \eqref{reform_prob3}.  By construction, the solution set of \eqref{reform_prob3} is contained in the interior of the set
    $\left\{\bm x\in \mathbb{C}^n: \|{\bm x} - \Pi_\Omega(\bar{\bm x})\|_2 \le 1+ \|\Pi_\Omega({\bar{\bm x}})\|_2 +\sigma \right\}$. Hence the following function has the same KL exponent as the extended objective of \eqref{reform_prob3}:
    \begin{align}\label{Fexample42}
      F(x_{_{\cal R}}, x_{_{\cal I}},Y_{_{\cal R}},Y_{_{\cal I}} )
      &= \sum_{j\in \Omega}w_j\sqrt{(x_{_{\cal R}} -\bar{x}_{_{\cal R}})_j^2 + (x_{_{\cal I}} - \bar{x}_{_{\cal I}})_j^2}  \notag\\
      &\ \ \ \ + \delta_{\|\cdot + i \cdot \|_* \le \sigma}(Y_{_{\cal R}},Y_{_{\cal I}}) + \delta_{\{0\}}(Y_{_{\cal R}} - {\cal H}(x_{_{\cal R}}))+ \delta_{\{0\}}(Y_{_{\cal I}} - {\cal H}(x_{_{\cal I}})).
    \end{align}}

     \add{We now deduce the KL exponent of the above function.
     We first note from \cite{recht2010guaranteed} that the nuclear norm of a complex matrix ${\bm Y} \in \mathbb{C}^{m\times q}$ can be represented as:\footnote{\add{We would like to point out that while the discussion in \cite{recht2010guaranteed} was for real matrices, its proof extends to complex matrices.}}
    \begin{equation*}
      \|{\bm Y}\|_* = \min_{{\bm W},{\bm V}} \left\{ \frac{1}{2} ({\rm tr}({\bm W})+ {\rm tr}({\bm V})): \left[ \begin{array}{cc}
                                                                                        {\bm W} & {\bm Y}^* \\
                                                                                        {\bm Y} & {\bm V}
                                                                                      \end{array} \right] \succeq 0, {\bm W} \in \mathbb{H}^q, {\bm V} \in \mathbb{H}^m \right\},
    \end{equation*}
    where $\mathbb{H}^m$ is the space of $m \times m$ Hermitian matrices and ${\bm Y}^*$ is the conjugate transpose of ${\bm Y}$.
    Therefore, problem \eqref{reform_prob3} can be equivalently reformulated as follows:
    \begin{equation}\label{KL_exam_reform2}
      \begin{array}{cl}
      \min\limits_{z\in \mathbb{E}} & t \\
      {\rm s.t.} &u =\Pi_{\Omega}\big(w\circ(x_{_{\cal R}} - \bar{x}_{_{\cal R}})\big),~v =\Pi_{\Omega}\big(w\circ(x_{_{\cal I}} - \bar{x}_{_{\cal I}})\big), \\
                      &Y_{_{\cal R}} = {\cal H}(x_{_{\cal R}}), ~ Y_{_{\cal I}}  = {\cal H}(x_{_{\cal I}}),  \\
                      &  \begin{bmatrix}
                      W_{_{\cal R}}+ iW_{_{\cal I}} & (Y_{_{\cal R}} + iY_{_{\cal I}})^* \\
                      Y_{_{\cal R}}+ iY_{_{\cal I}} & V_{_{\cal R}}+ iV_{_{\cal I}}
                      \end{bmatrix} \succeq 0,  \\ [8 pt]
                      & \frac{1}{2} ({\rm tr}(W_{_{\cal R}})+ {\rm tr}(V_{_{\cal R}}))+ \alpha = \sigma, ~\alpha \ge 0, \\
                      & t = \sum_{j=1}^n \tau_j,  \tau_j\ge  \sqrt{u_j^2 + v_j^2}, \ \text{for}\ j=1,...,n,
    \end{array}
    \end{equation}
    where
    \begin{align*}
    z &:= (u_1 , v_1,\tau_1,\ldots,u_n, v_n,\tau_n,Y_{_{\cal R}},Y_{_{\cal I}},W_{_{\cal R}},W_{_{\cal I}},V_{_{\cal R}},V_{_{\cal I}}, x_{_{\cal R}}, x_{_{\cal I}}, t,\alpha) \\
    &\in \underbrace{\mathbb{R}^3\times \cdots \times \mathbb{R}^3}_{n\ {\rm copies}} \times
            \mathbb{R}^{m\times q} \times \mathbb{R}^{m\times q} \times {\cal S}^q\!\times\! {\cal A}^q\!\times\! {\cal S}^m\!\times\! {\cal A}^m
            \times \mathbb{R}^n\times\R^n\times \R\times \mathbb{R} =: \mathbb{E},
    \end{align*}
    ${\cal S}^m$ is the space of $m\times m$ real symmetric matrices, ${\cal A}^m$ is the space of $m\times m$ real anti-symmetric matrices.
    Let $\theta$ denote the optimal value of \eqref{reform_prob3}.
    We define
    \[
        {\cal S}_1 \!=\! \Big\{ z\in \mathbb{E}:\!\!
        \begin{array}{l}
          u =\Pi_{\Omega}\big(w\circ(x_{_{\cal R}} -\bar{x}_{_{\cal R}})\big), v =\Pi_{\Omega}\big(w\circ(x_{_{\cal I}} - \bar{x}_{_{\cal I}})\big), t=\theta, t = \sum_{j=1}^n \tau_j, \\
                      Y_{_{\cal R}} = {\cal H}(x_{_{\cal R}}), ~ Y_{_{\cal I}} = {\cal H}(x_{_{\cal I}}),~\frac{1}{2} ({\rm tr}(W_{_{\cal R}})+ {\rm tr}(V_{_{\cal R}}))+ \alpha = \sigma
        \end{array}
           \!\!\Big\}
    \]
    and
    \[
        {\cal S}_2 = \underbrace{{\cal K}_2^3\times \cdots \times {\cal K}_2^3}_{n\ {\rm copies}} \times \mathbb{H}_+^{m+q} \times \mathbb{R}^n\times\R^n\times \R\times \mathbb{R}_+ \subset \mathbb{E},
    \]
    where ${\cal K}_2^3 = \{(a,b,c)\in \R^3: \sqrt{a^2+b^2}\le c\}$ and $\mathbb{H}_+^{m+q}\subset \mathbb{R}^{m\times q} \times \mathbb{R}^{m\times q} \times {\cal S}^q\!\times\! {\cal A}^q\!\times\! {\cal S}^m\!\times\! {\cal A}^m$ is defined as:
    \[
       \left\{ (Y_{_{\cal R}},Y_{_{\cal I}},W_{_{\cal R}},W_{_{\cal I}},V_{_{\cal R}},V_{_{\cal I}}): \begin{array}{l}
                      \begin{bmatrix}
                      W_{_{\cal R}}+ iW_{_{\cal I}} & (Y_{_{\cal R}} + iY_{_{\cal I}})^* \\
                      Y_{_{\cal R}}+ iY_{_{\cal I}} & V_{_{\cal R}}+ iV_{_{\cal I}}
                      \end{bmatrix} \succeq 0, \\[8pt]
                       W_{_{\cal R}} = W_{_{\cal R}}^T, V_{_{\cal R}} =V_{_{\cal R}}^T, W_{_{\cal I}} = -W_{_{\cal I}}^T, V_{_{\cal I}} = -V_{_{\cal I}}^T
                     \end{array}     \right\}.
    \]
    Then, the solution set of \eqref{KL_exam_reform2} is ${\cal S}= {\cal S}_1 \cap {\cal S}_2$.
    For ${\cal S}$, we have the following observations.}
    \begin{itemize}
      \item \add{
      As mentioned in Example~\ref{Example4.1}, $\ell_{\rm poly}({\cal K}_2^3) =1$. Since $\mathbb{H}_+^{m+q}$ is a symmetric cone, we have $\ell_{\rm poly}(\mathbb{H}_+^{m+q}) \le m+q - 1$ thanks to \cite[Remark~39]{lourencco2021amenable} and \cite[Theorem~28]{lourencco2021amenable}. Notice that $\ell_{\rm poly}(\mathbb{R}_+ ) = \ell_{\rm poly}(\mathbb{R}^n) = 0$. Therefore, by  \cite[Proposition~3.2]{lindstrom2023error}, we obtain that, $ \ell - 1 \le m+q+n-1$, where $\ell$ is the length of the chain of faces in \cite[Eq.~(3.1)]{lindstrom2023error}.
      }
      \item \add{By \cite[Proposition~3.13]{lindstrom2023error}, the $\mathds{1}$-FRFs derived in \cite[Theorem~35]{lourencco2021amenable} and the discussion in \cite[Section~4.2]{lindstrom2024optimal}, and the Hoffman error bound, we have that  $\mathds{1}$-FRFs of ${\cal S}_2$ can be taken as $\psi(\epsilon,\eta) = \rho(\eta)\epsilon + \hat\rho(\eta)\epsilon^\frac12$ for some nondecreasing functions $\rho$ and $\hat \rho$, where $\epsilon \ge 0$ and $\eta\ge 0$.
      }
    \end{itemize}
     \add{Combining the above observations with \cite[Lemma~2.1]{lindstrom2024optimal}, we deduce that $\{{\cal S}_1,{\cal S}_2\}$ satisfies a uniform H\"olderian error bound with exponent $2^{-\ell+1}$, i.e.,  for every bounded set ${\cal B}\subset \mathbb{E}$, there exists $c_{\cal B} > 0$ such that
    \[
    {\rm{dist}}(z, {\cal S}_1\cap {\cal S}_2) \le c_{\cal B}\max\{{\rm{dist}}(z, {\cal S}_1)^{2^{-\ell+1}},{\rm{dist}}(z, {\cal S}_2)^{2^{-\ell+1}}\}
     \ \ \forall z \in {\cal B}.
    \]
    Let ${\cal F}_H\subseteq \mathbb{E}$ be the feasible region of \eqref{KL_exam_reform2}. Then, there exists $\kappa_{\cal B} > 0$ such that
    \begin{align}
      {\rm{dist}}( z, {\cal S}_1\cap {\cal S}_2) & \leq c_{\cal B} {\rm{dist}}( z, {\cal S}_1)^{2^{-\ell+1}} \leq \kappa_{\cal B} |t - \theta|^{2^{-\ell+1}}\ \ \ \forall  z\in {\cal B} \cap {\cal F}_H. \nonumber
    \end{align}
    Combining this result with \cite[Theorem~5]{bolte2017error}, we see that $ \widetilde F( z) := t+ \delta_{{\cal F}_H}(z)$ is a KL function with exponent $1 - 2^{-\ell+1}$.}

    \add{Next,  we fix any $(x_{_{\cal R}}, x_{_{\cal I}}, Y_{_{\cal R}}, Y_{_{\cal I}})$ such that $Y_{_{\cal R}}+ iY_{_{\cal I}} = {\cal H}(x_{_{\cal R}}+ix_{_{\cal I}})$ and $\|Y_{_{\cal R}}+iY_{_{\cal I}}\|_* \le \sigma$, and let
    \[
        {\cal Y}(x_{_{\cal R}}, x_{_{\cal I}},Y_{_{\cal R}},Y_{_{\cal I}}) = \Argmin_{t, u, v, \tau, \alpha, W_{_{\cal R}}, W_{_{\cal I}}, V_{_{\cal R}}, V_{_{\cal I}}} \widetilde F( z).
    \]
    Then one can check that ${\cal Y}(x_{_{\cal R}}, x_{_{\cal I}},Y_{_{\cal R}},Y_{_{\cal I}})$ is nonempty and compact. Moreover, it holds that $ F(x_{_{\cal R}}, x_{_{\cal I}},Y_{_{\cal R}},Y_{_{\cal I}}) = \inf_{t, u, v, \tau, \alpha, W_{_{\cal R}}, W_{_{\cal I}}, V_{_{\cal R}}, V_{_{\cal I}}}  \widetilde F( z)$, where $F$ is defined in \eqref{Fexample42}.
     In view of \cite[Corollary 3.3]{yu2022kurdyka} and the KL exponent of $\widetilde F$, we see that the $ F$ in \eqref{Fexample42} is a KL function with exponent $1 - 2^{-\ell+1}$.}
\end{example}

In many applications, it may be difficult to calculate the KL exponent of the $F$ in Corollary~\ref{local_rate} via the facial reduction techniques described in the above example. As an alternative strategy, when {\bf CQ} in Definition~\ref{CQ} holds, we propose to derive such a KL exponent from the KL exponent of an associated function given by ${\frak L}(x,y):= f(x) + g(y) + \langle \bar{\lambda},Ax + By - c\rangle$, where $\bar \lambda$ is a Lagrange multiplier of \eqref{problem}.
Indeed, when $f$ and $g$ are continuous on their domains, the KL exponent of ${\frak L}$ can be deduced from those of $x \mapsto f(x) + \langle \bar{\lambda},Ax \rangle$ and $y\mapsto g(y) + \langle \bar{\lambda}, By\rangle $; see \cite[Theorem 3.3]{li2018calculus}.

We now present the following theorem concerning the KL exponent of the Lagrangian function.
\begin{theorem}[KL exponent from Lagrangian]
    Let $h: \mathcal{E}_1 \rightarrow (-\infty,\infty]$ be a proper closed convex function, $G: \mathcal{E}_1 \rightarrow \mathcal{E}$ be a linear map
    and $b \in G\, \dom h$. Let $H(x) = h(x) + \delta_{\{b\}}(Gx)$ and suppose that $\Argmin H\neq \emptyset$. Let $\bar{\lambda}$ be a Lagrange multiplier for the following problem\footnote{Recall that a Lagrange multiplier exists if we assume in addition that $b\in G\,{\rm ri}\,\dom h$.}
    \begin{align*}
    \begin{array}{cl}
      \min\limits_{x\in {\cal E}_1}&h(x)\\
      {\rm s.t.}& Gx = b.
    \end{array}
    \end{align*}
    Suppose that $H_{\bar{\lambda}}(x) := h(x) + \langle \bar{\lambda}, Gx - b\rangle$ satisfies the KL property with exponent $\alpha\in [0,1)$
    at an $\bar{x}\in \Argmin H$. If ${\rm ri}\, (\Argmin H_{\bar{\lambda}}) \cap G^{-1}\{b\}\neq \emptyset$ or
    $\Argmin H_{\bar\lambda}$ is a polyhedron, then
    $H$ also satisfies the KL property at $\bar{x}$ with exponent $\alpha$.
\end{theorem}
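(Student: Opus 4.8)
The plan is to route through H\"olderian error bounds: convert the KL assumption on $H_{\bar\lambda}$ into a local error bound for $H_{\bar\lambda}$ relative to $\Argmin H_{\bar\lambda}$, transfer this to a local error bound for $H$ relative to $\Argmin H$ using the bounded linear regularity of the pair $\{\Argmin H_{\bar\lambda},\,G^{-1}\{b\}\}$, and then turn the error bound for $H$ back into the KL inequality. The first step is bookkeeping about the minimizers and optimal values. Since $\bar\lambda$ is a Lagrange multiplier, there is a feasible optimal $x^0$ with $0\in\partial h(x^0)+G^*\bar\lambda=\partial H_{\bar\lambda}(x^0)$, so $x^0\in\Argmin H_{\bar\lambda}$ and $\inf H_{\bar\lambda}=H_{\bar\lambda}(x^0)=h(x^0)=\inf H$. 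Hence any feasible $x$ with $h(x)=\inf H$ satisfies $H_{\bar\lambda}(x)=h(x)=\inf H_{\bar\lambda}$, so $\Argmin H\subseteq\Argmin H_{\bar\lambda}$ and in particular $\bar x\in\Argmin H_{\bar\lambda}$; conversely any $x\in\Argmin H_{\bar\lambda}\cap G^{-1}\{b\}$ is feasible with $h(x)=H_{\bar\lambda}(x)=\inf H$, hence optimal. Therefore
\[
\Argmin H=\Argmin H_{\bar\lambda}\cap G^{-1}\{b\},\qquad \bar x\in\Argmin H_{\bar\lambda},\qquad \inf H=\inf H_{\bar\lambda}=H(\bar x).
\]

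Since $H_{\bar\lambda}$ is proper closed convex, $\bar x\in\Argmin H_{\bar\lambda}$ and $H_{\bar\lambda}$ has the KL property with exponent $\alpha$ at $\bar x$, the equivalence between the KL property with exponent $\alpha$ and the H\"olderian error bound for convex functions (cf.\ \cite{bolte2017error,li2018calculus} and \cite[Lemma~3.10]{YuConvergenceRate2021}) gives $c_1>0$, $\epsilon_1>0$ and $\nu_1>0$ with $\dist(x,\Argmin H_{\bar\lambda})\le c_1(H_{\bar\lambda}(x)-\inf H_{\bar\lambda})^{1-\alpha}$ whenever $\|x-\bar x\|\le\epsilon_1$ and $H_{\bar\lambda}(x)<\inf H_{\bar\lambda}+\nu_1$. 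Next, $G^{-1}\{b\}$ is a nonempty affine subspace, hence polyhedral; if ${\rm ri}(\Argmin H_{\bar\lambda})\cap G^{-1}\{b\}\neq\emptyset$ one applies \cite[Corollary~3]{bauschke1999strong} to the closed convex set $\Argmin H_{\bar\lambda}$ and the polyhedral set $G^{-1}\{b\}$, while if $\Argmin H_{\bar\lambda}$ is a polyhedron one applies the same corollary to the two polyhedra, whose intersection contains $\bar x$ by the first step. Either way $\{\Argmin H_{\bar\lambda},\,G^{-1}\{b\}\}$ is boundedly linearly regular, so there is $\kappa>0$ (depending on $\epsilon_1$) with $\dist(x,\Argmin H)=\dist(x,\Argmin H_{\bar\lambda}\cap G^{-1}\{b\})\le\kappa\bigl(\dist(x,\Argmin H_{\bar\lambda})+\dist(x,G^{-1}\{b\})\bigr)$ for $\|x-\bar x\|\le\epsilon_1$. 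For $x$ with $H(x)<\infty$ (so $Gx=b$, $\dist(x,G^{-1}\{b\})=0$, $H(x)=H_{\bar\lambda}(x)$), $\|x-\bar x\|\le\epsilon_1$ and $H(x)<\inf H+\nu_1$, these two inequalities combine to give
\[
\dist(x,\Argmin H)\le\kappa\,\dist(x,\Argmin H_{\bar\lambda})\le\kappa c_1\bigl(H(x)-\inf H\bigr)^{1-\alpha};
\]
for $x$ with $H(x)=\infty$ the condition $H(x)<\inf H+\nu_1$ fails, so nothing is needed.

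To finish, I would convert this error bound into the KL inequality. Put $U=\{x:\|x-\bar x\|<\epsilon_1\}$ and $r=\nu_1$. For $x\in U$ with $\inf H<H(x)<\inf H+r$ and $\partial H(x)\neq\emptyset$, take $v\in\partial H(x)$; since $\Argmin H$ is nonempty closed convex and $H$ is convex,
\[
H(x)-\inf H=H(x)-H\!\bigl(P_{\Argmin H}(x)\bigr)\le\langle v,\,x-P_{\Argmin H}(x)\rangle\le\|v\|\,\dist(x,\Argmin H)\le\|v\|\,\kappa c_1\bigl(H(x)-\inf H\bigr)^{1-\alpha},
\]
hence $\|v\|\ge(\kappa c_1)^{-1}(H(x)-\inf H)^{\alpha}$, and taking the infimum over $v$ gives $\dist(0,\partial H(x))\ge(\kappa c_1)^{-1}(H(x)-\inf H)^{\alpha}$ (this also holds trivially when $\partial H(x)=\emptyset$). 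Choosing $\phi(t)=\tfrac{\kappa c_1}{1-\alpha}\,t^{1-\alpha}$, which is concave with $\phi(0)=0$, $\phi\in C^1(0,r)$ and $\phi'>0$, one obtains $\phi'(H(x)-H(\bar x))\,\dist(0,\partial H(x))\ge1$ for all such $x$; together with $\bar x\in\Argmin H\subseteq\dom\partial H$ this is exactly the KL property of $H$ at $\bar x$ with exponent $\alpha$.

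The step I expect to be the main obstacle is the use of the convex ``KL $\Leftrightarrow$ H\"olderian error bound'' equivalence in a genuinely local form: here $H_{\bar\lambda}$ need not be level bounded and $\bar x$ may lie on the boundary of $\dom h$, where $H_{\bar\lambda}$ need not be continuous, so the error bound must carry both the neighborhood restriction $\|x-\bar x\|\le\epsilon_1$ and the sublevel restriction $H_{\bar\lambda}(x)<\inf H_{\bar\lambda}+\nu_1$, and these restrictions have to be threaded consistently through the transfer step and into the final KL inequality. A secondary point is checking, separately for the two alternative hypotheses, the qualification required by \cite[Corollary~3]{bauschke1999strong} — the relative-interior condition in the first case, and nonemptiness of the intersection (supplied by $\bar x$) in the all-polyhedral case.
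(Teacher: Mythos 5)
Your proposal is correct and follows essentially the same route as the paper: establish $\Argmin H=\Argmin H_{\bar\lambda}\cap G^{-1}\{b\}$ with equal optimal values, convert the KL assumption on $H_{\bar\lambda}$ into a local H\"olderian error bound via \cite[Theorem~5]{bolte2017error}, transfer it to $H$ using the bounded linear regularity of $\{\Argmin H_{\bar\lambda},G^{-1}\{b\}\}$ from \cite[Corollary~3]{bauschke1999strong} (with the two alternative hypotheses serving exactly to validate that corollary), and convert back to the KL inequality. The only cosmetic differences are that the paper cites \cite[Theorem~28.1]{Rockafellar+1970} for the minimizer identity and reinvokes \cite[Theorem~5]{bolte2017error} for the final error-bound-to-KL step, which you instead carry out by hand via the subgradient inequality.
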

\begin{proof}
    First, because $\bar{\lambda}$ is a Lagrange multiplier, we have
    \begin{align}\label{ArgminF}
        H(\bar{x}) =\inf H = \inf H_{\bar{\lambda}} = H_{\bar{\lambda}}(\bar{x}) \ {\rm and}\
        \bar{x} \!\in\! \Argmin H \!=\!\Argmin H_{\bar{\lambda}} \cap G^{-1}\{b\},
    \end{align}
    where the last equality holds because of \cite[Theorem 28.1]{Rockafellar+1970}.
    Second, since $H_{\bar{\lambda}}$ satisfies the KL property with exponent $\alpha$ at $\bar{x}$, in view of \cite[Theorem~5]{bolte2017error}, there
    exist $\epsilon>0$,  $r_0>0$ and $\bar{c}>0$ such that for any $x$ satisfying $\|x - \bar{x}\rVert \leq \epsilon$ and
    $H_{\bar{\lambda}}(\bar{x})<H_{\bar{\lambda}}(x)< H_{\bar{\lambda}}(\bar{x})+ r_0$, we have
    \begin{align}\label{ArgminFineq}
        \dist(x,\Argmin H_{\bar{\lambda}}) \leq \bar{c}(H_{\bar{\lambda}}(x) - H_{\bar{\lambda}}(\bar{x}))^{1-\alpha}.
    \end{align}
    Now, for any $x$ satisfying $\|x - \bar{x}\rVert \leq \epsilon$ and
    $H(\bar{x})<H(x)< H(\bar{x})+ r_0$, we have from \eqref{ArgminF} that
    \begin{align}
        &\dist(x, \Argmin H) = \dist \left(x,\Argmin H_{\bar{\lambda}} \cap G^{-1}\{b\}\right)
        \overset{\mathop{(a)}}{\leq} \kappa \dist(x, \Argmin H_{\bar{\lambda}}) \nonumber \\
        & \overset{(b)}\leq \kappa \bar{c}\,(H_{\bar{\lambda}}(x) - H_{\bar{\lambda}}(\bar{x}))^{1-\alpha}
        \overset{(c)} = \kappa \bar{c}\,(H(x) - H(\bar{x}))^{1-\alpha}, \nonumber
    \end{align}
    where $(a)$ holds for some constant $\kappa > 0$ (independent of $x$) thanks to \cite[Corollary 3]{bauschke1999strong}, (b) holds because of the definition of $H_{\bar \lambda}$, the fact that $Gx = G\bar x = b$ and \eqref{ArgminFineq}, and (c) follows from the definition of $H_{\bar \lambda}$ and the fact that $Gx = G\bar x = b$. The conclusion concerning KL property now follows immediately upon invoking \cite[Theorem~5]{bolte2017error}.
\end{proof}

\section{Numerical experiments} \label{section_numerical experiments}
\add{In this section, we perform numerical experiments for $\Alg$ on instances of \eqref{reform_prob3} (or, equivalently, \eqref{reform_prob2}). For notational simplicity, we describe our implementation based on the problem formulation \eqref{reform_prob2}. Recall that we use bold-faced letters to denote vectors and matrices with complex entries.
By Remark~\ref{choose_delta} and Corollary~\ref{local_rate}, we can deduce the following asymptotic bounds  for the sequence $\{({\bm x}^t, {\bm Y}^t)\}$
generated by $\Alg$ with $\delta = \frac{1}{2}$:
\begin{align*}
   & \left|  \| \Pi_{\Omega} (w\circ({\bm x}^t - \bar{\bm x}))\|_1 - \theta \right| = {\cal O} \left( (t+1)^{-1/2} \right), \ \  \|{\cal H}(\bm{x}^t) - \bm{Y}^t\|_F  = {\cal O} \left( (t+1)^{-1/2} \right), \\
   & {\rm dist}((\bm{x}^t, \bm{Y}^t), \Argmin F) = {\cal O}\left( (t+1)^{-1/2^{m+n+q}} \right),
\end{align*}
where $\theta$ and $\Argmin F$ are the optimal value  and   the solution set of  \eqref{reform_prob2}, respectively, and $\|\cdot\|_F$ is the Frobenius norm.}

\add{We next discuss how $\Alg$ can be applied to solving \eqref{reform_prob2}. We will study the numerical performance of $\Alg$ on the instances of \eqref{reform_prob2}.
All the numerical tests are performed in MATLAB R2022b on a 64-bit PC with Intel(R) Core(TM) i7-10700 CPU @2.90GHz (16CPUs), 2.9GHz and 32GB of RAM.\footnote{\add{The codes for the numerical tests in this section can be founded in \href{https://github.com/zengliaoyuan/ProxCG\_HankelMatrixCompletion}{https://github.com/zengliaoyuan/ProxCG\_HankelMatrixCompletion}}}}

\add{{\bf Algorithm settings}: We apply $\Alg$ with $\delta = \frac{1}{2}$ and $\beta_0 = 0.3$ to \eqref{reform_prob2}. We let $ H_0=10^{-6} $, $ M_f=0 $ and $ \mu=1 $. Then each iteration of $\Alg$ applied to \eqref{reform_prob2} consists of the following updates:
\begin{equation*}
    \left\{\begin{array}{cl}
        \!\!\! {\bm x}^{t+1} \!\!\!&\!\!\!=\!\!\!\!\!\! \mathop{{\rm argmin}}\limits_{\|{\bm x} - \Pi_\Omega(\bar{\bm x})\|_2 \le \widetilde\sigma }\!\!\!\!\! \| \Pi_{\Omega} (w\!\circ\! ({\bm x} \!-\! \bar{{\bm x}}))\|_1\!\! +\!{\rm Re} \langle \beta_t {\cal H}^*({\cal H}({\bm x}^t)\! -\!{\bm Y}^t), {\bm x}\rangle\! +\! \frac{H_0 + \beta_t \lambda_H}{2}\|{\bm x} \!-\!{\bm x}^t\|_2^2,\!\!\!  \\
        \!\!\! {\bm U}^t\!\!\!& \!\!\!\in \mathop{{\rm Argmin}}\limits_{\|{\bm Y}\|_* \le \sigma}~{\rm Re} \langle {\bm Y}^t - {\cal H}({\bm x}^{t+1}), {\bm  Y} -{\bm Y}^t \rangle, \\  
        \!\!\! {\bm Y}^{t+1}\!\!\!& \!\!\!= {\bm Y}^t + \alpha_t({\bm U}^t - {\bm Y}^t),
    \end{array}\right.
\end{equation*}
 where $\widetilde\sigma =\sigma + \|\Pi_\Omega({\bar{\bm x}})\|_2 + 1$, $\lambda_H = \min \{m,q\}$, and ${\rm Re}({\bm a})$ denotes the real part of a complex number ${\bm a}$.  }

\add{We now describe how to solve the two subproblems presented above.
For the ${\bm x}$-update, we first compute:
\[
    \tilde{{\bm x}}_j^{t+1}  :=  \begin{cases}
                                             \bar{{\bm x}}_j+ {\rm arg} ({\bm c}_j)\max \left\{ |{\bm c}_j| -\frac{w_j}{H_0 + \beta_t \lambda_H}, 0 \right\} & \text{if}\  j \in \Omega, \\
                                            {\bm x}_j^t -\frac{\beta_t}{H_0 + \beta_t\lambda_H} \left[{\cal H^*H}({\bm x}^t)  -{\cal H}^*({\bm Y}^t)\right]_j & \text{if}\  j \notin \Omega,\\
                                         \end{cases}
\]
where ${\bm c}_j = {\bm x}_j^t - \bar{{\bm x}}_j -\frac{\beta_t}{H_0 + \beta_t\lambda_H} \left[{\cal H^*}({\cal H}({\bm x}^t)-{\bm Y}^t)\right]_j$. Then we have
\[
{\bm x}^{t+1} =
\begin{cases}
\tilde{{\bm x}}^{t+1} &
{\rm if\ } \|\tilde{{\bm x}}^{t+1} - \Pi_\Omega(\bar{\bm x})\|_2 \le \widetilde\sigma,\\
\Pi_\Omega(\bar{\bm x}) + \widetilde \sigma \frac{\tilde{{\bm x}}^{t+1}  - \Pi_\Omega(\bar{\bm x})}{\| \tilde{{\bm x}}^{t+1}  - \Pi_\Omega(\bar{\bm x})\|_2}& {\rm otherwise}.
\end{cases}
\]
For the other subproblem, we have
${\bm U}^t  =  -\sigma {\bm u}_{\max}{\bm v}_{\max}^*$,
where ${\bm u}_{\max}$ and ${\bm v}_{\max}$ are left and right singular vectors corresponding to the top singular value of ${\bm Y}^t - {\cal H}({\bm x}^{t+1})$, respectively. More importantly, in our implementation, we {\em do not} form ${\bm Y}^t$ explicitly but maintain its thin SVD triple, and leverage the fact that ${\bm U}^t$ is of rank-one and the rank-one SVD update technique proposed in \cite{brand2006fast} to update this triple; we also take advantage of this triple and the special structure of ${\cal H}({\bm x}^{t+1})$ to compute ${\bm u}_{\max}$ and ${\bm v}_{\max}$ via the MATLAB command {\sf svds}, and take advantage the triple to compute $ {\cal H}^*({\bm Y}^t) $ via fast convolutions in a way similar to \cite[Section~2]{cai2023structured}.}

\add{We initialize $\Alg$ at $({\bm x}^0, {\bm Y}^0) = (\Pi_{\Omega} (\bar{\bm x}), {\cal H}(\Pi_{\Omega} (\bar{\bm x})) )$.\footnote{\add{Here, we use $ {\bm Y}^0 = {\cal H}(\Pi_{\Omega} (\bar{\bm x})) $ the same as that in ADMM below. We do not need to form ${\bm Y}^0$ explicitly in our code because we can deduce that $ {\bm x}^1={\bm x}^0 $ and $ {\bm Y}^1 $ is a zero matrix. Then the SVD triple of $ {\bm Y}^1 $ can be obtained directly.} }
We terminate $\Alg$ once  $t > 50000$. 
}

\add{As a benchmark, we also apply the ADMM to solve \eqref{reform_prob2}, whose iterates are
\begin{equation}\label{ADMM}\left\{
    \begin{array}{cl}
       {\bm x}^{t+1}  \!\!\!\!& =  \mathop{{\rm argmin}}\limits_{\bm x \in \mathbb{C}^n }~ \| \Pi_{\Omega} (w\!\circ\! ({\bm x}\! -\! \bar{{\bm x}}))\|_1 \!+\!{\rm Re} \left\langle {\cal H}^*({\bm \Lambda}^t), {\bm x}\right\rangle + \frac\beta{2}\|{\cal H}({\bm x}) - {\bm Y}^t\|_F^2,  \\[3 pt]
       {\bm Y}^{t+1}  \!\!\!\!& = \mathop{{\rm argmin}}\limits_{\|\bm Y\|_* \le \sigma}~\frac{\beta}{2}\| {\bm Y} - {\cal H}({\bm x}^{t+1}) - \frac{1}{\beta}{\bm \Lambda}^t\|_F^2, \\[3 pt]
       {\bm \Lambda}^{t+1} \!\!\!\!& = {\bm \Lambda}^t + \beta( {\cal H}({\bm x}^{t+1}) - {\bm Y}^{t+1}),
    \end{array} \right.
\end{equation}
where $\beta > 0$. Notice that the ${\bm x}$-update admits a closed form solution, while the ${\bm Y}$-update involves projections onto the nuclear norm ball: the latter necessitates forming the matrix ${\cal H}({\bm x}^{t+1}) + {\bm \Lambda}^t/\beta$ and performing a full SVD, which can be inefficient or prohibitively expensive when the matrix size is huge.
}

\add{We choose $\beta =1$ and initialize ADMM at $({\bm Y}^0, {\bm \Lambda}^0) = \left({\cal H}(\Pi_{\Omega} (\bar{\bm x})),0\right)$. To describe the termination criteria, we first note that the dual problem of \eqref{reform_prob2} is
\begin{equation}\label{dualprob}
  \begin{array}{cl}
    \max\limits_{{\bm \Lambda}\in\mathbb{C}^{m \times q}} & {\rm Re} \langle {\cal H}(\Pi_\Omega(\bar{\bm x})), {\bm \Lambda} \rangle- \sigma\|{\bm \Lambda}\|_2 \\
    {\rm s.t.} & {\cal H}^*({\bm \Lambda})_j = 0\ \text{if }j \notin \Omega,~|{\cal H }^*({\bm \Lambda})_j|\le w_j \ \text{if }j \in \Omega,
  \end{array}
\end{equation}
where $\|{\bm \Lambda}\|_2$ is the largest singular value of ${\bm \Lambda}$.
Since the $\{({\bm x}^t,{\bm Y}^t)\}$ from \eqref{ADMM} converges to a solution of \eqref{reform_prob2} and $\{{\bm \Lambda}^t\}$ converges to a solution of \eqref{dualprob}, we terminate the ADMM when the relative gap (${\rm gap}_{\rm r}(t)$) and relative dual feasibility violation (${\rm feas}_{\rm r}(t)$) defined below are small:\footnote{\add{We can compute $\|{\bm \Lambda}^t\|_2$ directly from the singular values of  ${\bm Y}^{t}$ and  $\mathcal{H}({\bm x}^{t})+\frac{1}{\beta}{\bm \Lambda}^{t-1}$ because the ${\bm Y}$-update suggests that ${\bm Y}^{t}$ and  $\beta\mathcal{H}({\bm x}^{t})+{\bm \Lambda}^{t-1}$ have simultaneous SVD.} }
\[
    {\rm gap}_{\rm r}(t) = \frac{\left| \| \Pi_{\Omega} (w\circ ({\bm x}^{t} - \bar{{\bm x}}))\|_1 - {\rm Re}\langle {\cal H}(\Pi_\Omega(\bar {\bm x})), {\bm \Lambda}^{t} \rangle+ \sigma\|{\bm \Lambda}^{t}\|_2 \right|}{\max\{1,\| \Pi_{\Omega} (w\circ ({\bm x}^{t} - \bar{{\bm x}}))\|_1\}},
\]
\[
    {\rm feas}_{\rm r}(t) =\frac{ \sum_{j \notin \Omega}|{\cal H }^*({\bm \Lambda}^t)_j|+ \sum_{j \in \Omega} \max\left( |{\cal H }^*({\bm \Lambda}^t)_j| - w_j, 0 \right)}{\max\{1,\|{\bm \Lambda}^t\|_2\}}.
\]
Specifically, we terminate ADMM when $t>10^4$  or $\max\{{\rm gap}_{\rm r}(t),2\cdot{\rm feas}_{\rm r}(t)\} < 0.1$. We are not using primal feasibility violation as a termination criterion as we will report the approximate primal feasibility violation at termination; see {\bf rel}$_{\rm feas}$ in Table~\ref{tab:ProxADMM} below.}

\add{{\bf Data generation}:
We generate a spectrally sparse $\bm{ox}\in \mathbb{C}^n$ with exactly $r$ active frequencies following the procedure in
\cite[Section III.A]{cai2021accelerated} with frequency setting (b). We set $\sigma = 0.97\times\|{\cal H}(\bm{ox})\|_*$ and we generate $\bar{\bm x}$ by adding Laplacian noise
with mean 0 and variance $10^{-4}$ to the real and imaginary parts of $\bm{ox}$. Finally, we uniformly sample $[\alpha \times n]$ entries from $\bar{\bm x}$ and record the indices of those entries as $\Omega$.}

\add{{\bf Numerical result}: We consider $r = 7$ and $n = 2^{j}$ with $j\in \{10,12,14, 16\}$. For each problem size, we fix $\alpha$ at 0.4 and generate 10 random test instances as described above.
Our computational results comparing the performance of  $\Alg$ and ADMM, averaged over the $10$ random instances, are presented in Table~\ref{tab:ProxADMM}.}
\add{Here, \textbf{size} represents the dimension of $\bm{ox}$,
\textbf{err}$:=\| \bm{x}_{\rm out} - \bm{ox}\|_2 / \| \bm{ox}\|_2$, where ${\bm x}_{\rm out}$ is the last iterate returned by the algorithm,  \textbf{obj} stands for the objective value at ${\bm x}_{\rm out}$, \textbf{iter}  stands for the number of iterations,
\textbf{cpu} is the CPU time, $\textbf{rel}.\sigma_r$ and $\textbf{rel}.\sigma_{r+1}$ are defined as $\sigma_r / \sigma_1$, $\sigma_{r+1}/ \sigma_1$, where $\sigma_1$, $\sigma_r$, $\sigma_{r+1}$ are the
largest, $r$-th largest and $(r+1)$-th largest singular values of ${\cal H}(\bm x_{\rm out})$, respectively, and \textbf{rel}$_{\rm feas}:= \sum_{j=1}^{r+1}\sigma_j / \sigma -1$.\footnote{\add{We do not use $\| {\cal H}(\bm x_{\rm out})\|_*$ in \textbf{rel}$_{\rm feas}$ because it requires the full set of singular values of $ {\cal H}(\bm x_{\rm out}) $, and is prohibitively expensive to compute in MATLAB for large $ n $.}} \add{In Table~\ref{tab:ProxADMM}, there is no result of ADMM with $ n=2^{16} $ because the computer runs out of memory when performing the full SVD in the updating of the variable $ Y $ in \eqref{ADMM}.}
}

\begin{table}[htbp]
  \centering
  \caption{\add{Comparing $\Alg$ and ADMM on solving low rank Hankel matrix completion problems.}}
  \label{tab:ProxADMM}
  \add{
  \scalebox{0.75}{
  \begin{tabular}{ccccccccc}
  \toprule
 \textbf{method}&\textbf{size} & \textbf{err} & \textbf{obj} & \textbf{iter} & \textbf{cpu} &\textbf{rel}.$\sigma_r$ &\textbf{rel}.$\sigma_{r+1}$ & \textbf{rel}$_{\rm feas}$ \\
  \midrule
    $\Alg$&     $ 2^{10} $ &   0.0332  & 2.01e+04  & 50000 &  1279.34 & 5.27e-01 & 2.26e-04 & 6.92e-04 \\
    ADMM  &     $ 2^{10} $ &   0.0337  & 2.16e+04  &   137.20 &    70.47 & 5.26e-01 & 1.41e-05 & 2.81e-06 \\
    $\Alg$  &   $ 2^{12} $ &   0.0314  & 3.14e+05  & 50000 &  4514.02 & 5.86e-01 & 1.33e-04 & 6.70e-04 \\
    ADMM  &     $ 2^{12} $ &   0.0320  & 3.38e+05  &   172.70 &  2059.07 & 5.85e-01 & 3.50e-06 & 6.43e-07 \\
   $\Alg$  &    $ 2^{14} $ &   0.0298  & 4.94e+06  & 50000 & 16387.54 & 5.81e-01 & 8.89e-05 & 6.64e-04 \\
    ADMM  &     $ 2^{14} $ &   0.0304  & 5.32e+06  &   248.70 & 95554.93 & 5.80e-01 & 5.36e-07 & 9.36e-08 \\
    $\Alg$  &   $ 2^{16} $ &   0.0291  & 7.73e+07  & 50000 & 63993.27 & 5.72e-01 & 6.94e-05 & 6.94e-04 \\
  \bottomrule
\end{tabular}}
}
\end{table}

\appendix

\section{\add{An auxiliary lemma}}

\begin{color}{blue}
This lemma establishes an upper bound on an auxiliary quadratic penalty function along the sequence generated by $\Alg$.
\begin{lemma} \label{theorem_f+beta_h}
    Consider \eqref{problem}.
    Suppose that Assumptions~\ref{Assumption 1}, \ref{Assumption 2} hold and $(x^*,y^*)$ solves \eqref{problem}. Let
    $\{(x^t,y^t)\}$ be generated by $\Alg$. Then the following statements hold.
    \begin{enumerate} [{\rm (i)}]
      \item If $\mu \in (0,1)$ in Assumption \ref{Assumption 2}, then for all $t \ge 2$,
    \begin{align}
        &f(x^{t}) + g(y^{t}) +\frac{\beta_{t-1}}{2}\|Ax^t+By^t-c\rVert^2 - f(x^*) -g(y^*) \nonumber \\
        &\leq  \frac{\omega_1}{t(t+1)}+\frac{\omega_2}{(t+1)^{1-\delta}}+\frac{\omega_3}{(t+1)^{\nu}}+ \frac{\omega_4}{(t+1)^{\mu}},\nonumber
    \end{align}
    where $\omega_1$, $\omega_2$, $\omega_3$ and $\omega_4$ are defined in \eqref{w1w2w3} and \eqref{w3w4}.
    \item If $\mu =1$ in Assumption \ref{Assumption 2}, then for all $t \ge 2$,
    \begin{align}
      &f(x^{t}) + g(y^{t}) +\frac{\beta_{t-1}}{2}\|Ax^t+By^t-c\rVert^2 - f(x^*) -g(y^*) \nonumber \\
      &\leq   \frac{\omega_1}{t(t+1)}+\frac{\omega_2}{(t+1)^{1-\delta}}+\frac{\omega_3}{(t+1)^{\nu}} +\frac{\omega_5}{t+1}, \nonumber
    \end{align}
    where $\omega_1 $, $\omega_2$, $\omega_3$ and $\omega_5$ are defined in \eqref{w1w2w3} and \eqref{w3w4}.
    \end{enumerate}
\end{lemma}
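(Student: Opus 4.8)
Write $h(x,y):=\frac12\|Ax+By-c\|^2$ as in Lemma~\ref{lemma_of_h}, and introduce the potential $\Phi_t:=f(x^t)+g(y^t)+\beta_{t-1}h(x^t,y^t)-f(x^*)-g(y^*)$ for $t\ge1$, so that $\Phi_t$ is exactly the quantity to be bounded and $\Phi_1=\vartheta/2$. The plan is to prove a one-step recursion $\Phi_{t+1}\le(1-\alpha_t)\Phi_t+\mathrm{error}_t$ for $t\ge1$, multiply it by the weight $w_t:=(t+1)(t+2)$, and telescope using the identity $w_t(1-\alpha_t)=w_{t-1}$ (valid because $\alpha_t=2/(t+2)$); the boundary term $w_0\Phi_1=\vartheta$ produces the $\omega_1/(t(t+1))$ piece. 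An easy induction from \eqref{get_xk}--\eqref{get_yk} gives $x^t\in\dom f$ and $u^t,y^t\in\dom g$ for all $t$, so Assumptions~\ref{Assumption 1} and~\ref{Assumption 2} are in force along the sequence.

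For the $x$-step I will use that \eqref{get_xk} minimizes an $(H_t+\lambda_A\beta_t)$-strongly convex function: its three-point property, combined with the H\"older descent \eqref{Holder_property} for $f_1$, the $\lambda_A\beta_t$-Lipschitz continuity of the gradient of $x\mapsto\tfrac{\beta_t}{2}\|Ax+By^t-c\|^2$, and convexity of $f_1$, yields for every $x$
\[
f(x^{t+1})+\beta_th(x^{t+1},y^t)\le f(x)+\beta_th(x^t,y^t)+\beta_t\langle Ax^t+By^t-c,A(x-x^t)\rangle+\tfrac{H_t+\lambda_A\beta_t}{2}\big(\|x-x^t\|^2-\|x-x^{t+1}\|^2\big)+r^x_t,
\]
where $r^x_t\ge0$ is the gap in Young's inequality $\tfrac{M_f}{\mu+1}s^{\mu+1}\le\tfrac{H_t}{2}s^2+r^x_t$: one checks $r^x_t=0$ when $\mu=1$ (then $H_t\equiv\widetilde H_0\ge M_f$) and $r^x_t={\cal O}(t^{-(1+\mu)})$ when $\mu\in(0,1)$, the constant being arranged so that $\sum_{t=1}^{k-1}w_tr^x_t$ is of order $\omega_0\,k^{2-\mu}$ — this is exactly where the schedule $H_t=\widetilde H_0t^{1-\mu}$ is needed. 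Then I substitute $x=(1-\alpha_t)x^t+\alpha_tx^*$, discard the non-positive term $-\tfrac{H_t+\lambda_A\beta_t}{2}\|x-x^{t+1}\|^2$, and use convexity of $f$ together with $\|x-x^t\|=\alpha_t\|x^*-x^t\|\le\alpha_tD_f$.

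For the $y$-step, applying \eqref{Holder_property} to $g_1$ along $y^{t+1}-y^t=\alpha_t(u^t-y^t)$, the $\lambda_B\beta_t$-Lipschitz continuity of the gradient of $y\mapsto\tfrac{\beta_t}{2}\|Ax^{t+1}+By-c\|^2$, convexity of $g_2$ on $[y^t,u^t]$, optimality of the linear oracle \eqref{get_uk} tested at $y^*$, and convexity of $g_1$ give
\[
g(y^{t+1})+\beta_th(x^{t+1},y^{t+1})\le g(y^t)+\beta_th(x^{t+1},y^t)+\alpha_t\big(g(y^*)-g(y^t)\big)+\alpha_t\beta_t\langle Ax^{t+1}+By^t-c,By^*-By^t\rangle+r^y_t,
\]
with $r^y_t\le\tfrac{M_g}{\nu+1}\alpha_t^{\nu+1}D_g^{\nu+1}+\tfrac{\lambda_B\beta_t}{2}\alpha_t^2D_g^2$ after bounding $\|u^t-y^t\|\le D_g$. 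Adding this to the $x$-step estimate cancels $\beta_th(x^{t+1},y^t)$; subtracting $f(x^*)+g(y^*)$ and writing $(1-\alpha_t)\beta_th(x^t,y^t)=(1-\alpha_t)\beta_{t-1}h(x^t,y^t)+(1-\alpha_t)(\beta_t-\beta_{t-1})h(x^t,y^t)$ turns the left side into $\Phi_{t+1}$ and the right side into $(1-\alpha_t)\Phi_t$, \emph{plus exactly the left-hand side of the inequality in Lemma~\ref{lemma_of_h} with $x_1=x^t$, $x_2=x^{t+1}$, $y=y^t$}, plus $\tfrac{H_t+\lambda_A\beta_t}{2}\alpha_t^2D_f^2+r^x_t+r^y_t$. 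Lemma~\ref{lemma_of_h} then bounds that bracket by $\alpha_t\beta_t\langle Ax^t-Ax^{t+1},By^t-By^*\rangle$, giving the recursion
\[
\Phi_{t+1}\le(1-\alpha_t)\Phi_t+\alpha_t\beta_t\langle Ax^t-Ax^{t+1},By^t-By^*\rangle+\tfrac{H_t+\lambda_A\beta_t}{2}\alpha_t^2D_f^2+r^x_t+r^y_t,\qquad t\ge1.
\]

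Finally, multiplying by $w_t$ and telescoping from $t=1$ to $k-1$ gives $k(k+1)\Phi_k\le\vartheta+\sum_{t=1}^{k-1}w_t\big[\alpha_t\beta_t\langle Ax^t-Ax^{t+1},By^t-By^*\rangle+\tfrac{H_t+\lambda_A\beta_t}{2}\alpha_t^2D_f^2+r^x_t+r^y_t\big]$. Since $w_t\alpha_t\beta_t=\alpha_t\beta_t(t+1)(t+2)$, Lemma~\ref{lemma_of_sum_Axk-Axk+1} bounds the first sum by $2^{\delta+3}\beta_0D_2+\tfrac{16+8\delta}{1+\delta}\beta_0D_2(k+1)^{1+\delta}$; using $w_t\alpha_t^2=\tfrac{4(t+1)}{t+2}\le4$, $H_t=\widetilde H_0t^{1-\mu}$, $\beta_t=\beta_0(t+1)^\delta$ and elementary integral estimates bounds the $D_f^2$-sum by a constant multiple of $\widetilde H_0D_f^2k^{2-\mu}+\lambda_AD_f^2\beta_0(k+1)^{1+\delta}$; the $r^x_t$-sum is $0$ if $\mu=1$ and of order $\omega_0k^{2-\mu}$ if $\mu\in(0,1)$; and the $r^y_t$-sum is bounded by a constant multiple of $M_gD_g^{\nu+1}k^{2-\nu}+\lambda_BD_g^2\beta_0(k+1)^{1+\delta}$. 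Dividing by $k(k+1)$ and tidying the powers of $k$ (using $1+\delta,2-\mu,2-\nu\ge1$, $\tfrac{k^a}{k+1}\le(k+1)^{a-1}$ for $a\in[0,1]$, and $\tfrac{k+1}{k}\le2$) yields the two displayed bounds, with $\vartheta$ and the $D_2$-constant making up $\omega_1$, the three $(k+1)^{1+\delta}$-contributions making up $\omega_2$, the $g_1$-H\"older term giving $\omega_3$, and the $f_1$-related terms giving $\omega_4$ (resp.\ $\omega_5$ when $\mu=1$, where $r^x_t\equiv0$ removes $\omega_0$). The delicate points are: arranging the $x$-step estimate so that, after combination with the $y$-step estimate, it reproduces the left-hand side of Lemma~\ref{lemma_of_h} verbatim — which forces the test point $(1-\alpha_t)x^t+\alpha_tx^*$ and the decision not to complete the square into $\beta_th(x,y^t)$ — and checking that the schedule $H_t=\widetilde H_0t^{1-\mu}$ keeps $\sum_tw_tr^x_t$ of order $k^{2-\mu}$, which is precisely what produces the $t^{-\mu}$ rate in $\tau_t$.
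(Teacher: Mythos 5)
Your proposal is correct and follows essentially the same route as the paper's proof: the same one-step recursion obtained from the prox step tested at $(1-\alpha_t)x^t+\alpha_t x^*$ and the conditional-gradient step tested at $y^*$ via \eqref{get_uk}, the same invocation of Lemma~\ref{lemma_of_h} to collapse the penalty cross terms, the same weights $(t+1)(t+2)$ and telescoping, and the same use of Lemma~\ref{lemma_of_sum_Axk-Axk+1}. The only (immaterial) differences are that you use the three-point/strong-convexity form of the prox optimality and then discard the extra nonpositive term, and that you absorb the H\"older remainder of $f_1$ via a Young-type gap $r^x_t=\mathcal{O}(t^{-(1+\mu)})$ fixed a priori, whereas the paper keeps $\zeta_t$ and bounds it a posteriori on the index set where it is positive — both devices yield $\sum_t (t+1)(t+2)\zeta_t=\mathcal{O}(k^{2-\mu})$ with constants compatible with $\omega_0$, and both vanish when $\mu=1$.
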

\begin{proof}
    Define $h(x,y) =\frac{1}{2}\|Ax+By-c\rVert^2$.
    Then  we  have for all $t \ge 0$ that
    \begin{align}
        &f(x^{t+1})+\beta_th(x^{t+1},y^t) +\langle \nabla f_1(x^t), x^{t+1}-x^t\rangle +\frac{H_t}{2}\|x^{t+1}-x^t\rVert^2 \nonumber \\
        &\overset{\mathop{(a)}}{\leq}  f_1(x^{t+1})+f_2(x^{t+1}) +\langle \nabla f_1(x^t), x^{t+1}-x^t\rangle +\frac{H_t}{2}\|x^{t+1}-x^t\rVert^2
         \nonumber \\
        &\ \ \ +\beta_t\langle A^*(Ax^t+By^t-c), x^{t+1}-x^t\rangle +\beta_th(x^t,y^t) +\frac{\beta_t\lambda_A}{2}\| x^{t+1}-x^t\rVert^2 \nonumber\\
        &\overset{\mathop{(b)}}{\leq} f_1(x^{t+1})+ f_2(x^t+\alpha_t(x^*-x^t)) +\alpha_t\langle \nabla f_1(x^t), x^*-x^t\rangle  \nonumber \\
        &\ \ \ +\alpha_t\beta_t\langle A^*(Ax^t+By^t-c), x^*-x^t\rangle + \beta_th(x^t,y^t) +
        \frac{H_t+\beta_t\lambda_A}{2}\alpha_t^2\|x^*-x^t\rVert^2 \nonumber \\
        &\overset{\mathop{(c)}}{\leq} f_1(x^{t+1})+ (1-\alpha_t)f_2(x^t)+\alpha_tf_2(x^*) +\alpha_t\langle \nabla f_1(x^t), x^*-x^t\rangle  \nonumber \\
        &\ \ \ +\alpha_t\beta_t\langle A^*(Ax^t+By^t-c), x^*-x^t\rangle + \beta_th(x^t,y^t) +
        \frac{H_t+\beta_t\lambda_A}{2}\alpha_t^2\|x^*-x^t\rVert^2, \nonumber
    \end{align}
    where $(a)$ holds because $h(\cdot,y)$ has Lipschitz continuous gradient with Lipschitz constant
    $\lambda_A$, $(b)$ holds because of \eqref{get_xk}
    and the fact that $x^t+\alpha_t(x^*-x^t) \in \mathrm{dom} f$ and $(c)$ holds because $f_2$ is convex.

    Define $\mathcal{L}_{f_1}(x^{t+1}, x^t) = f_1(x^{t+1})-f_1(x^t)-\langle \nabla f_1(x^t),x^{t+1}-x^t\rangle$ and rearrange terms in the above display, we have upon invoking the definition of $R^t$ in \eqref{Rt} that
    \begin{align}
        &f(x^{t+1}) +\beta_th(x^{t+1},y^t) \nonumber \\
        &\leq f_1(x^t)+\mathcal{L}_{f_1}(x^{t+1}, x^t) +(1-\alpha_t)f_2(x^t)+\alpha_tf_2(x^*)+\alpha_t\langle \nabla f_1(x^t), x^*-x^t\rangle  \nonumber \\
        &\ \ \ +\!\frac{H_t+\beta_t\lambda_A}{2}\alpha_t^2\|x^*\!\!-\!x^t\rVert^2\!+\!\alpha_t\beta_t\langle A^*R^t, x^*\!\!-\!x^t\rangle \!+\! \beta_th(x^t,y^t) \!-\!\frac{H_t}{2}\|x^{t+1}\!\!-\!x^t\rVert^2\!\!\!\!\!\! \nonumber \\
        &\overset{\mathop{(a)}}{\leq}  (1\!-\!\alpha_t)f_1(x^t)\!+\!(1\!-\!\alpha_t)f_2(x^t) \!+\!\alpha_t f_1(x^*)\!+\!\alpha_tf_2(x^*)\!+\!\frac{H_t\!+\!\beta_t\lambda_A}{2}\alpha_t^2\|x^*\!-\!x^t\rVert^2\!\!\!\!\!\! \nonumber \\
        &\ \ \ +\alpha_t\beta_t\langle A^*R^t, x^*-x^t\rangle + \beta_th(x^t,y^t) -\frac{H_t}{2}\|x^{t+1}-x^t\rVert^2+\mathcal{L}_{f_1}(x^{t+1}, x^t)\!\!\!\!  \nonumber \\
        &\leq (1\!-\!\alpha_t)(f_1(x^t)\!+\!f_2(x^t)) +\alpha_t f_1(x^*)+\alpha_tf_2(x^*)+\frac{H_t+\beta_t\lambda_A}{2}\alpha_t^2\|x^*-x^t\rVert^2 \nonumber \\
        &\ \ \ +\!\alpha_t\beta_t\langle A^*R^t, x^*\!-\!x^t\rangle + \beta_th(x^t,y^t) \!-\!\frac{H_t}{2}\|x^{t+1}\!-\!x^t\rVert^2 \!+\!\frac{M_f}{\mu+1}\|x^{t+1}-x^t\rVert^{\mu+1},\!\!\!\!\!\!\!  \label{f_beta_h}
    \end{align}
    where $(a)$ holds because $f_1$ is convex and the last inequality follows from \eqref{Holder_property}.

    Define
    \begin{align}\label{Definition_zeta}
      \zeta_{t} = -\frac{H_t}{2}\|x^{t+1}-x^t\rVert^2 +\frac{M_f}{\mu+1}\|x^{t+1}-x^t\rVert^{\mu+1}
    \end{align}
    for notational simplicity. Then, by rearranging  terms in \eqref{f_beta_h} and recalling the definition of $R^t$ in \eqref{Rt}, we obtain that for all $t\ge 0$,
     \begin{align}
        &f(x^{t+1}) +\beta_th(x^{t+1},y^t) -f(x^*)\nonumber\\
        &\leq (1-\alpha_t)(f(x^t)-f(x^*))+\frac{H_t+\beta_t\lambda_A}{2}\alpha_t^2\|x^*-x^t\rVert^2  \nonumber \\
        &\ \ \ +\alpha_t\beta_t\langle A^*(Ax^t+By^t-c), x^*-x^t\rangle+ \beta_th(x^t,y^t) +\zeta_t.  \label{f+beta_h-f*_zeta}
    \end{align}

     Next, we deduce an analogous relation involving $g$. To this end, notice that for all $t\ge 0$,
    \begin{align}
        &g(y^{t+1}) + \beta_th(x^{t+1},y^{t+1}) \nonumber \\
        &\overset{\mathop{(a)}}{\leq} g_1(y^{t})+ \langle \nabla g_1(y^t)+\beta_tB^*\widetilde R^t, y^{t+1}-y^t\rangle +
        \frac{M_g}{\nu+1}\|y^{t+1}-y^t\rVert^{\nu+1} \nonumber\\
        &\ \ \ +g_2(y^{t+1}) +\beta_th(x^{t+1},y^t)+\frac{\beta_t\lambda_B}{2}\|y^{t+1}-y^t\rVert^2 \nonumber \\
        &\overset{\mathop{(b)}}{\leq} g_1(y^{t})+ \alpha_t\langle \nabla g_1(y^t)+\beta_tB^*\widetilde R^t, u^t-y^t\rangle +
        \frac{M_g}{\nu+1}\alpha_t^{\nu+1}\|u^t-y^t\rVert^{\nu+1} \nonumber\\
        &\ \ \ +\alpha_tg_2(u^t)+(1-\alpha_t)g_2(y^{t}) +\beta_th(x^{t+1},y^t)+\frac{\beta_t\lambda_B}{2}\alpha_t^2\|u^t-y^t\rVert^2\nonumber \\
        &\overset{\mathop{(c)}}{\leq} g_1(y^{t})+ \alpha_t\langle \nabla g_1(y^t)+\beta_tB^*\widetilde R^t, y^*-y^t\rangle +
        \frac{M_g}{\nu+1}\alpha_t^{\nu+1}\|u^t-y^t\rVert^{\nu+1} \nonumber\\
        &\ \ \ +\alpha_tg_2(y^*)+(1-\alpha_t)g_2(y^{t}) +\beta_th(x^{t+1},y^t)+\frac{\beta_t\lambda_B}{2}\alpha_t^2\|u^t-y^t\rVert^2 \nonumber \\
        &\overset{\mathop{(d)}}{\leq} g_1(y^{t})+\alpha_tg_1(y^*)-\alpha_tg_1(y^t)+ \alpha_t\beta_t\langle B^*\widetilde R^t, y^*-y^t\rangle +
        \frac{M_g}{\nu+1}\alpha_t^{\nu+1}\|u^t-y^t\rVert^{\nu+1} \nonumber\\
        &\ \ \ +\frac{\beta_t\lambda_B}{2}\alpha_t^2\|u^t-y^t\rVert^2+\alpha_tg_2(y^*)+(1-\alpha_t)g_2(y^{t}) +\beta_th(x^{t+1},y^t) \nonumber \\
        &\overset{\mathop{(e)}}{\leq} (1-\alpha_t)g(y^{t})+\alpha_tg(y^*)+ \alpha_t\beta_t\langle B^*\widetilde R^t, y^*-y^t\rangle  \nonumber\\
        &\ \ \ +\beta_th(x^{t+1},y^t)+
        \frac{M_g}{\nu+1}\alpha_t^{\nu+1}D_g^{\nu+1}+\frac{\beta_t\lambda_B}{2}\alpha_t^2D_g^2 ,  \nonumber
    \end{align}
    where we used the definition of $\widetilde R^t$ in \eqref{tRt} and $(a)$ holds thanks to \eqref{Holder_property} and the fact that $h(x,\cdot)$ has Lipschitz continuous gradient with Lipschitz constant $\lambda_B$,
    $(b)$ holds because of the convexity of $g_2$ as well as the definition of $y^{t+1}$ in \eqref{get_yk}, $(c)$ holds due to \eqref{get_uk},
    $(d)$ holds since $g_1$ is convex and $(e)$ holds because of the definition of $D_g$.

    Rearranging terms in the above inequality, we obtain upon recalling the definition of $\widetilde R^t$ in \eqref{tRt} that
    \begin{align}
        &g(y^{t+1}) + \beta_th(x^{t+1},y^{t+1}) - g(y^*)\nonumber\\
        &\leq (1-\alpha_t)(g(y^t)-g(y^*))+ \alpha_t\beta_t\langle B^*(Ax^{t+1}+By^t-c), y^*-y^t\rangle  \nonumber \\
        &\ \ \ + \beta_th(x^{t+1},y^t)+
        \frac{M_g}{\nu+1}\alpha_t^{\nu+1}D_g^{\nu+1}+\frac{\beta_t\lambda_B}{2}\alpha_t^2D_g^2.   \label{g+beta_h-g*}
    \end{align}
    Summing \eqref{f+beta_h-f*_zeta} and \eqref{g+beta_h-g*}, we have upon rearranging terms that for all $t\ge 1$,
    \begin{align}
        &f(x^{t+1})+ g(y^{t+1})+ \beta_{t}h(x^{t+1},y^{t+1})-f(x^*)-g(y^*)  \nonumber \\
        &\leq (1-\alpha_t)\left(f(x^t)+g(y^t)-f(x^*)-g(y^*)\right) + \beta_th(x^t,y^t)\nonumber \\
        &\ \ \ +\alpha_t\beta_t\langle A^*(Ax^t+By^t-c), x^*-x^t\rangle + \alpha_t\beta_t\langle B^*(Ax^{t+1}+By^t-c), y^*-y^t\rangle \nonumber \\
        &\ \ \ +\frac{M_g}{\nu+1}\alpha_t^{\nu+1}D_g^{\nu+1}+\frac{\beta_t\lambda_B}{2}\alpha_t^2D_g^2+ \frac{H_t+\beta_t\lambda_A}{2}\alpha_t^2
        \|x^*-x^t\rVert^2+\zeta_t. \nonumber\\
        & = (1-\alpha_t)\left(f(x^t)+g(y^t)+\beta_{t-1}h(x^t,y^t)-f(x^*)-g(y^*)\right)\nonumber \\
        &\ \ \ + (1-\alpha_t)(\beta_t-\beta_{t-1})h(x^t,y^t) +\alpha_t\beta_th(x^t,y^t) \nonumber \\
        &\ \ \ +\alpha_t\beta_t\langle A^*(Ax^t+By^t-c), x^*-x^t\rangle + \alpha_t\beta_t\langle B^*(Ax^{t+1}+By^t-c), y^*-y^t\rangle \nonumber \\
        &\ \ \ +\frac{M_g}{\nu+1}\alpha_t^{\nu+1}D_g^{\nu+1}+\frac{\beta_t\lambda_B}{2}\alpha_t^2D_g^2+ \frac{H_t+\beta_t\lambda_A}{2}\alpha_t^2
        \|x^*-x^t\rVert^2+\zeta_t. \nonumber
    \end{align}
    Using this and applying Lemma~\ref{lemma_of_h} with $x_1 = x^t $, $x_2 = x^{t+1}$ and $y = y^t$, we have
    \begin{align}
        &f(x^{t+1})+ g(y^{t+1})+ \beta_{t}h(x^{t+1},y^{t+1})-f(x^*)-g(y^*)  \nonumber \\
        &\leq (1-\alpha_t)\left(f(x^t)+g(y^t)+\beta_{t-1}h(x^t,y^t)-f(x^*)-g(y^*)\right)  \nonumber\\
        &\ \ \ +\alpha_t\beta_t\langle Ax^t-Ax^{t+1}, By^t-By^* \rangle +\frac{M_g}{\nu+1}\alpha_t^{\nu+1}D_g^{\nu+1}+\frac{\beta_t\lambda_B}{2}\alpha_t^2D_g^2  \nonumber \\
        &\ \ \
        +\frac{H_t+\beta_t\lambda_A}{2}\alpha_t^2\|x^*-x^t\rVert^2+\zeta_t.     \nonumber
    \end{align}

    Next, multiplying $(t+1)(t+2)$ to  both sides of the above inequality and rearranging terms, we obtain upon noting $(t+1)(t+2)(1-\alpha_t) = t(t+1)$ and letting
    $\Upsilon_t = t(t+1)(f(x^t)+g(y^t)+\beta_{t-1}h(x^t,y^t)-f(x^*)-g(y^*))$ that for all $t \geq 1$,
    \begin{align}
        &\Upsilon_{t+1} - \Upsilon_t\nonumber\\
        &\leq(t+1)(t+2)\alpha_t\beta_t\langle Ax^t-Ax^{t+1}, By^t-By^*\rangle + (t+1)(t+2)\frac{M_g}{\nu+1}\alpha_t^{\nu+1}D_g^{\nu+1} \nonumber \\
        &\ \ \ +(t+1)(t+2)\frac{\beta_t\lambda_B}{2}\alpha_t^2D_g^2 + (t+1)(t+2)\frac{H_t+\beta_t\lambda_A}{2}\alpha_t^2D_f^2 +(t+1)(t+2) \zeta_t\nonumber \\
        &=(t+1)(t+2)\alpha_t\beta_t\langle Ax^t-Ax^{t+1}, By^t-By^*\rangle+\frac{2^{\nu+1}}{\nu+1}\frac{t+1}{(t+2)^{\nu}}M_gD_g^{\nu+1}  \nonumber \\
        &\ \ \ +\frac{2(t+1)}{t+2}\beta_t\lambda_BD_g^2 +2(H_t+\beta_t\lambda_A)\frac{t+1}{t+2}D_f^2+(t+1)(t+2) \zeta_t \nonumber \\
        &\leq (t+1)(t+2)\alpha_t\beta_t\langle Ax^t-Ax^{t+1}, By^t-By^*\rangle +\frac{2^{\nu+1}}{\nu+1}(t+1)^{1-\nu}M_gD_g^{\nu+1}  \nonumber \\
        &\ \ \ +2\beta_t\lambda_BD_g^2 +2(H_t+\beta_t\lambda_A)D_f^2+(t+1)(t+2) \zeta_t. \nonumber
    \end{align}

    The above inequality further implies that for all $t\ge 2$,
    \begin{align}
        &\Upsilon_t = \sum_{k=1}^{t-1} (\Upsilon_{k+1} - \Upsilon_k) +\Upsilon_1 \nonumber \\
        &\leq \sum\limits_{k=1}^{t-1} \Big[(k+1)(k+2)\alpha_k\beta_k\langle Ax^k-Ax^{k+1}, By^k-By^*\rangle +(k+1)(k+2) \zeta_k  \nonumber \\
        &\ \ \ \left.+\frac{2^{\nu+1}}{\nu+1}(k+1)^{1-\nu}M_gD_g^{\nu+1} +2\beta_k\lambda_BD_g^2 +2(H_k+\beta_k\lambda_A)D_f^2  \right] + \Upsilon_1 \nonumber  \\
        &\overset{\mathop{(a)}}{\leq} \left(2^{\delta +3}\beta_0+\frac{(16+8\delta)\beta_0}{1+\delta}(t+1)^{1+\delta}\right)D_2 +\sum\limits_{k=1}^{t-1}(k+1)(k+2)\zeta_k \nonumber \\
        &\ \ \ +\sum\limits_{k=1}^{t-1}\frac{2^{\nu+1}}{\nu+1}(k+1)^{1-\nu}M_gD_g^{\nu+1} +2(\lambda_AD_f^2+ \lambda_BD_g^2)\sum\limits_{k=1}^{t-1} \beta_k
        +2D_f^2\sum\limits_{k=1}^{t-1}H_k   +\Upsilon_1    \nonumber \\
        &\overset{\mathop{(b)}}{\leq} \left(2^{\delta +3}\beta_0+\frac{(16+8\delta)\beta_0}{1+\delta}(t+1)^{1+\delta} \right)D_2+\Upsilon_1+\sum\limits_{k=1}^{t-1}(k+1)(k+2)\zeta_k  \nonumber \\
        &\ \ \ + \frac{2^{\nu+1}}{\nu+1}M_gD_g^{\nu+1}(t-1)t^{1 -\nu} +2(\lambda_AD_f^2+ \lambda_BD_g^2)(t-1)\beta_{t-1} +2D_f^2(t-1)H_{t-1}, \nonumber
    \end{align}
    where $(a)$ holds because of Lemma \ref{lemma_of_sum_Axk-Axk+1}, $(b)$ holds because
    $(k+1)^{1-\nu} \leq t^{1-\nu}$, $\beta_k \leq \beta_{t-1}$, $H_k\leq H_{t-1}$ for all $0\leq k\leq t-1$.
    Recall that $\Upsilon_t = t(t+1)(f(x^t)+g(y^t)+\beta_{t-1}h(x^t,y^t)-f(x^*)-g(y^*))$ and $\beta_t = \beta_0 (t+1)^\delta$. Then we have that for all $t \ge 2$,
    \begin{align}
        &f(x^t)+g(y^t)+\beta_{t-1}h(x^t,y^t)-f(x^*)-g(y^*) \nonumber \\
        &\leq \frac{2^{\delta +3}\beta_0D_2+ \Upsilon_1}{t(t+1)}+\frac{(16+8\delta)\beta_0D_2}{1+\delta}\frac{(t+1)^{\delta}}{t}
        +\frac{2^{\nu+1}}{\nu+1}M_gD_g^{\nu+1}\frac{t-1}{t}\frac{t^{1-\nu}}{t+1}
         \nonumber \\
        &\ \ \ +2(\lambda_BD_g^2+\lambda_AD_f^2)\beta_0\frac{t-1}{t}\frac{t^{\delta}}{t+1}\!+\!2D_f^2\frac{t-1}{t}\frac{H_{t-1}}{t+1}+ \sum\limits_{k=1}^{t-1}\frac{(k+1)(k+2)}{t(t+1)}\zeta_k  \nonumber \\
        &\overset{(a)}{\leq} \frac{2^{\delta +3}\beta_0D_2+ \Upsilon_1}{t(t+1)}+\frac{(16+8\delta)\beta_0D_2}{1+\delta}\frac{(t+1)^{\delta}}{t}
        +\frac{2^{\nu+1}}{\nu+1}M_gD_g^{\nu+1}\frac{t^{1-\nu}}{t+1}
         \nonumber \\
        &\ \ \ +2(\lambda_BD_g^2+\lambda_AD_f^2)\beta_0\frac{t^{\delta}}{t+1}+2D_f^2\frac{H_{t-1}}{t+1}+ \sum\limits_{k=1}^{t-1}\frac{(k+1)(k+2)}{t(t+1)}\zeta_k  \nonumber \\
        &\leq  \frac{2^{\delta +3}\beta_0D_2+ \Upsilon_1}{t(t+1)}\!+\!\frac{(16+8\delta)\beta_0D_2}{1+\delta}\frac{(t+1)^{\delta}}{t+1}\frac{t+1}{t}
        \!+\!\frac{2^{\nu+1}}{\nu+1}M_gD_g^{\nu+1}\frac{(t+1)^{1-\nu}}{t+1}
         \nonumber \\
        &\ \ \ +2(\lambda_BD_g^2+\lambda_AD_f^2)\beta_0\frac{(t+1)^{\delta}}{t+1}+2D_f^2\frac{H_{t-1}}{t+1}+ \sum\limits_{k=1}^{t-1}\frac{(k+1)(k+2)}{t(t+1)}\zeta_k   \nonumber \\
        &\overset{(b)}\leq  \frac{2^{\delta +3}\beta_0D_2+ \Upsilon_1}{t(t+1)}+\left(\frac{(32+16\delta)\beta_0D_2}{1+\delta}+2(\lambda_BD_g^2+\lambda_AD_f^2)\beta_0\right)\frac{1}{(t+1)^{1-\delta}}\nonumber \\
        &\ \ \ +\frac{2^{\nu+1}}{\nu+1}M_gD_g^{\nu+1}\frac{1}{(t+1)^{\nu}}+2D_f^2\frac{H_{t}}{t+1}+ \sum\limits_{k=1}^{t-1}\frac{(k+1)(k+2)}{t(t+1)}\zeta_k, \nonumber   \\
        & = \frac{\omega_1}{t(t+1)} + \frac{\omega_2}{(t+1)^{1-\delta}}+ \frac{\omega_3}{(t+1)^{\nu}}+2D_f^2\frac{H_{t}}{t+1}+ \sum\limits_{k=1}^{t-1}\frac{(k+1)(k+2)}{t(t+1)}\zeta_k \label{Gamma_t},
    \end{align}
    where $(a)$ holds because $\frac{t-1}{t} \leq 1$ for all $t \geq 1$, (b) holds because $H_t\ge H_{t-1}$ and $\frac{t+1}{t} \leq 2$ for all $t \geq 2$, and the last equality holds because of the definitions of $\omega_1$, $\omega_2$, $\omega_3$ in \eqref{w1w2w3} and \eqref{w3w4}.
    Now, we derive the desired bounds according to the value of $\mu$.

    \textbf{Case 1:} $\mu \in (0,1)$. Then we have $H_t = \widetilde{H}_0 t^{1-\mu}$ for all $t\ge 1$ due to \eqref{definition_Ht} and \eqref{w4w0H0}.  Therefore, we can deduce from \eqref{Gamma_t} that for all $t\ge 2$,
    \begin{align}
        &f(x^t)+g(y^t)+\beta_{t-1}h(x^t,y^t)-f(x^*)-g(y^*) \nonumber \\
        &\leq  \frac{\omega_1}{t(t\!+\!1)}\!+\!\frac{\omega_2}{(t\!+\!1)^{1-\delta}}
          \!+\!\frac{\omega_3}{(t\!+\!1)^{\nu}}\!+\!2D_f^2\frac{\widetilde{H}_0t^{1-\mu}}{t\!+\!1}\!+\!\frac{1}{t(t\!+\!1)} \sum\limits_{k=1}^{t-1}(k\!+\!1)(k\!+\!2)\zeta_k. \label{f+g+h-f*-g*_holder}
    \end{align}

    Next, we consider the term $ \sum\limits_{k=1}^{t-1}(k+1)(k+2)\zeta_k $. Let ${\frak T} = \{k:\; 1\le k\le t-1,\, \zeta_k > 0\}$. Recall that $\zeta_k = -\frac{H_k}{2}\|x^{k+1}-x^k\rVert^2 +\frac{M_f}{\mu+1}\|x^{k+1}-x^k\rVert^{\mu+1}$ (see \eqref{Definition_zeta}). Now, for $k\in {\frak T}$, $\zeta_k > 0$ and hence $\|x^{k+1} -x^k\|\neq 0$. Solving  $\zeta_k > 0$ for $\|x^{k+1} -x^k\|$, we deduce that
    \begin{align}\label{normx}
        \|x^{k+1}-x^k\rVert \leq \frac{1}{k}\left(\frac{2M_f}{(1+\mu)\widetilde{H}_0}\right)^{1/(1-\mu)}\ \ \ \ \ \ \forall k\in {\frak T}.
    \end{align}
    Then we have for all $t\ge 2$ that
    \begin{align}\label{sum_zeta}
      &\frac{1}{t(t+1)}\sum\limits_{k=1}^{t-1}(k+1)(k+2)\zeta_k \overset{(a)}\le \frac{1}{t(t+1)}\sum\limits_{k\in {\frak T}}(k+1)(k+2)\zeta_k   \nonumber \\
      \overset{(b)}=&   \frac{1}{t(t+1)}\sum\limits_{k\in {\frak T}}(k+1)(k+2)\left( -\frac{H_k}{2}\|x^{k+1}-x^k\rVert^2
      +\frac{M_f}{\mu+1}\|x^{k+1}-x^k\rVert^{\mu+1} \right)  \nonumber \\
\leq & \frac{1}{t(t+1)}\sum\limits_{k\in {\frak T}}(k+1)(2k+2)\frac{M_f}{\mu+1}\|x^{k+1}-x^k\rVert^{\mu+1} \nonumber \\
      \overset{(c)}\leq & \frac{1}{t(t+1)}\sum\limits_{k\in {\frak T}}(4k^2)\widetilde{H}_0\frac{2M_f}{(\mu+1)\widetilde{H}_0} \left(\frac{2M_f}{(1+\mu)\widetilde{H}_0}\right)^{\frac{1 +\mu}{1-\mu}} \frac{1}{k^{1+\mu}} \nonumber \\
      \overset{(d)}\leq & \frac{\omega_0}{t(t+1)}(t-1)t^{1-\mu}
 \leq  \frac{\omega_0}{t^{\mu}},
    \end{align}
    where (a) holds because $\zeta_k \le 0$ when $k\notin {\frak T}$, (b) follows from \eqref{Definition_zeta}, (c) follows from \eqref{normx} and the fact that $(k+1)^2 \le 4k^2$, and (d) holds thanks to the definition of $\omega_0$ in \eqref{w4w0H0} and the facts that $|{\frak T}|\le t-1$ and $k\mapsto k^{1-\mu}$ is increasing.
    Combining \eqref{sum_zeta} with \eqref{f+g+h-f*-g*_holder}, we obtain that
     \begin{align}
        &f(x^t)+g(y^t)+\beta_{t-1}h(x^t,y^t)-f(x^*)-g(y^*) \nonumber \\
        &\leq  \frac{\omega_1}{t(t+1)}+\frac{\omega_2}{(t+1)^{1-\delta}}
         +\frac{\omega_3}{(t+1)^{\nu}}+2D_f^2\widetilde{H}_0\frac{1}{(t+1)^{\mu}}+ \frac{\omega_0}{t^{\mu}},  \nonumber \\
        &=  \frac{\omega_1}{t(t+1)}+\frac{\omega_2}{(t+1)^{1-\delta}}
         +\frac{\omega_3}{(t+1)^{\nu}}+2D_f^2\widetilde{H}_0\frac{1}{(t+1)^{\mu}}+  \omega_0\left( \frac{t+1}{t} \right)^{\mu}\frac{1}{(t+1)^{\mu}}, \nonumber \\
        &\leq  \frac{\omega_1}{t(t+1)}+\frac{\omega_2}{(t+1)^{1-\delta}}
        +\frac{\omega_3}{(t+1)^{\nu}}+(2D_f^2\widetilde{H}_0+ 2\omega_0) \frac{1}{(t+1)^{\mu}}, \nonumber
    \end{align}
    where the last inequality holds because $\left( \frac{t+1}{t} \right)^{\mu} \leq 2^{\mu} \leq 2$. The desired conclusion follows upon recalling the definitions of  $\omega_4$ in  \eqref{w3w4}.

    \textbf{Case 2:} $\mu =1$.  By  \eqref{definition_Ht}, we have $H_t = \max\{H_0,M_f\} \geq M_f$ for all $t\ge 1$. Then, for all $t\ge 1$, we have
    \[
    \zeta_t =\! -\frac{H_t}{2}\|x^{t+1}-x^t\rVert^2 +\frac{M_f}{\mu+1}\|x^{t+1}-x^t\rVert^{\mu+1} \!\leq\!-\frac{M_f}{2}\|x^{t+1}-x^t\rVert^2 +\frac{M_f}{2}\|x^{t+1}-x^t\rVert^{2}\!=0.
    \]
    Using this together with \eqref{Gamma_t}, we have for all $t\ge 2$ that
     \begin{align}
        &f(x^t)+g(y^t)+\beta_{t-1}h(x^t, y^t)-f(x^*)-g(y^*) \nonumber \\
        &\leq  \frac{\omega_1}{t(t+1)}+\frac{\omega_2}{(t+1)^{1-\delta}} +\frac{\omega_3}{(t+1)^{\nu}} +\frac{\omega_5}{t+1}.\nonumber \label{conclusion 2}
    \end{align}
    The desired results hold upon recalling the definitions of  $\omega_5$ and $\widetilde H_0$ in  \eqref{w3w4} and \eqref{w4w0H0}.
\end{proof}
\end{color}

\bibliography{sample}

\end{document}